\theoremstyle{plain}
\newtheorem{theorem}[subsection]{Theorem }
\newtheorem{definition}[subsection]{Definition }
\newtheorem{Pro}[subsection]{Proposition }
\newtheorem{remark}[subsection]{Remark}
\newtheorem{Cor}[subsection]{Corollary}
\newtheorem{corollary}[subsection]{Corollary}
\newtheorem{example}[subsection]{Example}
\newtheorem{lemma}[subsection]{Lemma}
\newtheorem{qu}[subsection]{Question}
\newtheorem{notation}[subsection]{Notation}
\newtheorem{conj}[subsection]{Conjecture}
 \numberwithin{equation}{section}
  \def\NB{{ }}
\def\Z{{\mathbb Z}}
\def\N{{\mathbb N}}
\def\R{{\mathbb R}}
\def\C{{\mathbb C}}
\def\S{{\mathbb S}}
\def\(SB){{\mathfrak B}}
\def\cont{\mathfrak c}
\begin{document}

\author{Dikran Dikranjan}
\address{Department of Mathematics and Computer Science, University of Udine, Via delle Scienze, 208-Loc. Rizzi, 33100 Udine, Italy}
\email{dikranja@dimi.uniud.it}

\author{Elena Mart\'in-Peinador}
\address{Departamento de Geometr\'{\i}a y Topolog\'{\i}a, Universidad Complutense de Madrid,
28040 Madrid, Spain}
\email{em\_peinador@mat.ucm.es}

\author{Vaja Tarieladze}
\address{Niko Muskhelishvili Institute of Computational Mathematics, 0171 Tbilisi, Georgia}
\email{vajatarieladze@yahoo.com}
\title{A class of metrizable locally quasi-convex groups which are not Mackey}

\thanks{The authors acknowledge support of the MICINN  of Spain through the grants MTM2009-14409-C02-01  for the first author and     MTM2009-14409-C02-02 (subprograma MTM)for  the second and third ones; V.Tarieladze was  also  partially supported  by
  GNSF/ST08/3-384 and GNSF/ST09/3-120.
The three authors also acknowledge to the IMI (Institute for Interdisciplinary Mathematics) for supporting several visits of Dikranjan and Tarieladze to the Complutense University of Madrid, which have allowed to carry out this joint research.}

\subjclass[2000]{Primary 54C40, 14E20; Secondary 46E25, 20C20}
\keywords{Mackey group, Mackey topology, precompact group, locally quasi-convex group, groups of null  sequences, $c_0(X)$, summable sequence.}

\begin{abstract}  A topological group $(G,\mu)$ from a class  $\mathcal G$ of MAP topological abelian groups  will be called a {\it Mackey group} in $\mathcal G$
if it has the following property: if  $\nu$ is a  group topology in
$G$ such that $(G,\nu)\in \mathcal G$ and $(G,\nu)$ has  the same
continuous characters, say $(G,\nu)^{\wedge}=(G,\mu)^{\wedge}$, then
$\nu\le \mu$.

 If $\rm{LCS}$ is the class of Hausdorff topological abelian groups which admit a structure of a locally convex topological vector space over $\mathbb R$, it
is well-known that every metrizable  $(G,\mu) \in \rm{LCS}$ is a Mackey group in $\rm{LCS}$. For the class $\rm{LQC}$ of locally quasi-convex Hausdorff topological abelian groups,  it
 was proved in 1999
  that every {\bf complete} metrizable  $(G,\mu)\in \rm{LQC}$ is a Mackey group in $\rm{LQC}$ (\cite{CMPT}). The completeness cannot be \NB dropped within the class  $\rm{LQC}$ as we prove in this paper. In fact,
we provide  a  large family  of metrizable precompact \NB(noncompact)  groups which {\bf are not}  Mackey groups in \rm{LQC} (Theorem
\ref{basth}). Those examples  are constructed from groups of the form $c_0(X)$, whose elements are the null sequences of a
topological abelian group $X$, and whose topology is the uniform topology.
We first show that for a compact metrizable group $X\ne\{0\}$ the topological group $c_0(X)$ is a non-compact complete metrizable locally
quasi-convex group,  which has {\bf countable} topological dual iff $X$ is connected. Then we prove that for a connected
compact metrizable group $X\ne\{0\}$ the group $c_0(X)$ endowed with
 the product topology induced from the product $X^{\N}$ is metrizable precompact but not a Mackey group in  LQC.

\end{abstract}
\maketitle
\

\section{Introduction}
For a locally convex space $E$ there always exist a finest topology in the class of locally convex topologies giving rise to the same dual space $E^*$.
This topology was introduced  by Mackey and it was named after him.

A similar setting, as we describe below, can be considered for locally quasi-convex groups, a class which properly contains that of locally convex Hausdorff spaces. However the good results known to hold for the class of locally convex spces are no longer valid in this new class.

In this paper we change the point of view:  starting from sufficiently large classes of abelian groups, we define the corresponding  Mackey groups.
As will be seen, the theory is much reacher and properties like connectedness, local compactness, etc.
 play  an important role, which obviously was not the case  in the framework of locally convex spaces.

Let   $X,Y$ be groups.  We denote by $Hom(X,Y)$  the set of all group homomorphisms from
$X$ to $Y$. If $X,Y$ are topological groups,  $CHom(X,Y) $ stands for the continuous elements  of $Hom(X,Y)$.
\par
A set $\Gamma\subset Hom(X,Y)$ will be called separating, if
$$
(x_1,x_2)\in X\times X,\,x_1\ne x_2\,\Longrightarrow \exists \gamma\in \Gamma,\,\gamma(x_1)\ne\gamma (x_2)\,.
$$
For a  topological group $X$, a Hausdorff  group $Y$ and a non-empty $\Gamma\subset Hom(X,Y)$ we denote by $\sigma(X,\Gamma)$ the coarsest topology in $X$ with respect to which all members of $\Gamma$ are continuous. Note that $\sigma(X,\Gamma)$ is a group topology in $X$; moreover,
% if $Y$ is Hausdorff, then the topology
$\sigma(X,\Gamma)$ is Hausdorff iff $\Gamma$ is separating.
\par
The set
$$
\mathbb S:=\{s\in \mathbb C:|s|=1\}\,.
$$
is an abelian group with respect to multiplication of complex numbers; it is endowed with the usual topology induced from $\C$.
\par
From now on all considered groups will be abelian.
\par
For a group $G$ an element of $Hom(G,\mathbb S)$ is called a (multiplicative) {\it character}.

Let $G$ be  a topological  group.\\
 We write:
$$
G^\wedge:=CHom(G,\mathbb S)\,.
$$
An element of $G^\wedge$ is called {\it a continuous character}. Always $1\in G^\wedge$, where $1(x)=1\in \mathbb S,\,\,\forall x\in G$. The set  $G^\wedge$ with respect to pointwise multiplication of characters is  an abelian group with a  neutral element $1$.\\
The group $G^\wedge $ is called  {\it the topological  dual} of $G$.
{\it We shall not fix in advance a topology in}
$G^\wedge$.
%\end{notation}

A topological group $G$ is called {\it maximally almost periodic}, for short a MAP-group, if $G^\wedge $ is separating.
We denote by $\rm{MAP}$ also the class of all   MAP-groups.

For a (not necessarily discrete) topological group $G$ the topology $\sigma(G,G^{\wedge})$ is called {\it the Bohr topology} of $G$.
\par
For a group $G$ and for a group topology $\tau$ in $G$ we write $\tau^+$ for the Bohr topology of $(G,\tau)$. Clearly $\tau^+\le \tau$ and $\tau^+$
is a Hausdorff topology iff $G$ is MAP.

\begin{Pro}\label{coro}{\em (\cite{CoRo64}; cf. also \cite[Theorem 2.3.4]{DPS} and \cite{V})} For a  MAP-group $(G,\tau)$ the following statements hold:
\par
$(a)$ $\tau^+$ is a precompact group topology.
\par
$(b)$  $(G,\tau)^{\wedge}=(G,\tau^+)^{\wedge}$.
\par
$(c)$  $(G,\tau)$ is precompact iff  $\tau=\tau^+$.
\end{Pro}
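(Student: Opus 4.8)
The plan is to read off all three statements from the description $\tau^{+}=\sigma(G,G^{\wedge})$, using exactly one nontrivial external input, the Peter--Weyl theorem (equivalently, the fact that a compact abelian group has a point-separating character group). Statement $(b)$ is purely formal: $\sigma(G,G^{\wedge})$ is by definition the coarsest group topology on $G$ making every $\chi\in G^{\wedge}$ continuous, so $\tau^{+}\le\tau$ (as already noted) gives $(G,\tau^{+})^{\wedge}\subseteq(G,\tau)^{\wedge}=G^{\wedge}$, while every $\chi\in G^{\wedge}$ is $\tau^{+}$-continuous by construction, giving $G^{\wedge}\subseteq(G,\tau^{+})^{\wedge}$; hence equality. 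For $(a)$ I would consider the evaluation homomorphism $e\colon G\to\mathbb{S}^{G^{\wedge}}$, $e(x)=(\chi(x))_{\chi\in G^{\wedge}}$. Since $G$ is MAP, $G^{\wedge}$ separates points, so $e$ is injective, and by the definition of an initial topology $e$ embeds $(G,\tau^{+})$ topologically into $\mathbb{S}^{G^{\wedge}}$ with its product topology; the latter is compact by Tychonoff's theorem, so $\tau^{+}$ is a (Hausdorff) precompact group topology.

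For $(c)$, the implication $\tau=\tau^{+}\Rightarrow(G,\tau)$ precompact is immediate from $(a)$. For the converse, assume $(G,\tau)$ precompact; being MAP it is Hausdorff, so it is a dense subgroup of its completion $K$, a compact Hausdorff abelian group. Each $\chi\in G^{\wedge}$ is uniformly continuous and $\mathbb{S}$ is complete, so $\chi$ extends uniquely to a continuous character of $K$; restriction to the dense subgroup $G$ is injective, so restriction $K^{\wedge}\to G^{\wedge}$ is a bijection. By transitivity of initial topologies this identifies $\tau^{+}=\sigma(G,G^{\wedge})$ with the trace on $G$ of $\sigma(K,K^{\wedge})$. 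Now Peter--Weyl says $K^{\wedge}$ separates the points of $K$, so $\sigma(K,K^{\wedge})$ is a Hausdorff group topology coarser than the compact topology of $K$; a continuous bijection of compact Hausdorff spaces is a homeomorphism, hence $\sigma(K,K^{\wedge})$ is the topology of $K$ itself. Taking traces on $G$ yields $\tau^{+}=\tau$.

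The only real obstacle is the appeal to Peter--Weyl (equivalently, that compact abelian groups are MAP); it is what forces the topology of a precompact group to be reconstructible from its continuous characters, and hence it is the genuine content behind the ``only if'' part of $(c)$. Everything else amounts to bookkeeping with initial topologies, the evaluation map, and the uniqueness of extensions of continuous homomorphisms to the completion. One should only be mildly careful about the convention on the word \emph{precompact} (whether Hausdorffness is built into it), but since being MAP already forces $\tau\ge\tau^{+}$ to be Hausdorff, this point is harmless here.
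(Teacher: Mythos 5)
Your proof is correct; the paper itself gives no argument for this proposition but merely cites Comfort--Ross, \cite[Theorem 2.3.4]{DPS} and Varopoulos, and your derivation (evaluation into $\S^{G^{\wedge}}$ for (a), the two trivial inclusions for (b), and passage to the compact completion plus Peter--Weyl and the continuous-bijection-of-compacta argument for (c)) is exactly the standard proof found in those sources. The only remark worth adding is that (b) is trivial here precisely because the defining family is the \emph{full} dual $(G,\tau)^{\wedge}$; the genuinely nontrivial Comfort--Ross statement concerns $\sigma(G,H)$ for a proper point-separating subgroup $H$ of the character group, which is not needed for this proposition.
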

\begin{definition}\label{cto}
Let $(G,\tau)$ be a topological group. A group topology $\eta$ in $G$ is said to be compatible for $(G,\tau)$ if $(G,\tau)^{\wedge}=(G,\eta)^{\wedge}$.
\end{definition}

The concept of {\it  compatible group topology } is due to Varopoulos \cite{V}.
 This remarkable paper offers, among other results, a description of {\it locally precompact} group topologies compatible with a MAP-group $(G,\tau)$.
  It is also proved there that  every  MAP-group $(G,\tau)$ admits  {\it at most one locally compact group topology in $G$ compatible with } $(G,\tau)$ \cite[p. 485]{V}. It is natural to consider the maximum (provided it exists) of all compatible topologies for  $(G,\tau)$, a  topological group in a certain class of groups. The following definition is in the spirit of \cite{BaKl}, where a categorical treatment  of Mackey groups is given.

\begin{definition}\label{def1} Let $\mathcal G$ be a class of  MAP-groups.
A topological group $(G,\mu)$  will be called a  {\it Mackey group} in $\mathcal G$ if $(G,\mu)\in \mathcal G$ and
if $\nu$ is a compatible group topology for $(G,\mu)$, with $(G,\nu)\in \mathcal G$, then $\nu\le \mu$.
 \end{definition}

\begin{definition}
Let $\mathcal G$ be a  class of MAP-groups and $(G,\nu)\in \mathcal G$. If there exists a group topology $\mu$ in $G$ compatible for
$(G,\nu)$ such that $(G,\mu)\in \mathcal G$  and  $(G,\mu)$ is  a Mackey group in $\mathcal G$, then $\mu$ is called the $\mathcal G$-Mackey topology in $G$ associated with $\nu$.
 \end{definition}

In this paper we will not discuss the problem of existence of $\mathcal G$-Mackey topologies (cf. \cite{DLMT}).
\par
Let $\rm{LPC}$ be the class of Hausdorff locally precompact topological abelian groups.
\par
A precompact $(G,\tau)\in \rm{LPC}$ may not be a Mackey group in $\rm{LPC}$ (for instance, if  $(G,\tau)$ is an infinite discrete group, then $(G,\tau^+)\in \rm{LPC}$ is not a Mackey group in
 $\rm{LPC}$. Clearly,  $(G,\tau)\in \rm{LPC}$, $\tau$ is compatible for $(G,\tau^+)$, but $\tau$ is strictly finer than $\tau^+$). However, the metrizability changes the picture as the following statement shows.

\begin{theorem}\label{varco}{\em (cf. \cite[Corollary 2 (p.484)]{V})}
Every metrizable  $(G,\mu)\in \rm{LPC}$ is a Mackey group in $\rm{LPC}$.
\end{theorem}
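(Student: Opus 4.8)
The plan is to reduce, via the supremum of the two topologies, to the case where the rival topology refines $\mu$, and then to derive the conclusion from Pontryagin duality combined with the open mapping theorem for $\sigma$-compact locally compact groups. Throughout I write $\widetilde G_\mu,\widetilde G_\nu$ for the group completions (which are locally compact abelian, since a locally precompact Hausdorff abelian group has a locally compact abelian completion), and I set $H:=(G,\mu)^\wedge$.

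First I would carry out the reduction. Let $\nu$ be any group topology on $G$ with $(G,\nu)\in\rm{LPC}$ and $(G,\nu)^\wedge=H$, and set $\sigma:=\mu\vee\nu$. A routine covering argument shows that if $V$ and $W$ are, respectively, $\mu$- and $\nu$-precompact neighbourhoods of $0$, then $V\cap W$ is $\sigma$-precompact, so $(G,\sigma)\in\rm{LPC}$; and $\sigma$ is again compatible, because the completion of $(G,\sigma)$ sits, as the closure of the diagonal copy of $G$, inside the locally compact group $\widetilde G_\mu\times\widetilde G_\nu$, so every $\sigma$-continuous character of $G$ is a product $\chi_1\chi_2$ with each $\chi_i$ extending to one of the two completions, whence $\chi_i\in H$ and $\chi_1\chi_2\in H\cdot H=H$. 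It is therefore enough to prove the following: \emph{if $\nu\ge\mu$ is compatible with $(G,\mu)$ and $(G,\nu)\in\rm{LPC}$, then $\nu=\mu$}; applied to $\sigma$, this yields $\nu\le\sigma=\mu$ for the original $\nu$.

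So assume $\nu\ge\mu$. The identity $(G,\nu)\to(G,\mu)$ extends to a continuous homomorphism $\pi\colon\widetilde G_\nu\to\widetilde G_\mu$ which is the identity on the common dense subgroup $G$. Restriction to $G$ identifies both $(\widetilde G_\mu)^\wedge$ and $(\widetilde G_\nu)^\wedge$ with $H$, and under these identifications the dual map $\pi^\wedge$ is simply $\mathrm{id}_H$: if $\chi$ is the extension of some $\chi_0\in H$ to $\widetilde G_\mu$, then $\chi\circ\pi$ is the extension of the same $\chi_0$ to $\widetilde G_\nu$. In particular $\pi^\wedge\colon(\widetilde G_\mu)^\wedge\to(\widetilde G_\nu)^\wedge$ is a continuous bijective homomorphism of locally compact abelian groups. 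This is where metrizability enters: since $\widetilde G_\mu$ is metrizable it has a countable base $(U_n)$ of neighbourhoods of $0$, and the polars $U_n^{\circ}\subseteq(\widetilde G_\mu)^\wedge$ are equicontinuous and pointwise bounded, hence relatively compact for the compact-open topology, and they cover $(\widetilde G_\mu)^\wedge$; thus $(\widetilde G_\mu)^\wedge$ is $\sigma$-compact. Now the open mapping theorem --- a continuous surjective homomorphism from a $\sigma$-compact locally compact group onto a locally compact Hausdorff group is open --- forces $\pi^\wedge$ to be a topological isomorphism, and hence so is $\pi$, because Pontryagin duality is an anti-equivalence of the category of locally compact abelian groups and therefore reflects isomorphisms. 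Since $\pi$ restricts to the identity on $G$, this means $\mu=\nu$, as required.

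The steps that I expect to need the most care are the two ``bookkeeping'' points --- verifying that $\mu\vee\nu$ is compatible (which legitimizes the reduction) and that $\pi^\wedge$ is literally the identity map of $H$ (so that it is bijective and the open mapping theorem applies) --- together with the classical duality fact invoked to pass from metrizability of $\widetilde G_\mu$ to $\sigma$-compactness of its character group. Everything else is formal. It is worth noting that only the metrizability of $\mu$ is used, not that of $\nu$, and that the whole argument can be read as the metrizable refinement of the Varopoulos uniqueness statement quoted just above.
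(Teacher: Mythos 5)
Your proof is correct, and it is worth noting at the outset that the paper itself offers no proof of this statement: Theorem \ref{varco} is quoted from Varopoulos (\cite[Corollary 2 (p.~484)]{V}), so there is no in-text argument to compare against line by line. Your argument is a clean, self-contained derivation, and each of the ingredients you flag checks out. The reduction to $\nu\ge\mu$ via $\sigma=\mu\vee\nu$ works: $(G,\sigma)$ is locally precompact (most transparently because $(G,\sigma)$ is the diagonal copy of $G$ in $(G,\mu)\times(G,\nu)$ and $V\cap W$ sits inside the precompact set $V\times W$), and $\sigma$ is compatible because characters of the closure of the diagonal in $\widetilde G_\mu\times\widetilde G_\nu$ extend to the LCA product and restrict on $G$ to products of elements of $H$. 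The identification of $\pi^\wedge$ with $\mathrm{id}_H$ uses only that restriction to a dense subgroup is an (algebraic) isomorphism on duals, which is standard; the $\sigma$-compactness of $(\widetilde G_\mu)^\wedge$ from metrizability of $\widetilde G_\mu$ is exactly the covering by the compact polars $U_n^{\triangleright}$ (compare the paper's own use of compactness of polars in Proposition \ref{2oct}); and the open mapping theorem for $\sigma$-compact locally compact groups together with the fact that Pontryagin duality reflects topological isomorphisms closes the argument. One small presentational caveat: where you say the covering argument for precompactness of $V\cap W$ is ``routine,'' it is --- but the diagonal-embedding formulation makes it one line and simultaneously sets up the compatibility argument, so you may as well use it for both. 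Your closing observation that only metrizability of $\mu$ is needed is also accurate and is precisely what makes the statement a uniqueness theorem for the metrizable locally precompact topology within its compatibility class.
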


Next we consider  another class of groups in which the metrizable groups  are again Mackey.
\par
We say that a {\it topological} group $G$ {\it admits a structure of a (locally convex) topological vector space} over $\mathbb R$ if  there exists a map $\mathbb R\times G\to G$
making $G$ a  (locally convex) topological vector space over $\mathbb R$. It is known   that whenever  a Hausdorff topological
abelian group admits a topological vector space structure over $\mathbb R$, it must be unique.
\par
 Let $\rm{LCS}$ be the class of Hausdorff topological abelian groups which admit a structure of {\it locally convex} topological vector space over
$\mathbb R$. It is an important consequence of Hahn-Banach theorem that $\rm{LCS}\subset \rm{MAP}$. The next theorem is proved  in Section 2.

\begin{theorem}\label{mtv} Every metrizable  $(G,\mu)\in \rm{LCS}$ is a Mackey group in $\rm{LCS}$.
\end{theorem}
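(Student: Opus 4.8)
The plan is to reduce Theorem \ref{mtv} to the classical fact that a metrizable locally convex space carries its Mackey topology, using the standard dictionary between the continuous characters and the continuous linear functionals of a real locally convex space. First, I record this dictionary. Let $\sigma$ be a locally convex topology on $G$ and let $E$ denote $G$ with the (unique) $\R$-module structure making it a locally convex space, with topological dual $E^{*}$. Then $f\mapsto\chi_{f}$, where $\chi_{f}(x):=\exp(2\pi i f(x))$, is a group isomorphism of $E^{*}$ onto $(G,\sigma)^{\wedge}$. It is an injective homomorphism into $(G,\sigma)^{\wedge}$; for surjectivity, given $\chi\in(G,\sigma)^{\wedge}$ pick a convex symmetric $0$-neighbourhood $U$ with $\chi(U)$ in the open right half of $\mathbb S$, lift $\chi|_{U}$ through $\exp(2\pi i\,\cdot)$ to a continuous $f_{0}\colon U\to(-\tfrac14,\tfrac14)$ which is additive where this makes sense in $U$, and extend $f_{0}$ to an additive $f\colon G\to\R$ by $f(x):=n f_{0}(x/n)$ for $n$ large (possible since $G$ is a $\Q$-vector space); then $f$ is $\Q$-linear, continuous (a homomorphism agreeing with $f_{0}$ near $0$), hence $\R$-linear, and $\chi_{f}=\chi$. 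Uniqueness of $f$ as a group homomorphism follows from $G$ being divisible, so that $Hom(G,\Z)=0$. This is the observation behind $\rm{LCS}\subset\rm{MAP}$ already mentioned above.

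Now let $\nu$ be a group topology compatible for $(G,\mu)$ with $(G,\nu)\in\rm{LCS}$. By the quoted uniqueness of the topological $\R$-vector-space structure, $(G,\mu)$ and $(G,\nu)$ carry unique scalar multiplications $\cdot_{\mu}$, $\cdot_{\nu}$, and on $\Q\times G$ both agree with the purely algebraic division available in the torsion-free divisible group $G$. The key point is that $r\cdot_{\mu}x=r\cdot_{\nu}x$ for all $r\in\R$, $x\in G$. Indeed, choose rationals $q_{k}\to r$; continuity of the two scalar multiplications gives $q_{k}x\to r\cdot_{\mu}x$ in $\mu$ and $q_{k}x\to r\cdot_{\nu}x$ in $\nu$. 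For $\chi\in(G,\mu)^{\wedge}=(G,\nu)^{\wedge}$ with lift $f$ as above (continuous for both $\mu$ and $\nu$, and $\Q$-linear) we get $f(r\cdot_{\mu}x)=\lim_{k}q_{k}f(x)=f(r\cdot_{\nu}x)$; since $(G,\mu)^{\wedge}$ separates points of $G$, so does the family of such lifts $f$, whence $r\cdot_{\mu}x=r\cdot_{\nu}x$. Therefore $\mu$ and $\nu$ are locally convex topologies on one and the same real vector space $V$, and by the dictionary they have the same topological dual: $(V,\mu)^{*}\cong(G,\mu)^{\wedge}=(G,\nu)^{\wedge}\cong(V,\nu)^{*}=:V^{*}\subseteq Hom(V,\R)$.

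It remains the purely locally convex statement that a metrizable locally convex space is a Mackey space. By Mackey's theorem on bounded sets, every locally convex topology on $V$ with dual $V^{*}$ has the $\sigma(V,V^{*})$-bounded sets as its bounded sets; in particular $\mu$ and $\nu$ have the same bounded subsets of $V$. On the other hand $(V,\mu)$ is metrizable, hence bornological: if $x_{n}\to 0$ in $\mu$, there are scalars $\lambda_{n}\to\infty$ with $\lambda_{n}x_{n}\to 0$ in $\mu$ (a standard consequence of having a countable decreasing base of absolutely convex $0$-neighbourhoods), and a short scaling argument then shows that every linear map from $(V,\mu)$ to a locally convex space carrying bounded sets to bounded sets is sequentially continuous at $0$, hence continuous. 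Applying this to $\mathrm{id}\colon(V,\mu)\to(V,\nu)$, which is bounded by the previous sentence, yields $\nu\le\mu$; thus $(G,\mu)$ is a Mackey group in $\rm{LCS}$. The main obstacle is the middle step: a priori $\mu$ and $\nu$ are incomparable, so one must rule out that the nets $q_{k}x$ have different limits in the two topologies, and this is exactly what the common separating dual forces; once the real scalar multiplications are identified, the remainder is the classical ``metrizable $\Rightarrow$ Mackey'' fact, and the only other care needed is in setting up the character–functional dictionary underlying the identification of duals.
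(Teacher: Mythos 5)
Your proof is correct and follows the same overall strategy as the paper's: translate continuous characters into continuous $\R$-linear functionals via $f\mapsto \exp(2\pi i f(\cdot))$, conclude that compatible locally convex topologies have the same topological dual as locally convex spaces, and then invoke the classical fact that a metrizable locally convex topology is the finest locally convex topology with its dual. Two differences are worth noting. First, you explicitly verify that the two scalar multiplications $\cdot_{\mu}$ and $\cdot_{\nu}$ coincide before comparing $\mu$ and $\nu$ as locally convex topologies on the \emph{same} real vector space; the paper passes over this point silently (its quoted uniqueness of the topological vector space structure is uniqueness for a fixed topology, so a priori the two structures could differ), and your argument via rational approximation and the common separating dual is exactly the right way to close that small gap. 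Second, for the final step the paper simply cites Schaefer IV.3.4, whereas you reprove the ``metrizable implies Mackey'' fact through Mackey's theorem on bounded sets together with the bornologicity of metrizable locally convex spaces and the continuity of bounded linear maps out of a bornological space; this is a standard alternative route and is carried out correctly, at the cost of being longer than a direct appeal to the Mackey--Arens machinery.
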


%
%\begin{remark}{\em The Mackey-Arens Theorem is an important tool in the theory of locally convex spaces. In order to formulate it for groups, observe that if $G\in \rm{LCS}$, $G^{\wedge}$ can be viewed as a vector space over $\R$ and so the notion of a convex subset of $G^{\wedge}$ has its usual sense. Write also $\sigma(G^{\wedge},G)$ for the topology in $G^{\wedge}$ induced from the cartesian product ${\S}^G$.\\
%{\it The Mackey-Arens theorem.} $G\in \rm{LCS}$ is a Mackey group in  $\rm{LCS}$ iff every $\sigma(G^{\wedge},G)$-compact
%{\it convex} subset of $G^{\wedge}$ is equicontinuous on $G$ \cite{Schae}.
%}
%\end{remark}
 \begin{remark}\label{kakol} {\em Let $\rm{MAPVS}$ be the class of  MAP-groups which admit a structure of  topological vector space over $\mathbb R$.
 It is known that $\rm{LCS}\subset \rm{MAPVS}$ and that this inclusion is strict.
 An analogue of Theorem \ref{mtv} fails for $\rm{MAPVS}$: {\it A metrizable  $(G,\mu)\in \rm{MAPVS}$ is a Mackey group
 in $\rm{MAPVS}$ iff $(G,\mu)$ is locally compact} (cf. \cite[Proposition 2.1]{CMPT}).}
 \end{remark}

  A natural class of groups containing    $\rm{LPC}\cup\rm{LCS}$ is provided by $\rm{LQC}$,  the class  of locally quasi-convex Hausdorff  groups (see Definition \ref{qcx}).  In fact, it is known that
 \begin{equation}\label{29seq}
 \rm{LPC}\cup\rm{LCS}\subset \rm{LQC}\subset
 \rm{MAP}
 \end{equation}
 where the inclusions are strict (cf. \cite{A, B, CMPT}).
It was proved in \cite {CMPT} that every {\bf Cech-complete}   $(G,\mu)\in \rm{LQC}$ is a Mackey group in $\rm{LQC}$.
In particular, every locally compact $(G,\mu)\in \rm{LQC}$ is a Mackey group in $\rm{LQC}$.
 We prefer to isolate here another particular case more relevant for the specific purposes of this paper:

\begin{theorem}\label{23seq}{\em (\cite {CMPT})}
Every {\bf complete metrizable }  $(G,\mu)\in \rm{LQC}$ is a Mackey group in $\rm{LQC}$.
\end{theorem}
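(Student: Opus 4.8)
The plan is to recast the Mackey property as a statement about equicontinuous sets of characters, and then to quote a Banach--Steinhaus type theorem which is available because a complete metrizable group is a Baire space.

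First I would reduce to the following assertion: \emph{every $\nu$-equicontinuous subset of $G^{\wedge}$ is $\mu$-equicontinuous}, for every group topology $\nu$ compatible with $(G,\mu)$ and with $(G,\nu)\in\mathrm{LQC}$. Fix such a $\nu$; then $(G,\nu)^{\wedge}=(G,\mu)^{\wedge}=G^{\wedge}$. Since $(G,\nu)$ is locally quasi-convex it has a base of neighbourhoods of $0$ made of quasi-convex sets $V$, and for such a $V$ one has $V=(V^{\triangleright})^{\triangleleft}$, where $V^{\triangleright}=\{\chi\in G^{\wedge}:\operatorname{Re}\chi(x)\ge0\ \text{for all}\ x\in V\}$ is the polar of $V$ and $N^{\triangleleft}=\{x\in G:\operatorname{Re}\chi(x)\ge0\ \text{for all}\ \chi\in N\}$ for $N\subseteq G^{\wedge}$; moreover $V^{\triangleright}$ is $\nu$-equicontinuous, and a subset $N$ of $G^{\wedge}$ is $\mu$-equicontinuous precisely when $N^{\triangleleft}$ is a $\mu$-neighbourhood of $0$ (and similarly for $\nu$). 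Granting the assertion, $V^{\triangleright}$ is $\mu$-equicontinuous, hence $V=(V^{\triangleright})^{\triangleleft}$ is a $\mu$-neighbourhood of $0$; as this holds for a base of $\nu$-neighbourhoods of $0$, we obtain $\nu\le\mu$, which is what Definition \ref{def1} requires.

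To prove the assertion, take a $\nu$-equicontinuous $N\subseteq G^{\wedge}$, so that $N\subseteq W^{\triangleright}$ for some $\nu$-neighbourhood $W$ of $0$. Here I would invoke two soft facts. The group form of the Alaoglu--Bourbaki theorem gives that $W^{\triangleright}$ is closed in $\mathbb S^{G}$ and contained in $G^{\wedge}$, hence $\sigma(G^{\wedge},G)$-compact; and the $\sigma(G^{\wedge},G)$-closure in $\mathbb S^{G}$ of a $\nu$-equicontinuous set is again $\nu$-equicontinuous, so it consists of $\nu$-continuous characters and thus lies in $(G,\nu)^{\wedge}=G^{\wedge}$. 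Consequently the $\sigma(G^{\wedge},G)$-closure $K$ of $N$ is a $\sigma(G^{\wedge},G)$-compact subset of $G^{\wedge}$ with $N\subseteq K$, and it suffices to show that $K$ is $\mu$-equicontinuous.

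This is where completeness enters, and it is the crux of the proof. The decisive ingredient is the group analogue of the Banach--Steinhaus theorem, established in \cite{CMPT}: a complete metrizable (indeed, any \v{C}ech-complete) locally quasi-convex abelian group $G$ has the property that every $\sigma(G^{\wedge},G)$-compact subset of $G^{\wedge}$ is equicontinuous (such $G$ are sometimes called $\mathfrak g$-barrelled). Applying this to $(G,\mu)$ and to $K$ shows that $K$, and therefore $N$, is $\mu$-equicontinuous; this proves the assertion and hence the theorem. I expect essentially all the difficulty to be concentrated in this Banach--Steinhaus principle: to show that $K^{\triangleleft}$ is a $\mu$-neighbourhood of $0$ one cannot, as in the locally convex case, simply combine an ``absorption'' property with the Baire category theorem, because quasi-convex sets are not absorbing in any scalar sense; instead one needs a more delicate Baire-category argument inside $(G,\mu)$ that genuinely exploits the compactness of $K$ together with the metrizability (countable neighbourhood base) of $(G,\mu)$. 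By contrast, the two preliminary reductions are formal, relying only on local quasi-convexity of $\mu$ and $\nu$ and on Alaoglu--Bourbaki, and not on completeness at all.
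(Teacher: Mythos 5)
The paper gives no proof of Theorem \ref{23seq}: it is quoted from \cite{CMPT}. Your outline correctly reproduces the architecture of the argument in that reference --- the formal bipolar reduction (a quasi-convex $\nu$-neighbourhood $V$ satisfies $V=(V^{\triangleright})^{\triangleleft}$, so $\nu\le\mu$ follows once $\nu$-equicontinuous sets of characters are shown to be $\mu$-equicontinuous) combined with the substantive ingredient, namely that a Baire (in particular complete metrizable, or \v{C}ech-complete) locally quasi-convex group is $\mathfrak g$-barrelled, i.e.\ every $\sigma(G^{\wedge},G)$-compact subset of $G^{\wedge}$ is equicontinuous --- and you correctly locate all the real difficulty in that Banach--Steinhaus-type principle, which is exactly where \cite{CMPT} places it.
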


%\begin{proof}As $(G,\mu)\in \rm{LCS}\Longrightarrow (G,\mu)\in \rm{LQC}$, Theorem \ref{megr} applies.\end{proof}

In view of Theorems \ref{varco},\ref{mtv}, and \ref{23seq}, the following question arises:

\begin{qu}\label{MainQuestion}
{\em  Is every  metrizable} $(G,\mu) \in \rm{LQC}$ {\em a Mackey group in $\rm{LQC}$ }?
\end{qu}

We provide a negative answer to this question, in   fact we have:

\begin{theorem}\label{elena2}
Let $(G,\tau)\in$$\rm{LQC}$ be a non-precompact group with {\bf countable} dual  $(G,\tau)^{\wedge}$.
\par
Then $(G,\tau^+)$ is  a metrizable precompact group which is not a Mackey group in $\rm{LQC}$.
\end{theorem}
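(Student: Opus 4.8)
The plan is to show that the natural (metrizable) topology $\tau$ itself is a compatible topology for $(G,\tau^+)$ that lies in $\rm{LQC}$ but is strictly finer than $\tau^+$, thus witnessing the failure of the Mackey property. First I would record that $(G,\tau^+)$ is metrizable: by Proposition \ref{coro}(a) the topology $\tau^+$ is precompact, and by Proposition \ref{coro}(b) its dual is $(G,\tau)^{\wedge}$, which is countable by hypothesis; a precompact group whose dual is countable is metrizable, since a neighbourhood base at $0$ for $\tau^+$ is given by the polars of finite subsets of $(G,\tau)^{\wedge}$, and a countable dual yields a countable such base. (One also checks $\tau^+$ is Hausdorff because $(G,\tau)\in\rm{MAP}$, as $\rm{LQC}\subset\rm{MAP}$ by \eqref{29seq}.) Precompactness of $(G,\tau^+)$ is immediate from Proposition \ref{coro}(a), or from \ref{coro}(c).

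Next I would exhibit the compatible refinement. Since $(G,\tau)\in\rm{LQC}$, the topology $\tau$ is locally quasi-convex and Hausdorff, so $(G,\tau)\in\rm{LQC}$; and $\tau$ is compatible for $(G,\tau^+)$ because by Proposition \ref{coro}(b) we have $(G,\tau^+)^{\wedge}=(G,\tau)^{\wedge}$, i.e. the two topologies have the same continuous characters. So if $(G,\tau^+)$ were a Mackey group in $\rm{LQC}$, Definition \ref{def1} would force $\tau\le\tau^+$. Combined with the always-valid inequality $\tau^+\le\tau$, this gives $\tau=\tau^+$, which by Proposition \ref{coro}(c) means $(G,\tau)$ is precompact — contradicting the hypothesis that $(G,\tau)$ is non-precompact. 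Hence $(G,\tau^+)$ is not a Mackey group in $\rm{LQC}$.

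The only genuinely non-formal step is the metrizability of $(G,\tau^+)$, i.e. the implication ``precompact group with countable dual $\Rightarrow$ metrizable''. The argument is that for a precompact group $H$, the sets $A^{\triangleright}=\{x\in H: \chi(x)\in\{s:\operatorname{Re}s\ge 0\}\text{ for all }\chi\in A\}$ with $A$ ranging over finite subsets of $H^{\wedge}$ form a neighbourhood base at $0$ (this is exactly the definition of the topology $\sigma(H,H^{\wedge})$ restricted to a precompact group, where $\tau=\tau^+$); when $H^{\wedge}$ is countable, enumerating it as $\{\chi_n\}_n$ and taking $A_n=\{\chi_1,\dots,\chi_n\}$ gives a countable base, so $H$ is first countable, hence (being a topological group) metrizable by the Birkhoff--Kakutani theorem. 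I would present this either inline or by citing the standard fact (e.g. from \cite{DPS}) that a precompact abelian group is metrizable precisely when its dual is countable. Everything else is a direct unwinding of the definitions of Mackey group, compatible topology, and the properties of $\tau^+$ recorded in Proposition \ref{coro}, so no further obstacle is expected.
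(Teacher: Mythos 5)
Your proposal is correct and follows essentially the same route as the paper: metrizability of $\tau^+$ from countability of the dual, compatibility and precompactness of $\tau^+$ from Proposition \ref{coro}, and the strict inequality $\tau^+<\tau$ (via Proposition \ref{coro}(c) and non-precompactness) as the witness that the Mackey property fails. The only detail worth adding is the explicit remark, present in the paper, that $(G,\tau^+)\in\rm{LQC}$ because precompact groups are locally quasi-convex, so that $(G,\tau^+)$ is genuinely a member of the class in which it fails to be Mackey.
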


 In order to use Theorem \ref{elena2} and produce a large scale of counter-example to Question \ref{MainQuestion}, we need a general construction of
 non-precompact locally quasi-convex groups with countable dual (to which the theorem can be applied).   This goal is achieved my means of a class of groups which roughly speaking are 'groups of sequences'.   Let us   denote by $c_0(X)$ the subgroup of $X^\N$ of all  null sequences of $X$.
  The following holds: \\

 \noindent {\bf Theorem.} {\em Let $X$ be an infinite  compact metrizable abelian group,
  $\mathfrak{u}_0$ the uniform topology induced from $X^\N$ on $c_0(X)$.
  Then  $G := (c_0(X), \mathfrak{u}_0)$ is  a nonprecompact locally quasi-convex Polish group. Further, the following assertions are equivalent:
\begin{itemize}
     \item[(i)]  $X$ is connected.
%\item[(ii)] The groups  $G^{\wedge}$ and $(X^{\N})^{\wedge}$ are  canonically  isomorphic.
   \item[(ii)]  $G^{\wedge}=(X^\wedge)^{(\N)}$.
     \item[(iii)]  $\mbox{{\rm Card} } G^{\wedge}=\aleph_0$.
     \item[(iv)]  $\mbox{{\rm Card} }G^{\wedge}< \cont$.
\end{itemize}
 }

To the proof of  this theorem (together with other more precise   results) are dedicated Sections 3--6.
The key point of the proof is the introduction of a class $\(SB)$ of groups $X$ (see Definition  \ref{SB}),
such that $c_0(X)^\wedge = (X^{\wedge})^{(\N)}$ for every precompact group $X\in \(SB)$ (Theorem \ref{NewTh}).
It turns out, that this new class $\(SB)$ can be described in some cases through well-known properties.
 Namely, in Corollary \ref{Coro_dikconj} (resp., Theorem \ref{dikTh}) we show that in the class of all locally compact (resp., metrizable) groups, $X \in \(SB)$ iff $X$ is connected (resp., locally generated in the sense of Enflo  \cite{Enf}, see Definition \ref{Def_loc_gen}).  This is the backbone of the proof of the theorem.

In the last section we offer some open questions and conjectures.

%\pagebreak

\subsection*{Notation and terminology} Let $\S$ denote the circle group and $\S_+=\{s\in \S:\rm{Re}(s)\ge 0\}\,.$
 $\cont$ will  stand for  the cardinality of continuum.

We denote  by $e$ the neutral element of a group. We also use the symbols  $0$ and  $1$ instead of $e$ if the group is known to be  additive or  multiplicative respectively.
 For a subset $A$ of a group $X$ denote by  $\langle A\rangle$ the subgroup of $X$
 generated by  $A$.

Let $X$ be a set. As usual, $X^\mathbb N$ will denote the set of all sequences ${\bf x}=(x_n)_{n\in \mathbb N}$ of elements of $X$ and $(p_n)_{n\in \N}$ the sequence of projections $X^\mathbb N\to X$.

For    a group $X$,   $X^{(\mathbb N)}$   will be  the subgroup of $X^\mathbb N$ consisting of all sequences   eventually equal to $e$. For $n\in \mathbb N$ define an injective homomorphism
$$
\nu_n: X \to  X^{(\mathbb N)}, \;\;  \mbox{  by } \;\; \nu_n(x)= (e,\dots,e, x,e,\dots),
$$
where $x\in X$ is placed in position $n$.\\
If $X$ is a topological group,  let

$$c_0(X):=\{ (x_n)_{n\in \mathbb N}\in  X^\mathbb N :\lim_nx_n=e\, \}.
$$

 Clearly $c_0(X)$ is a subgroup of $X^\mathbb N$ containing  $X^{(\mathbb N)}$; moreover,  $c_0(X)= X^{(\mathbb N)}$ iff $X$ has only trivial convergent sequences.

For a topological group $X$,   $\mathcal N(X)$ is the set of all neighborhoods of   $e \in X$. Clearly, $\S_+\in \mathcal N(\S)$.
We write $X^{\wedge}_{\rm{co}}$ for the group $X^{\wedge}$ endowed with the compact-open topology. For a subset $V$ of $X$
 let:
$$
V^{\triangleright}:=\{\xi\in X^{\wedge}:\xi(V)\subset \S_+\,\}\,.
 $$
If $X,Y$ are topological abelian groups  and $\varphi\in CHom(X,Y)$,  the mapping $\varphi^{\wedge}:Y^{\wedge}\to X^{\wedge}$,
 defined by  $\varphi^{\wedge}(\eta)=\eta \circ \varphi$  for $ \eta\in Y^{\wedge}$, is a group homomorphism called {\it the dual homomorphism}.

The {\em von Neumann's kernel } of a topological abelian group $X$ is defined by $${\mathbf n}(X)  = \bigcap \{\ker \xi: \xi \in X^\wedge\}.$$
 Clearly,  ${\mathbf n}(X)$  is a subgroup of  $X$, and $X$ is MAP iff ${\mathbf n}(X)=\{0\}$. If ${\mathbf n}(X)=X$ (i.e., $X^{\wedge}=\{1\}$), $X$ is called {\it minimally almost periodic}.

%\pagebreak

\section{Locally quasi convex groups}

Let us recall the definition of a locally quasi convex group.

\begin{definition}\label{qcx} {\em \cite{Vil}}
 A subset $A$ of a {\bf topological} group $G$ is called {\bf quasi-convex} if for every $x\in G\setminus A$ there exists
$\chi\in G^\wedge$ such that
$$
\chi(A)\subset \S_+,\,\,\,\text{but}\,\,\chi(x)\not\in \S_+\,.
$$
 A {\bf topological} group $G$ is called {\bf locally quasi-convex} if $\mathcal N(G)$ admits a basis consisting of quasi-convex subsets of $G$.
\end{definition}

Similar concepts were defined later in \cite{Su}, where  the terms {\it   polar set} and {\it   locally polar group} are used instead of   'quasi-convex set'  and   'locally  quasi-convex group'.  The author might not have been  aware of \cite{Vil}.
 % For the definition and properties of locally quasi-convex
 %topological groups we refer \cite{B, CMPT}.$\clubsuit$\\

The locally  precompact groups are a prominent class of locally quasi-convex groups. The following  statement characterizes the groups $X\in \rm{LPC}$ with countable duals.

\begin{Pro}\label{codu1}
For an infinite locally precompact Hausdorff topological abelian group $X$ TFAE:
\par
(i) $X$ is precompact metrizable.
\par
(ii) $X^{\wedge}$ is countable.
\end{Pro}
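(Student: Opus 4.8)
The plan is to prove the equivalence $(i)\Leftrightarrow(ii)$ for an infinite locally precompact Hausdorff abelian group $X$ by treating the two implications separately, using the structure theory of locally precompact groups together with Pontryagin-van Kampen duality for locally compact groups.

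\medskip

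\emph{First, the implication $(i)\Rightarrow(ii)$.} Suppose $X$ is precompact and metrizable. Let $\widetilde X$ be its completion; then $\widetilde X$ is a \emph{compact} metrizable abelian group, and since $X$ is dense in $\widetilde X$ the restriction map $\widetilde X^{\wedge}\to X^{\wedge}$ is an isomorphism of groups (every continuous character of a dense subgroup extends uniquely to the completion). So it suffices to show that a compact metrizable abelian group has a countable dual. By Pontryagin duality, $\widetilde X^{\wedge}$ is a discrete abelian group, and metrizability of the compact group $\widetilde X$ is equivalent to second countability, which in turn (again by duality, since the weight of a compact group equals the cardinality of its dual) forces $|\widetilde X^{\wedge}|\le\aleph_0$. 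Hence $X^{\wedge}$ is countable.

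\medskip

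\emph{Second, the implication $(ii)\Rightarrow(i)$.} Assume $X^{\wedge}$ is countable. Being locally precompact and Hausdorff, $X$ embeds as a dense subgroup of a locally compact Hausdorff abelian group $L$, and as above $L^{\wedge}\cong X^{\wedge}$ is countable. Now apply the structure theorem: $L\cong\R^n\times L_0$ with $L_0$ containing a compact open subgroup $K$. First, $n=0$: otherwise $L^{\wedge}$ contains a copy of $\R^n$, which has cardinality $\cont$. Next, $K^{\wedge}$ is a quotient of $L^{\wedge}$ (restriction $L^{\wedge}\to K^{\wedge}$ is surjective because $K$ is open, so $L/K$ is discrete and the sequence dualizes nicely), hence $K^{\wedge}$ is countable; since $K$ is compact, $K^{\wedge}$ is discrete, and by duality $K$ has weight $|K^{\wedge}|\le\aleph_0$, so $K$ is compact metrizable. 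Finally $L/K$ is discrete with countable dual; a discrete abelian group $D$ with $|D^{\wedge}|\le\aleph_0$ must itself be countable (if $D$ were uncountable, then $D^{\wedge}=\mathrm{Hom}(D,\S)$ would have cardinality at least $\cont$ — e.g.\ $D$ contains either an infinite-rank free subgroup or an infinite $p$-torsion subgroup, in both cases producing continuum-many characters), so $L/K$ is countable. Then $L=\bigcup_{g\in L/K} gK$ is a countable union of compact metrizable sets, hence second countable, hence $L$ is metrizable; its dense subgroup $X$ is then metrizable too. Moreover $X$ is precompact: a locally precompact group with a countable dual cannot have a non-trivial open compact subgroup complement giving non-precompactness — more precisely, since $X$ is dense in the metrizable $L$ and $X^{\wedge}=L^{\wedge}$, if $X$ were not precompact then $L$ would not be compact, i.e.\ $L$ would be a non-compact locally compact metrizable group with countable dual; but then $L$ would be $\sigma$-compact non-compact, so it contains a compact open subgroup $K$ of infinite index in a $\sigma$-compact discrete quotient $L/K\cong\Z^{(\N)}$-like group whose dual is already of size $\cont$ — contradiction. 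Hence $L$ is compact and $X$ precompact metrizable.

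\medskip

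\emph{Main obstacle.} The delicate point is the bookkeeping in $(ii)\Rightarrow(i)$: ruling out that $L/K$ (equivalently the discrete part) is infinite but ``small'' in dual. The cleanest route is the lemma that \emph{an infinite discrete abelian group has dual of cardinality exactly $\cont$} (its proof splits into the torsion-free case, where one uses a free subgroup or divisibility to get $2^{\aleph_0}$ characters into $\S$, and the torsion case, where an infinite bounded or unbounded torsion subgroup again yields $\cont$-many characters). Granting that, every step above is routine, and one concludes that a locally precompact Hausdorff abelian group with countable dual is forced to be compact-plus-nothing, i.e.\ precompact metrizable. I would isolate that cardinality lemma first and then the rest follows by the structure theorem and elementary duality of short exact sequences of locally compact groups.
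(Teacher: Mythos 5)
Your proposal is correct, but the implication $(ii)\Rightarrow(i)$ follows a genuinely different route from the paper's. Both arguments pass to the completion $L$ (the paper's $Y$) and use that $L^{\wedge}\cong X^{\wedge}$; from there the paper argues that the countable LCA group $L^{\wedge}_{\rm co}$ must be discrete because a second-category countable Hausdorff group is discrete, hence its dual is compact metrizable, and Pontryagin duality identifies $L$ with that dual. You instead invoke the structure theorem $L\cong\R^n\times L_0$ with $K$ compact open in $L_0$, kill the $\R^n$ factor by a cardinality count, and reduce everything to the lemma that an infinite discrete abelian group $D$ satisfies $|\mathrm{Hom}(D,\S)|=2^{|D|}\ge\cont$ (note: ``exactly $\cont$'' as you state it in your last paragraph is only right for countable $D$, but only the lower bound is used, so this is harmless). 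Two remarks on your write-up: (1) the final precompactness paragraph is muddled in its phrasing, but the substantive point is sound and could be stated once and for all: since every infinite discrete abelian group has dual of cardinality at least $\cont$, the quotient $L/K$ must be \emph{finite}, so $L$ is compact outright and the intermediate ``countable, hence $\sigma$-compact, hence metrizable'' bookkeeping is redundant; (2) your approach buys slightly more than the paper's statement, namely that $|X^{\wedge}|<\cont$ already forces $(i)$ -- a dichotomy the authors exploit elsewhere (Proposition 7.3) -- whereas the paper's Baire-category argument is shorter and avoids the structure theorem entirely.
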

\begin{proof}
$(i)\Longrightarrow (ii)$. This follows from \cite[(24.14)]{HR}.\\
$(ii)\Longrightarrow (i)$. Let $Y$ be the completion of $X$. It is known that the groups  $X^{\wedge}$ and  $Y^{\wedge}$ are algebraically isomorphic, hence, $Y^{\wedge}$ is countable. On the other hand, $Y$ is a locally compact Hausdorff topological abelian group therefore $Y^{\wedge}_{\rm{co}}$ is LCA. %Therefore, $Y^{\wedge}_{\rm{co}}$ is LCA and countable. This,
  Since  a second category countable Hausdorff  topological group is discrete, we have  that $Y^{\wedge}_{\rm{co}}$ is a discrete  countable group. Hence $(Y^{\wedge}_{\rm{co}})^{\wedge}_{\rm{co}}$ is a compact metrizable group. By Pontryagin's theorem, $Y$ and  $(Y^{\wedge}_{\rm{co}})^{\wedge}_{\rm{co}}$ are topologically isomorphic. Thus $Y$ is compact  metrizable and its topological subgroup $X$ is precompact metrizable.
\end{proof}

\begin{remark}{\em It follows from Proposition \ref{codu1} that if $X$ is a compact non-metrizable group, then $\rm{Card}(X^{\wedge})>\aleph_0$.}
\end{remark}

We shall see below that  implication $(ii)\Longrightarrow (i)$ of Proposition \ref{codu1} may fail if $X$ is a locally quasi-convex Hausdorff group.
 %non-precompact metrizable locally quasi-convex groups may also have countable duals.

\begin{Pro}\label{2oct}
Let $X$ be a precompact Hausdorff topological group and $V\in \mathcal N(X)$. Then $V^{\triangleright}$ is a finite subset of $X^{\wedge}$.
\end{Pro}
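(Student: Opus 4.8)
The plan is to use precompactness of $X$ to cover it by finitely many translates of a small neighborhood, and then exploit the restriction $\xi(V)\subset\S_+$ to pin down each $\xi\in V^{\triangleright}$ on those translates, forcing $V^{\triangleright}$ to be finite. First I would pick a symmetric $W\in\mathcal N(X)$ with $W+W+W+W\subset V$ (using continuity of the group operations); by precompactness there is a finite set $F\subset X$ with $X=F+W$. The key observation is that if $\xi\in V^{\triangleright}$, then for every $x\in X$ and every $w$ in a fixed small neighborhood we have good control of $\xi(x+w)-\xi(x)$, since $w$ ranges over a set mapped by $\xi$ into a small arc around $1$; more precisely, $\xi(W+W)\subset\S_+$ (as $W+W\subset V$), and iterating, $\xi(W)$ lies in an arc strictly smaller than $\S_+$, say in $\{s:\operatorname{Re}(s)\ge c\}$ for a fixed $c>0$ independent of $\xi$.

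The main step is then the following finiteness argument. Two characters $\xi,\eta\in V^{\triangleright}$ that agree on the finite set $F$ must in fact be equal: for any $x\in X$ write $x=a+w$ with $a\in F$, $w\in W$; then $\xi(x)\eta(x)^{-1}=\xi(a)\eta(a)^{-1}\cdot\xi(w)\eta(w)^{-1}=\xi(w)\eta(w)^{-1}$, and since both $\xi(w)$ and $\eta(w)$ lie in a small arc around $1$, their quotient lies in a small arc around $1$; but $\xi\eta^{-1}$ is a character, and a character whose image lies in a sufficiently small arc around $1$ (one containing no nontrivial subgroup of $\S$) must be trivial — indeed $\S_+$ itself contains no nontrivial subgroup of $\S$, so it suffices that $\xi\eta^{-1}$ maps all of $X$ into $\S_+$, which holds because $\xi\eta^{-1}(x)=\xi\eta^{-1}(w)$ ranges only over the small arc $\xi(W)\overline{\eta(W)}\subset\S_+$. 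Hence the restriction map $V^{\triangleright}\to\S^{F}$, $\xi\mapsto(\xi(a))_{a\in F}$, is injective.

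It remains to see that the image of this restriction map is finite, which is where I expect the only real subtlety to lie. Since $\S^F$ is already compact but not finite, injectivity alone is not enough; one must use that $\xi(F)$ is constrained. Here I would argue that $V^{\triangleright}$, as a subset of $X^\wedge$, is equicontinuous (it is contained in the polar of the neighborhood $V$), hence relatively compact in the compact-open topology by Ascoli's theorem; moreover $V^{\triangleright}$ is closed in that topology, so it is compact. On the other hand $V^{\triangleright}$ is also discrete: by the argument of the previous paragraph, if $\xi,\eta\in V^{\triangleright}$ are close enough in the compact-open topology that they are uniformly close on $F$, then $\xi=\eta$. A compact discrete space is finite, so $V^{\triangleright}$ is finite. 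The main obstacle is making the equicontinuity/Ascoli step precise in the purely topological (non-metrizable) setting, but since $X$ is precompact this is standard; alternatively one can bypass Ascoli entirely by noting that $F$ finite and $\xi(a)\in\S$ forces the relevant quotient group $\langle\xi:\xi\in V^{\triangleright}\rangle$ restricted to $\langle F\rangle$ to be a subgroup of $\S^F$ all of whose elements have image in the arc determined by $V$, which is a finite set of possibilities once one observes each coordinate $\xi(a)$ lies in finitely many positions — the cleanest route, and the one I would ultimately write up, is the compact-plus-discrete argument.
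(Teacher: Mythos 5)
Your proof is correct, but after the shared opening move it takes a genuinely different route from the paper's. Both arguments rest on the known fact that $V^{\triangleright}$ is compact in $X^{\wedge}_{\rm{co}}$ (the paper cites it without proof; your equicontinuity-plus-Ascoli justification is the standard one). From there the paper reduces to the compact case: it passes to the completion $K$ of $X$, uses closedness of $\S_+$ to identify $V^{\triangleright}$ with the set of restrictions of $U^{\triangleright}$ for $U=\overline{V}\in\mathcal N(K)$, and quotes the discreteness of $K^{\wedge}_{\rm{co}}$ for compact $K$. You instead prove discreteness of $V^{\triangleright}$ directly from precompactness: the cover $X=F+W$ with $F$ finite and $W+W+W+W\subset V$, together with the fact that $\S_+$ contains no nontrivial subgroup of $\S$, shows that any $\xi,\eta\in V^{\triangleright}$ with $\xi(a)\overline{\eta(a)}$ in the arc $\{e^{i\theta}:|\theta|\le\pi/4\}$ for all $a\in F$ satisfy $(\xi\eta^{-1})(X)\subset\S_+$, hence $\xi=\eta$; compact plus discrete gives finite. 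This buys you a self-contained argument that never invokes the completion or the structure of duals of compact groups. One further remark: your separation estimate already shows that the restriction map $V^{\triangleright}\to\S^{F}$ is injective with image pairwise separated by angle $\pi/4$ in at least one coordinate, and such a subset of $\S^{F}$ is automatically finite (cover $\S^{F}$ by finitely many products of arcs of length $\pi/4$; each contains at most one point of the image). This disposes of the Ascoli step entirely --- the very step you flag as the only delicate point. Your closing ``alternative'' gestures at this, but as written (``each coordinate $\xi(a)$ lies in finitely many positions'') it is not correct, since $\xi(a)$ is a priori unconstrained in $\S$; it is the separation of the image, not a restriction on individual coordinates, that forces finiteness.
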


\begin{proof} It is known that $V^{\triangleright}$ is a compact subset of $X^{\wedge}_{\rm{co}}$.\\
 Suppose first that $X$ is compact.  Then $X^{\wedge}_{\rm{co}}$ is discrete and  in this case $V^{\triangleright}$ is finite.\\
If  $X$ is not compact, then it can be viewed as a dense subgroup of a compact Hausdorff topological group $K$. Let $U$ denote  the closure of $V$ in $K$; clearly $U\in \mathcal N(K)$. Since $\S_+$ is closed in $\S$, the density of $V$ in $U$ implies that
 $$
 V^{\triangleright}=\{\xi|_{X}: \xi\in U^{\triangleright}\,\}.
 $$
Now,   $U^{\triangleright}$  finite, implies that   $V^{\triangleright}$ is finite as well.
\end{proof}

Let us conclude this section with the proofs of Theorems \ref{mtv} and \ref{elena2}.

\medskip

\noindent{\bf Proof of Theorem \ref{mtv}.} Let $(G, \tau) \in \rm{LCS}$. Then  $(G, \tau)' $ will stand for the set of $\tau$-continuous linear forms from $G$ to $\R$. For every $l \in (G, \tau)'$ the mapping $\rho_l : G \to \S$ defined by $\rho_l (x) = exp \{2\pi i l(x) \}$ for all $x \in G$ is a continuous character. It is easy to see that the mapping   $\rho : (G, \tau)'
\to (G, \tau)^\wedge$, given by $l \mapsto \rho_l$  is an injective group homomorphism. It is surjective as well \cite[(23.32)]{HR}.
Therefore, we have:
\begin{equation} \label{smith}
(G,\tau)^{\wedge}=\{\rho_l: l\in (G, \tau)'\}\,.
\end{equation}
Take  a metrizable  $(G, \mu) \in$ LCS and let  $(G, \tau) \in$ LCS be such that $(G,\tau)^{\wedge}=(G,\mu)^{\wedge}$. Then from
 (\ref{smith}) we get $(G,\tau)^{'}=(G,\mu)^{'}$. Since $\mu$ is a metrizable locally convex vector topology in $G$, the last equality
 according to \cite[IV.3.4]{Schae} implies that $\tau\le \mu$ and  we are done. $\ \Box$\\

\medskip

\noindent{\bf Proof of Theorem \ref{elena2}.} Since $(G,\tau)^{\wedge}$ is countable, $\tau^+$ is metrizable. The topology $\tau^+$ is  precompact and compatible for $(G,\tau)$ by Proposition \ref{coro}. The group $(G,\tau^+)\in \rm{LQC}$ because precompact groups are locally quasi-convex. Since $(G,\tau)$ is not precompact we have that $\tau^+ < \tau$ being $\tau^+\ne \tau$ again by Proposition \ref{coro}. Hence $(G,\tau^+)$ is  a metrizable precompact group which is not a Mackey group in $\rm{LQC}$. $\ \Box$

\section{Groups of sequences}
\subsection{The uniform topology in $X^{\N}$.}

%\subsection{Three topologies on the group of sequences \NB of a topological group.}

%\begin{notation} {\em
In what follows $X$ will be a fixed Hausdorff topological abelian group. We denote by $\mathfrak{p}_X$ the product topology in $X^{\N}$ and
by $\mathfrak{b}_X$ the box topology in $X^{\N}$.
\par
It is easily verified that the collection
$$
\{ V^{\N}:V\in \mathcal N(X)\}
$$
 is a basis at $e$ for a group topology in $X^{\N}$ which we denote by  $\mathfrak{u}_X$ and call {\it the uniform  topology}.
 In all three cases we shall omit the subscript
 $_X$ when no confusion is possible.

 %\begin{remark}\label{NewUD} (a)
   The topology  $\mathfrak{u}$ in $X^{\N}$ is nothing else but the topology of uniform convergence on $\N$ when the elements of $X^{\N}$ are viewed
   as functions from $\N$ to $X$ and $X$ is considered as a uniform space with respect to its left (=right) uniformity.
   Since it  plays an important role in the sequel, we give in the next proposition an account of its main properties.

 We write:
$$
\mathfrak{p}_0:=\mathfrak{p}|_{c_0(X)}\;\; , \;\;\mathfrak{b}_0:=\mathfrak{b}|_{c_0(X)}\;\; \mbox{ and }\;\; \mathfrak{u}_0:=\mathfrak{u}|_{c_0(X)}.
$$

 \begin{Pro}\label{descr1} Let $(X,+)$ be a Hausdorff topological abelian group.
 \begin{itemize}
   \item[(a)]  The uniform topology $\mathfrak{u}$    is a Hausdorff group topology in $X^\mathbb N$ with $\mathfrak{p}\le \mathfrak{u}\le \mathfrak{b}$. Moreover,
     \begin{itemize}
         \item[(a$_1$)]  $\mathfrak{p}|_{X^{(\mathbb N)}}=\mathfrak{u}|_{X^{(\mathbb N)}}\,\Longleftrightarrow\, X=\{0\}$.
         \item[(a$_2$)] $\mathfrak{u}|_{X^{(\mathbb N)}}=\mathfrak{b}|_{X^{(\mathbb N)}} \Longrightarrow\,
          X\,\text{is\, a \rm{P}-group}\Longrightarrow\,\mathfrak{u}=\mathfrak{b}$; in particular, if $X$ is metrizable and $\mathfrak{u}|_{X^{(\mathbb N)}}=\mathfrak{b}|_{X^{(\mathbb N)}}$, then $X$ is discrete.
      \end{itemize}
   \item[(b)]  The passage from $X$ to $(X^\mathbb N,\mathfrak{u})$ preserves (sequential) completeness,  metrizability, MAP and local quasi-convexity.

   \item[(c)]   If $X\ne\{0\}$ , then:
      \begin{itemize}
        \item[(c$_1$)]  $(X^\mathbb N,\mathfrak{u})$ is not separable.
        \item[(c$_2$)]  $(X^{(\N)}, \mathfrak{u}|_{X^{(\N)}})$ is not precompact and hence, $(c_0(X),\mathfrak{u}_0)$ and  $(X^\mathbb N,\mathfrak{u})$ are  \NB neither precompact.
            %not precompact as well.
       \end{itemize}
\end{itemize}
% The collection $\{ V^{\N}:V\in \mathcal N(X)\}$ is a basis at zero for  $u$.
 % \hskip10pt (a$_1$) $(b)$  $(c)$  \hskip10pt  (c$_1$)
% $(g)$ If $X$ is MAP, then $(X^\mathbb N,\mathfrak{u})$ is MAP.
% $(h)$ If $X$ is locally quasi-convex, then $(X^\mathbb N,\mathfrak{u})$ is locally quasi-convex.
%\hskip10pt  (c$_2$)
   \end{Pro}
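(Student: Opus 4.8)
My plan is to verify the statements in the order (a), (b), (c), since the later parts reuse ingredients from the earlier ones. For part (a), I would first observe directly that $\{V^{\N}:V\in\mathcal N(X)\}$ is a filter base closed under the group operations, so $\mathfrak u$ is a group topology; the containment $\mathfrak p\le\mathfrak u\le\mathfrak b$ is immediate because a basic $\mathfrak p$-neighborhood $\prod_n U_n$ with $U_n=X$ for all but finitely many $n$ contains $V^{\N}$ for suitable $V$, and $V^{\N}$ contains the basic $\mathfrak b$-neighborhood $\prod_n V_n$ once we take all $V_n=V$. Hausdorffness of $\mathfrak u$ follows from Hausdorffness of $\mathfrak p$ and $\mathfrak p\le\mathfrak u$. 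For (a$_1$): if $X\ne\{0\}$ pick $x\ne 0$ and a $V\in\mathcal N(X)$ with $x\notin V$; then the sequence with $x$ in every coordinate of an unbounded set witnesses that $V^{\N}\cap X^{(\N)}$ is not a $\mathfrak p$-neighborhood in $X^{(\N)}$, since any $\mathfrak p$-neighborhood restricted to $X^{(\N)}$ constrains only finitely many coordinates. The converse direction ($X=\{0\}$ makes both topologies trivial) is trivial. For (a$_2$): if $\mathfrak u|_{X^{(\N)}}=\mathfrak b|_{X^{(\N)}}$, I would unwind what a $\mathfrak b$-neighborhood inside $X^{(\N)}$ looks like — it is determined by a sequence $(V_n)$ of neighborhoods — and conclude that there must be a single $V\in\mathcal N(X)$ with $V\subseteq\bigcap_n V_n$ for the prescribed sequences, which forces $X$ to be a P-group (countable intersections of neighborhoods of $0$ are neighborhoods). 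The implication ``$X$ is a P-group $\Rightarrow\mathfrak u=\mathfrak b$'' is then a direct check: given a box $\prod_n V_n$, the set $\bigcap_n V_n\in\mathcal N(X)$ by the P-group property, and its power sits inside the box. The parenthetical about metrizable $X$ is just the standard fact that a metrizable P-group is discrete.

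For part (b), I would treat each property separately but using the same structural observation: a sequence $(\mathbf x^{(k)})_k$ in $X^{\N}$ is $\mathfrak u$-Cauchy (resp.\ $\mathfrak u$-convergent) iff it is uniformly Cauchy (resp.\ uniformly convergent) in the coordinates, and one builds the limit coordinatewise, checking the limit lies in $X^{\N}$ and the convergence is uniform — this gives (sequential) completeness. Metrizability transfers because if $d$ is a (bounded, invariant) metric on $X$ then $\sup_n d(x_n,y_n)$ is an invariant metric inducing $\mathfrak u$. For MAP: characters of $(X^{\N},\mathfrak u)$ separate points because already the characters lifted from each coordinate projection $p_n$ do (these are $\mathfrak u$-continuous since $\mathfrak p\le\mathfrak u$), using that $X$ is MAP. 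For local quasi-convexity: if $\{V_\alpha\}$ is a base of quasi-convex neighborhoods in $X$, I would show each $V_\alpha^{\N}$ is quasi-convex in $(X^{\N},\mathfrak u)$ — given $\mathbf y\notin V_\alpha^{\N}$, some coordinate $y_{n_0}\notin V_\alpha$, so a character $\chi$ of $X$ with $\chi(V_\alpha)\subseteq\S_+$, $\chi(y_{n_0})\notin\S_+$ pulls back via $p_{n_0}$ to the desired character of $X^{\N}$.

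For part (c), under the hypothesis $X\ne\{0\}$: for (c$_1$), non-separability, I would exhibit an uncountable $\mathfrak u$-uniformly-discrete family — fix $x\ne0$ and $V$ with $x\notin V$, $x\notin -V$; for each subset $A\subseteq\N$ let $\mathbf x^A$ have $x$ in coordinates of $A$ and $0$ elsewhere; distinct $A,B$ give $\mathbf x^A-\mathbf x^B\notin V^{\N}$, so no countable set is $\mathfrak u$-dense. For (c$_2$), non-precompactness of $(X^{(\N)},\mathfrak u|_{X^{(\N)}})$: the same family $\{\nu_n(x):n\in\N\}$ (using the notation $\nu_n$ from the paper) cannot be covered by finitely many translates of $V^{\N}$, since $\nu_n(x)-\nu_m(x)\notin V^{\N}$ for $n\ne m$ when $x\notin V$; this is exactly the failure of total boundedness. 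Precompactness passes to subgroups and supergroups sitting densely, and $(X^{(\N)},\mathfrak u|_{X^{(\N)}})$ is a subgroup of both $(c_0(X),\mathfrak u_0)$ and $(X^{\N},\mathfrak u)$; but more simply, a subgroup of a precompact group is precompact, so non-precompactness of $X^{(\N)}$ forces non-precompactness of the larger groups. The main obstacle, I expect, is the careful bookkeeping in (a$_2$) — pinning down exactly which ``diagonal'' box neighborhoods to test so that the equality $\mathfrak u|_{X^{(\N)}}=\mathfrak b|_{X^{(\N)}}$ really does extract a countable-intersection-stability statement about $\mathcal N(X)$ — whereas everything else reduces to the single recurring trick of pushing characters and separating families through the coordinate projections $p_n$.
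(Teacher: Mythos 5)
Your plan is correct and follows essentially the same route as the paper's proof: the same $\nu_k(V)\subseteq\prod_nU_n$ trick for (a$_2$), the same $\{0,x\}^{\N}$-type uniformly discrete family of size $\cont$ for (c$_1$), the same $\nu_m(x)-\nu_n(x)\notin V^{\N}$ witness for (c$_2$), and the same coordinate-projection arguments for MAP and local quasi-convexity. The only slip is in (a$_1$): an element with $x$ in infinitely many coordinates does not lie in $X^{(\N)}$, so the witnesses should be the single-coordinate elements $\nu_k(x)$ for large $k$ -- but the reason you give (a $\mathfrak{p}$-neighborhood constrains only finitely many coordinates) is exactly the right one and is how the paper argues.
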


  \begin{proof}
  $(a)$ The first assertion has a straightforward proof. %Take a non-zero ${\bf x}=(x_n)_{n\in \mathbb N}\in  X^\mathbb N $. Then $x_n\ne 0$ for some $n\in \N$. Since $X$ is Hausdorff, there is a $V\in \mathcal N(X)$
% such that $x_n\not\in V$. Then ${\bf x}\not\in V^\N$.
 \par
 (a$_1$) Suppose that $X\ne \{0\}$. Take $x\in X\setminus\{0\}$. Then $\nu_k(x)\in X^{(\mathbb N)},\, k=1,2,\dots$ and the sequence $(\nu_k(x))_{k\in \mathbb N}$ tends to $0$ in $\mathfrak{p}$. Since $X$ is Hausdorff, there is a $V\in \mathcal N(X)$ such that $x\not\in V$. Then $\nu_k(x)\not\in V^\N,\, k=1,2,\dots$. Hence, the sequence $(\nu_k(x))_{k\in \mathbb N}$ does not tend to $0$ in $\mathfrak{u}$.
 \par
 (a$_2$) Suppose that $\mathfrak{u}|_{X^{(\mathbb N)}}\ge\mathfrak{b}|_{X^{(\mathbb N)}}$. Take arbitrarily $U_n\in \mathcal N(X),\,n=1,2,\dots$. Then $(\prod_{n\in \N}U_n)\cap X^{(\mathbb N)} $ is a neighborhood of zero in $\mathfrak{b}|_{X^{(\mathbb N)}}$. As $\mathfrak{u}|_{X^{(\mathbb N)}}\ge\mathfrak{b}|_{X^{(\mathbb N)}}$, there is a $V\in \mathcal N(X)$ such that
 $$
 V^\N\cap X^{(\mathbb N)}\subset (\prod_{n\in \N}U_n)\cap X^{(\mathbb N)}\,.
 $$
 From  $\nu_k(V)\subseteq V^\N\cap X^{(\mathbb N)},\,k=1,2,\dots$, we get: $\nu_k(V)\subseteq (\prod_{n\in \N}U_n)\cap X^{(\mathbb N)},\,k=1,2,\dots$.
 So, $V\subseteq U_n,\,n=1,2,\dots$, therefore $V\subset\bigcap_{n\in \N}U_n$. Thus,
  %the inequality $\mathfrak{u}|_{X^{(\mathbb N)}}\ge\mathfrak{b}|_{X^{(\mathbb N)}}$ implies that
 for each sequence $U_n\in \mathcal N(X),\,n=1,2,\dots$,  $\bigcap_{n\in \N}U_n\in \mathcal N(X)$. Consequently, $X$ is a P-group.\\
 The implication  '$X\,\text{is\,a P-group}\Longrightarrow\,\mathfrak{u}=\mathfrak{b}$' is easy to verify.

 The last assertion in (a$_2$) follows from the well-known fact that a metrizable P-space is discrete.

(b)    We omit the standard proofs of  the first two cases.

Assume that $X$ is MAP and  let  ${\bf x}=(x_n)_{n\in \mathbb N}\in  X^\mathbb N\setminus {\{\bf 0}\}$. Take $n\in \mathbb N$ such that $p_n({\bf x})\ne 0$.
   Since $X$ is MAP, there is $\xi\in X^{\wedge}$ such that $\xi(p_n({\bf x}))\ne 1$. Clearly $p_n$ is $\mathfrak{u}$-continuous, therefore $\varphi:=\xi\circ p_n\in (X^\mathbb N,u)^{\wedge}$ and $\varphi ({\bf x})\ne 1$. Hence, $(X^\mathbb N,\mathfrak{u})$ is MAP.

In order to prove that $(X^\mathbb N,\mathfrak{u})$ is locally quasi-convex provided that   $X$ has the same property,
just observe that for any quasi-convex    $V\in\mathcal N(X)$, $V^{\N}=\bigcap_{n\in \mathbb N}p_n^{-1}(V)\, $,
and quasi-convexity is preserved under  inverse images  \NB by continuous homomorphisms and under  arbitrary intersections.

(c$_1$) Let us fix $x\in X\setminus\{0\}$ and $U\in\mathcal N(X)$ such that $U\cap \{-x,x\}=\emptyset$. Write $C=\{0,x\}^{\mathbb N}$. Then
  \begin{equation}\label{tir12apr}
  {\rm{card}}(C)=\cont\,,\quad {\bf y_1}\in C,\,{\bf y_2}\in C,\,{\bf y_1}\ne {\bf y_2}\,\Longrightarrow\,{\bf y_1}- {\bf y_2}\not\in  U^{\N}\,.
  \end{equation}
  Take a symmetric $V\in\mathcal N(X)$ such that $V+V\subset U$. Let $D$ be a dense subset of $(X^\mathbb N,u)$. Then for every ${\bf y}\in C$ we can find a
  ${\bf d}_{{\bf y}}\in D$ such that ${\bf d}_{{\bf y}}\in V+{\bf y}$. From (\ref{tir12apr}) we get:
  $$
  {\bf y_1}\in C,\,{\bf y_2}\in C,\,{\bf y_1}\ne {\bf y_2}\,\Longrightarrow\,{\bf d}_{{\bf y}_1}\ne {\bf d}_{{\bf y}_2}\,.
  $$
 Thus, for any dense subset $D$,  %${\rm{card}}(C)=\cont$ imply that
 ${\rm{card}}(D)\ge \cont$, and therefore $(X^\mathbb N,\mathfrak{u})$ is nonseparable.\\

 (c$_2$) Fix $x\in X$, $x\ne 0$ and  a symmetric $V\in \mathcal N(X)$ such that $x\not\in V$. Then
  \begin{equation}\label{cauchy}
  m,n\in \mathbb N,\,m\ne n\,\Longrightarrow\, \nu_m(x)-\nu_{n}(x)\not\in  V^\N\,.
  \end{equation}
 Now  (\ref{cauchy}) together with  $ \nu_m(x)-\nu_n(x)\in X^{(\N)}\ ,\forall m,n\in \N$, yield that $(X^{(\N)}, \mathfrak{u}|_{X^{(\N)}})$ is not precompact.
  \end{proof}
 \begin{remark}\label{NewUD}  {\em If $X$ is a compact metrizable group and $\rho$ is  an invariant metric for $X$,  then
 the equality
 $$d_{\infty}({\bf x},{\bf y})=\sup_{n\in \mathbb N} \rho (x_n, y_n),\quad {\bf x},{\bf y}\in {X}^\mathbb N\,
$$
defines an invariant metric for
 $(X^{\N},\mathfrak{u})$. In particular,
the topology of $({\mathbb S}^\mathbb N,\mathfrak{u})$ can be induced by the following metric: }
$$
d_{\infty}({\bf x},{\bf y})=\sup_{n\in \mathbb N}|x_n-y_n|,\quad {\bf x},{\bf y}\in {\mathbb S}^\mathbb N\,.\eqno(\dag)
$$
According to \cite[Example 4.2]{CoRo66} the metric group $({\mathbb S}^\mathbb N,d_{\infty})$ has the following remarkable property:
 it is not precompact, but every uniformly  continuous real-valued function defined on it, is bounded.

\end{remark}

  \subsection{The group of null sequences $c_0(X)$.}\par

 \NB
  In this section we study the group %$c_0(X)$
   of all null sequences of a  topological abelian group $X$ as a subgroup of $(X^{\N}, \mathfrak u)$.
 %First we  see that it is a closed  subgroup of $(X^\mathbb N,\mathfrak{u})$, more precisely, it coincides with the closure of $X^{(\mathbb N)}$ in $(X^\mathbb N,\mathfrak{u})$.

  \begin{lemma}\label{agu1}
 Let   $X$ be a topological group. Then:
   \begin{itemize}
   \item[(a)]  $c_0(X)$ is closed in $(X^\mathbb N,\mathfrak{u})$.
   \item[(b)] If ${\bf x}=(x_n)_{n\in \mathbb N}\in c_0(X)$, then the sequence $\left(\sum_{k=1}^n\nu_k(x_k)\right)_{n\in  \N}$ converges to ${\bf x}$
   in the topology $\mathfrak{u}$;
   in particular, $X^{(\mathbb N)}$ is a $\mathfrak{u}$-dense subset of $c_0(X)$.
     \end{itemize}
  \end{lemma}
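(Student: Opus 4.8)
The plan is to reduce everything to the basic $\mathfrak u$-neighbourhoods $V^{\N}$, $V\in\mathcal N(X)$, and to use the same symmetric-neighbourhood/``triangle inequality'' device that appeared in the proof of Proposition~\ref{descr1}.

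\smallskip

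\emph{Part (a).} I would prove that $X^{\N}\setminus c_0(X)$ is $\mathfrak u$-open. Let ${\bf y}=(y_n)_{n\in\N}\in X^{\N}\setminus c_0(X)$. Since $(y_n)$ does not converge to $e$, there is a single $U\in\mathcal N(X)$ with $y_n\notin U$ for infinitely many $n$. Choose a symmetric $V\in\mathcal N(X)$ with $V+V\subseteq U$, and consider the $\mathfrak u$-neighbourhood ${\bf y}+V^{\N}$ of ${\bf y}$. If some ${\bf z}=(z_n)_{n\in\N}$ were in $({\bf y}+V^{\N})\cap c_0(X)$, then $y_n-z_n\in V$ for every $n$ (using that $V$ is symmetric), while, since $z_n\to e$, there is $N_0$ with $z_n\in V$ for all $n\ge N_0$; picking $n\ge N_0$ with $y_n\notin U$ — possible, as there are infinitely many such indices — gives $y_n=(y_n-z_n)+z_n\in V+V\subseteq U$, a contradiction. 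Hence $({\bf y}+V^{\N})\cap c_0(X)=\emptyset$, so $c_0(X)$ is $\mathfrak u$-closed.

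\smallskip

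\emph{Part (b).} Fix ${\bf x}=(x_n)_{n\in\N}\in c_0(X)$ and set ${\bf s}_n:=\sum_{k=1}^n\nu_k(x_k)$. By the definition of the $\nu_k$ this is exactly the sequence $(x_1,\dots,x_n,e,e,\dots)$, so ${\bf x}-{\bf s}_n$ equals $e$ in its first $n$ coordinates and $x_k$ in coordinate $k$ for $k>n$. Given $V\in\mathcal N(X)$, choose $N$ with $x_k\in V$ for all $k>N$ (possible since $x_n\to e$). Then for every $n\ge N$ each coordinate of ${\bf x}-{\bf s}_n$ lies in $V$ — the first $n$ are $e\in V$, the remaining ones are among the $x_k$ with $k>N$ — so ${\bf x}-{\bf s}_n\in V^{\N}$. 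Therefore ${\bf s}_n\to{\bf x}$ in $\mathfrak u$. Since each ${\bf s}_n$ lies in $X^{(\N)}\subseteq c_0(X)$, this proves the convergence assertion and, letting ${\bf x}$ range over $c_0(X)$, the $\mathfrak u$-density of $X^{(\N)}$ in $c_0(X)$.

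\smallskip

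I do not expect a genuine obstacle here; the one point requiring care is the matching of the quantifiers in part~(a) — ``$y_n\notin U$ for infinitely many $n$'' against ``$z_n\in V$ for all large $n$'' — which is precisely what dictates fixing a single $U$ (rather than a shrinking family) together with a symmetric $V$ satisfying $V+V\subseteq U$.
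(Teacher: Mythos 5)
Your proof is correct and follows essentially the same route as the paper's: for (a) you isolate a single $U$ witnessing non-convergence, pass to a symmetric $V$ with $V+V\subseteq U$, and show the basic neighbourhood ${\bf y}+V^{\N}$ misses $c_0(X)$; for (b) you observe that the tail of ${\bf x}-{\bf s}_n$ eventually lies in $V^{\N}$. Nothing further is needed.
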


  \begin{proof} In order to prove $(a)$, pick ${\bf x}=(x_n)_{n\in \mathbb N} X^\N \setminus c_0(X )$. There exists then $V \in \mathcal N(X)$ and a subsequence $x_{n_k} \notin V$, $k \in \N$. Take a symmetric  $V_1\in\mathcal N(X)$ such that $V_1 + V_1 \subset V$.
  Now we have: $({\bf x} + (V_1)^{\N}) \cap c_0(X) =\emptyset$. In fact, for any ${\bf z} \in (V_1)^{\N}$, $x_{n_k} + z_{n_k}
\notin V_1 $ for otherwise $x_{n_k} \in V_1 + V_1 \subset V$. Therefore ${\bf x} + {\bf z} \notin c_0(X)$.

In order to prove $(b)$, let ${\bf y_n}=\sum_{k=1}^n\nu_k(x_k)$ for $n\in \N$. Fix $U\in\mathcal N(X)$ and pick a  symmetric $V\in\mathcal N(X)$ with $V\subset U$.
    Since ${\bf x}=(x_n)_{n\in \mathbb N}\in  c_0(X )$, for some $k_0\in \mathbb N$ we  have $x_k\in V,\,\forall k\ge k_0$, i.e., ${\bf y_k}- {\bf x} \in V^{\N}\subset  U^{\N}$. Hence ${\bf y_k} \in{\bf x} +  U^{\N}$ $\forall k\ge k_0$ and therefore ${\bf y_n}$  converges to ${\bf x}$. This proves also the last assertion of (b).
   %\item[(a)] if ${\bf x}=(x_n)_{n\in \mathbb N}\in c_0(X)$, then the sequence $\left(\sum_{k=1}^n\nu_k(x_k)\right)_{n\in  \N}$ converges to ${\bf x}$ in the topology $\mathfrak{u}$;
%   in particular, $X^{(\mathbb N)}$ is densely contained  in $c_0(X)$.
%   \item[(b)]  $c_0(X)$ is closed in $(X^\mathbb N,\mathfrak{u})$.
%     \end{itemize}
%  \end{lemma}
%
%  \begin{proof} In order to prove $(a)$, let ${\bf y_n}=\sum_{k=1}^n\nu_k(x_k)$ for $n\in \N$. Fix $U\in\mathcal N(X)$ and pick a  symmetric $V\in\mathcal N(X)$ with $V\subset U$.
%    Since ${\bf x}=(x_n)_{n\in \mathbb N}\in  c_0(X )$, for some $k_0\in \mathbb N$ we  have $x_k\in V,\,\forall k\ge k_0$, i.e., ${\bf y_k}- {\bf x} \in V^{\N}\subset  U^{\N}$. Hence ${\bf y_k} \in{\bf x} +  U^{\N}$ $\forall k\ge k_0$ and therefore ${\bf y_n}$  converges to ${\bf x}$. This proves also the last assertion of (a).
%
% To prove $(b)$,   pick ${\bf x}=(x_n)_{n\in \mathbb N}\in  X^\mathbb N$   in the $\mathfrak{u}$-closure of $c_0(X)$. Fix $U\in\mathcal N(X)$ and a symmetric $V\in\mathcal N(X)$ with $V+V\subset U$. Then $({\bf x}+V^{\N})\cap c_0(X)\ne \emptyset $. Take ${\bf y}=(y_n)_{n\in \mathbb N}\in ({\bf x}+ V^{\N})\cap c_0(X )$. Then $x_k-y_k\in V,\,k=1,2,\dots$ (the symmetry of $V$ is used here). Since ${\bf y}=(y_n)_{n\in \mathbb N}\in  c_0(X )$, for some $k_0\in \mathbb N$ we will have $y_k\in V,\,\forall k\ge k_0$. Hence
%  $$
%  x_k\in V+y_k\subset V+V\subset U,\,\forall k\ge k_0\,.
%  $$
%  Consequently, $\lim_nx_n=0.$ Therefore, $c_0(X)$ is closed in $(X^\mathbb N,\mathfrak{u})$.
\end{proof}

 Thus, the situation described in the   previous Lemma is:
 $$X^{(\mathbb N)}\stackrel{\mathfrak{u}-densely}{ \subset } c_0(X)\stackrel{\mathfrak{u}-closed}{ \subset }X^\mathbb N$$
%Let us summarize the  properties of  the topological group
Since the groups of the form  $(c_0(X), \mathfrak{u}_0)$ are  the main object of our future considerations, we summarize now those properties inherited from the corresponding ones in $(X^{\N}, \frak u)$, or lifted from properties of $X^{(\mathbb N)}$.
%as described in Proposition \ref{descr1}.

   \begin{Pro}\label{descr2m} Let $X$ be a Hausdorff topological abelian group.
 \begin{itemize}
   \item[(a)]  $(c_0(X), \mathfrak{u}_0)$  is a Hausdorff topological group  having as  a basis at zero the collection $\{ V^{\N}\cap c_0(X) :V\in \mathcal N(X)\}$.
   \item[(b)]  $\mathfrak{p}_0 \le \mathfrak{u}_0\le \mathfrak{b}_0$. Moreover, $\mathfrak{p}_0 =\mathfrak{u}_0\,\Longleftrightarrow\, X=\{0\}$; if $X$ is metrizable and $\mathfrak{u}_0 =\mathfrak{b}_0$, then $X$ is discrete.
 %The topology $u_0$ is strictly between the restrictions of the  box topology and   the product topology, provided $X \neq \{0\}$.
   \item[(c)]   The passage from $X$ to $(c_0(X),\mathfrak{u}_0)$ preserves (sequential) completeness, metrizability, separability,
 MAP, local quasi-convexity,  non-discreteness, and connectedness.
  \end{itemize}
  \end{Pro}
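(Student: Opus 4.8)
The plan is to derive parts (a) and (b), as well as the preservation of (sequential) completeness, metrizability, MAP and local quasi-convexity in (c), almost for free from Proposition \ref{descr1} and Lemma \ref{agu1}, via the principle that an arbitrary subgroup inherits metrizability, the MAP property and local quasi-convexity, while a \emph{closed} subgroup additionally inherits (sequential) completeness. The genuinely new points — and where I expect the main obstacle to be — are separability and connectedness, since both fail for $(X^\N,\mathfrak u)$ itself (Proposition \ref{descr1}(c$_1$)); these I would handle through the $\mathfrak u$-density of $X^{(\N)}$ in $c_0(X)$ given by Lemma \ref{agu1}(b).

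For (a): since $c_0(X)$ is a subgroup of $X^\N$, restricting the Hausdorff group topology $\mathfrak u$ of Proposition \ref{descr1}(a) to it yields a Hausdorff group topology $\mathfrak u_0$ with base $\{V^\N\cap c_0(X):V\in\mathcal N(X)\}$ at $0$. For (b), the chain $\mathfrak p_0\le\mathfrak u_0\le\mathfrak b_0$ is just the restriction to $c_0(X)$ of $\mathfrak p\le\mathfrak u\le\mathfrak b$. If $X\ne\{0\}$, pick $x\ne 0$: the sequence $(\nu_k(x))_{k}$ lies in $X^{(\N)}\subseteq c_0(X)$ and, by the argument in the proof of Proposition \ref{descr1}(a$_1$), converges to $0$ in $\mathfrak p_0$ but not in $\mathfrak u_0$, so $\mathfrak p_0\ne\mathfrak u_0$; the converse is trivial. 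For the last assertion of (b), I would restrict the equality $\mathfrak u_0=\mathfrak b_0$ to the subgroup $X^{(\N)}\subseteq c_0(X)$, obtaining $\mathfrak u|_{X^{(\N)}}=\mathfrak b|_{X^{(\N)}}$, whence Proposition \ref{descr1}(a$_2$) gives that a metrizable $X$ is discrete.

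For the routine preservations in (c): by Proposition \ref{descr1}(b), if $X$ is (sequentially) complete, metrizable, MAP or locally quasi-convex, so is $(X^\N,\mathfrak u)$; passing to the subgroup $c_0(X)$, metrizability is inherited by any subspace, the MAP property by any subgroup (restrictions of a point-separating family of characters still separate points), and local quasi-convexity by any subgroup (if $W$ is quasi-convex in $X^\N$ then $W\cap c_0(X)$ is quasi-convex in $c_0(X)$, using the restrictions of the witnessing characters), while (sequential) completeness passes because $c_0(X)$ is $\mathfrak u$-closed in $X^\N$ by Lemma \ref{agu1}(a). Non-discreteness is then clear: $\nu_1\colon X\to c_0(X)$ is a topological embedding onto a closed subgroup, so $c_0(X)$ discrete would force $X$ discrete (equivalently, $V^\N\cap c_0(X)\supsetneq\{\mathbf 0\}$ for every $V\in\mathcal N(X)$ when $X$ is non-discrete).

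Finally, separability and connectedness. For separability, let $D$ be a countable dense subset of $X$ with $0\in D$; I would verify that the countable set $D^{(\N)}$ of sequences over $D$ that are eventually $0$ is $\mathfrak u$-dense in $X^{(\N)}$ — a point of $X^{(\N)}$ has only finitely many nonzero coordinates, and each can be approximated within a given $V\in\mathcal N(X)$ by an element of $D$ — hence $\mathfrak u_0$-dense in $c_0(X)$ by Lemma \ref{agu1}(b). For connectedness, observe that for each $k$ the map $(x_1,\dots,x_k)\mapsto\sum_{i=1}^k\nu_i(x_i)$ is a continuous homomorphism $X^k\to(c_0(X),\mathfrak u_0)$ (each $\nu_i$ being $\mathfrak u$-continuous because $0\in V$ for every $V\in\mathcal N(X)$), so for connected $X$ its image is a connected subset of $c_0(X)$; these images increase with $k$, all contain $\mathbf 0$, and their union is $X^{(\N)}$, which is therefore connected — and so is its $\mathfrak u_0$-closure $c_0(X)$.
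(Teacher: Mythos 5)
Your proposal is correct and follows essentially the same route as the paper: everything except separability and connectedness is pulled down from Proposition \ref{descr1} together with the closedness of $c_0(X)$ in $(X^\N,\mathfrak u)$, and the two remaining properties are obtained from the $\mathfrak u$-density of $X^{(\N)}$ (your $D^{(\N)}$ argument just makes explicit a step the paper leaves implicit, and your continuous images of $X^k$ are exactly the paper's subgroups $G_k$). No gaps.
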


   \begin{proof}  (c)  Assume $X$ is separable.  The density of $X^{(\N)}$ in $c_0(X)$ yields that  $c_0(X)$ is also separable.\\
 If  $(c_0(X),\mathfrak u_0)$ is discrete, for some $V\in \mathcal N(X)$ we   have that $ V^\N\cap c_0(X)={\{\bf 0}\}$. Thus $V=\{0\}$ and  $X$ is discrete. \\
 The rest of (c) (except connectedness), as well as (a,b)  follows from Proposition  \ref{descr1}.

Assume now that $X$ is connected. Consequently, the product spaces $X^n$ are also connected, for all $n \in \N$.
 %For $n \in \N$ denote by
Let
 $$G_n : = \{ {\bf x} \in  c_0(X) : x_k =0,  k= n+1, n+2, \dots  \}$$
 Then
 %$G_n$ is  a subgroup of $c_0(X)$ and
 $(G_n, \mathfrak{u}_{|G_n})$ is topologically isomorphic to $(X^n, \mathfrak{p}_{|X^n})$, therefore
 %. As $X$ is connected, $X^n$ and $G_n $ are also connected. Now, $\bigcap_{n \in \N}G_n \neq \emptyset $, implies that   $ X^{(\N)} = \bigcup_{n \in \N} G_n$ is
 connected.
 Since  $ X^{(\N)} = \bigcup_{n \in \N} G_n$  and $\bigcap_{n \in \N}G_n \neq \emptyset $, we obtain that  that $ (X^{(\N)}, \frak u_{|X^{(\N)}})$  and its closure %is By Lemma \ref{agu1}, we obtain that
  $(c_0(X),\mathfrak u_0)$ are  connected.
     \end{proof}

  \begin{remark}\label{Ro}{\em
 The metric group $(c_0(\mathbb S),d_{\infty})$ was introduced by Rolewicz in \cite{Rol}, where he proves that it is a monothetic group.  As he underlines,  a monothetic and completely metrizable group need not be compact  or discrete, a fact   which  breaks the dichotomy existing in the class of LCA-groups: namely,  a monothetic LCA-group must be  either compact or discrete (\cite[ Lemme 26.2 (p. 96)]{W}; see also \cite[Remark 5]{anz}, where a construction of a different example of a complete metrizable monothetic non-locally compact group is indicated).

A proof of the fact that $(c_0(\mathbb S),\mathfrak{u}_0)$ is monothetic  is contained in \cite[pp. 20--21]{DPS} (cf. also \cite{Gabr}, where it is shown further that $(c_0(\mathbb S),\mathfrak{u}_0)$ is Pontryagin reflexive).}
\end{remark}

   \begin{remark}\label{70ct1}{\em Let $X$ be the group $ \mathbb R $ with the usual topology.
\begin{itemize}
   \item[(1)] By Proposition \ref{descr1} $(\mathbb R^{\mathbb N},\mathfrak{u})$ is a complete metrizable topological abelian group. The group  $(\mathbb R^{\mathbb N},u)$ is not connected; the connected component of the null element coincides with $l_{\infty}$ and the topology $\mathfrak{u}|_{l_{\infty}}$ is the usual Banach-space topology of
 $l_{\infty}$. It follows  that although $\mathbb R^{\mathbb N}$ is a vector space over $\mathbb R$, the topological group $(\mathbb R^{\mathbb N},\mathfrak{u})$ {\bf is not} a topological vector space over $\mathbb R$.
  \item[(2)] By Proposition \ref{descr2m} (c), $(c_0(\mathbb R),\mathfrak{u}_0)$ is a complete separable metrizable \NB connected topological abelian group. Note that  $c_0(\mathbb R)$ is a vector space over $\mathbb R$ and  $(c_0(\mathbb R),\mathfrak{u}_0)$ {\bf is}  a topological vector space over $\mathbb R$. The topology $\mathfrak{u}_0$ is the usual Banach-space topology of $c_0$.
   \item[(3)] It is easy to see that $\mathbb Z^{(\mathbb N)}$ is a closed subgroup of  $(c_0(\mathbb R),\mathfrak{u}_0)$ and the quotient group
 $$
 (c_0(\mathbb R),\mathfrak{u}_0)/\mathbb Z^{(\mathbb N)}
 $$
  is topologically isomorphic with $(c_0(\mathbb S),\mathfrak{u}_0)$.
   %\item[(4)]  It follows from item (3) that the group $(c_0(\mathbb S),\mathfrak{u}_0)$ is connected.
\end{itemize}
 }
   \end{remark}

    For an additive  topological abelian group  $X$ we introduce the following three subgroups of $X^\N$   included between $X^{(\mathbb N)}$ and $c_0(X)$:
  $$
{\rm cs}(X)=\{{\bf x}=(x_n)_{n\in \mathbb N}\in X^{\mathbb N}:\left(\sum_{k=1}^nx_k\right)_{n\in \mathbb N}\text{is\, a\, Cauchy\, sequence\, in }\, X\},
  $$
 $$
  {  \rm ss}(X)=\{{\bf x}=(x_n)_{n\in \mathbb N}\in X^{\mathbb N}:\left(\sum_{k=1}^nx_k\right)_{n\in \mathbb N}\text{is\, a\, convergent\, sequence\, in }\, X\,\}.
  $$
  and
  $$
  { l}(X)=\{{\bf x}=(x_n)_{n\in \mathbb N}\in X^{\mathbb N}:(x_{\sigma(n)})_{n\in \mathbb N}\in {  \rm ss}(X)\,\,\text{for\, every\, bijection}\,\sigma:\N\to\N\,\}.
  $$
  The same notation will be used if $X$ is  a multiplicative topological abelian group: in fact,   these three groups are defined similarly.

Clearly,
$$
 X^{(\mathbb N)}\subset{ l}(X)\subset {  \rm ss}(X)\subset {  \rm cs}(X)\subset c_0(X).
$$
It is easy to observe that for a Hausdorff topological abelian group $X$ the equality $\rm{ss}(X)=\rm{cs}(X)$ holds iff $X$ is sequentially complete.

The notation ${  \rm cs}(X)$ and ${  \rm ss}(X)$ are not standard, while
 $ {l}(X)$ can be justified as follows: usually $ l$ stands for the set of all real absolutely summable sequences and by Riemann-Dirichlet theorem,
\begin{equation}\label{rd1}
 { l}(\R)=\{{\bf x}=(x_n)_{n\in \mathbb N}\in \R^{\mathbb N}:(|x_n|)_{n\in \mathbb N}\in {  \rm ss}(\R)\}= l\,.
\end{equation}
Observe that we also have the following analogue of (\ref{rd1}) for $\S$ (cf. \cite[Ch. VIII.2, Theorem 1 (p.116)]{BourII}):
\begin{equation}\label{rd2}
 { l}(\S)=\{{\bf x}=(x_n)_{n\in \mathbb N}\in \S^{\mathbb N}:(|1-x_n|)_{n\in \mathbb N}\in {  \rm ss}(\R)\}\,.
\end{equation}
It is well known that $l(\R)\ne \rm{ss}(\R)\ne c_0(\R)$.

 Let us consider now the situation  in the general case.  First of all we note that if $X$ has only trivial convergent sequences, then
$$
 X^{(\mathbb N)}={ l}(X)={  \rm ss}(X)={  \rm cs}(X)= c_0(X)\,.
$$
However, for a group $X$ the equality $\rm{cs}(X)= c_0(X)$ need   not imply the equality $X^{(\mathbb N)}=c_0(X)$ as the following proposition shows.

  \begin{Pro}\label{alonso} Let $X$ be a topological abelian group.
\begin{itemize}
   \item[(a)] If $\mathcal N(X)$ admits a base consisting of subgroups of $X$, then ${  \rm cs}(X)= c_0(X)$.
   \item[(b)] If $X$ is sequentially complete and $\mathcal N(X)$ admits a base consisting of subgroups of $X$, then ${ l}(X)={  \rm ss}(X)= c_0(X)$.
   \item[(c)] If $X$ is totally disconnected and locally compact, then ${ l}(X)={  \rm ss}(X)= c_0(X)$.
\end{itemize}
  \end{Pro}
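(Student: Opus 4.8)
The plan is to prove each of the three parts of Proposition \ref{alonso} by exhibiting, for a given null sequence, the required Cauchy or convergence behaviour of its partial sums, exploiting in (a) and (b) the fact that a neighbourhood base of subgroups forces tails of a null sequence to lie inside arbitrarily small subgroups.

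For part (a), I would fix ${\bf x}=(x_n)\in c_0(X)$ and an arbitrary $V\in\mathcal N(X)$; by hypothesis we may take $V$ to be a subgroup of $X$. Since ${\bf x}\to e$, there is $n_0$ with $x_k\in V$ for all $k\ge n_0$. Then for $m>n\ge n_0$ the difference of partial sums $\sum_{k=n+1}^m x_k$ is a sum of elements of the subgroup $V$, hence lies in $V$. This shows $(\sum_{k=1}^n x_k)_n$ is Cauchy, i.e. ${\bf x}\in\mathrm{cs}(X)$; combined with the always-valid inclusion $\mathrm{cs}(X)\subset c_0(X)$, we get $\mathrm{cs}(X)=c_0(X)$.

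For part (b), sequential completeness upgrades ``Cauchy'' to ``convergent'', so by (a) we immediately get $\mathrm{ss}(X)=\mathrm{cs}(X)=c_0(X)$; the remaining point is $l(X)=c_0(X)$, for which I would simply observe that permuting a null sequence produces another null sequence (rearrangement does not affect the property $x_n\to e$, since the latter says all but finitely many terms lie in any given neighbourhood), so every ${\bf x}\in c_0(X)$ has every rearrangement in $c_0(X)=\mathrm{ss}(X)$, whence ${\bf x}\in l(X)$. For part (c), the standard structure theory of totally disconnected locally compact groups (van Dantzig's theorem) gives a neighbourhood base at $e$ consisting of compact open subgroups; compact groups are complete, and a locally compact group with such a base is in particular sequentially complete, so part (b) applies and yields $l(X)=\mathrm{ss}(X)=c_0(X)$.

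The only genuinely delicate point is making sure the quoted facts are used correctly: that a neighbourhood base of subgroups really does let us conclude the partial-sum increments stay in the subgroup (this is immediate, the subgroups being closed under the group operation), and, in (c), that van Dantzig's theorem applies and that the resulting base of compact open subgroups makes $X$ sequentially complete so that (b) can be invoked. I expect no substantial obstacle; the argument is essentially a transcription of the classical real/complex case, with ``small subgroup'' replacing ``small ball'' and absorbing the role of the triangle inequality.
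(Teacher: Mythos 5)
Your proposal is correct and follows essentially the same route as the paper, which merely states that (a) is easy to verify, that (b) follows from (a) via sequential completeness (with the rearrangement observation for $l(X)$), and that (c) reduces to (b) by the van Dantzig--type theorem \cite[Theorem II.7.7]{HR} giving a base of compact open subgroups, locally compact groups being complete. You have simply filled in the details the paper omits.
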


\begin{proof} $(a)$ is   easy to verify and $(b)$ follows from $(a)$. Finally, $(c)$ follows from $(b)$, since our hypothesis implies that $\mathcal N(X)$ admits a basis consisting of subgroups of $X$ (\cite[Theorem II.7.7 (p. 62)]{HR}). \end{proof}

We shall see  (Remark \ref{bourEr}) that if a non-trivial group $X$ is either connected and metrizable or connected and locally compact, then ${  \rm ss}(X)\ne c_0(X)$. {\it It is not clear whether for a complete metrizable  abelian group $X$ the equality ${ l}(X)={  \rm ss}(X)$  implies the equality ${  \rm ss}(X)= c_0(X)$}.

%%%%%%%%%%%%%%%%%%%%%%%%%%%%%%%%%%%%%%%%%%%%
\section{The  $\beta$-dual  of a group of sequences }

%\section{The \NB $\beta$-dual $A^\beta$ for a group of sequences $A$}

 For  topological abelian groups  $X,Y$ and a non-empty $A\subset X^{\mathbb N}$  we write:
$$
   A^{\beta}(Y)=\{{\bf h}=(\xi_n)_{n\in \mathbb N}\in (CHom(X,Y))^{\mathbb N}: (\xi_n(x_n))_{n\in \mathbb N}\in ss(Y)\quad\forall {\bf x}=(x_n)_{n\in \mathbb N}\in A\}.
    $$
  The notation $A^{\beta}(Y)$ is taken from the theory of sequence spaces, where it is used for $X=Y=\R$ (see, e.g.,\cite{BeKa}).
  \par
  Instead of $A^{\beta}(\S)$ we will use the shorter notation $A^{\beta}$. In other words,
   $$
    A^{\beta}=\{{\bf h}=(\xi_n)_{n\in \mathbb N}\in (X^{\wedge})^{\mathbb N}: (\xi_n(x_n))_{n\in \mathbb N}\in ss(\S)\quad\forall {\bf x}=(x_n)_{n\in \mathbb N}\in A\}.
    $$
  Clearly, $A^{\beta}$ is a subgroup of $(X^{\wedge})^{\mathbb N}$ containing $(X^{\wedge})^{(\mathbb N)}$.
  \NB If $A$ is a subgroup, we will  call    $A^{\beta}$ the $\beta$-dual of $A$.
\par
Let $A\subset X^{\mathbb N}$ be a subgroup and $\tau$ a group topology in $A$. The main motivation to introduce %$A^\beta$
\NB the $\beta$-dual  comes from the fact that,
under appropriate hypotheses, $(A,\tau)^{\wedge}$ is canonically isomorphic with  $A^{\beta}$. Now we describe several cases when this occurs, being $A : = c_0(X)$ the most important of them (See next section).
%Proposition \ref{bolo2oq1}).

\begin{lemma}\label{6apr1}\label{tol2} Let $X$ be a topological abelian group. Then
\begin{equation}\label{tol1}
\left( X^{(\N)} \right)^{\beta}=(X^{\wedge})^{\mathbb N}\;\mbox{ and }\;\left( X^{\N} \right)^{\beta}=(X^{\wedge})^{(\mathbb N)}\,.
\end{equation}
\end{lemma}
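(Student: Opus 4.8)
The plan is to prove the two equalities in \eqref{tol1} by unwinding the definitions of the $\beta$-dual and of the subgroups $X^{(\N)}\subset X^\N$, exploiting that convergence of a \emph{series} in $\S$ is determined by the behaviour of its tails.

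For the first equality, $\left(X^{(\N)}\right)^\beta=(X^\wedge)^\N$, one inclusion is immediate from the general remark already recorded after the definition of $A^\beta$: since $X^{(\N)}$ is a subgroup, $\left(X^{(\N)}\right)^\beta$ is a subgroup of $(X^\wedge)^\N$, so $\subseteq$ is free. For the reverse inclusion, take any ${\bf h}=(\xi_n)_{n\in\N}\in (X^\wedge)^\N$ and any ${\bf x}=(x_n)_{n\in\N}\in X^{(\N)}$. By definition of $X^{(\N)}$ there is $N$ with $x_n=e$ for all $n>N$, hence $\xi_n(x_n)=1$ for $n>N$, so the partial sums $\prod_{k=1}^m\xi_k(x_k)$ are eventually constant (equal to $\prod_{k=1}^N\xi_k(x_k)$) and therefore trivially form a convergent sequence in $\S$; thus $(\xi_n(x_n))_{n\in\N}\in\mathrm{ss}(\S)$. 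Since ${\bf x}$ was arbitrary, ${\bf h}\in\left(X^{(\N)}\right)^\beta$.

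For the second equality, $\left(X^\N\right)^\beta=(X^\wedge)^{(\N)}$, the inclusion $\supseteq$ follows from the first equality (or directly): if ${\bf h}=(\xi_n)_{n\in\N}\in(X^\wedge)^{(\N)}$, then $\xi_n=1$ for all $n$ large, so for \emph{any} ${\bf x}\in X^\N$ the sequence $\xi_n(x_n)$ is eventually $1$ and the partial products are eventually constant, hence convergent; thus ${\bf h}\in\left(X^\N\right)^\beta$. The substantive direction is $\subseteq$: suppose ${\bf h}=(\xi_n)_{n\in\N}\in(X^\wedge)^\N$ is \emph{not} eventually trivial, so there is a strictly increasing sequence $(n_k)_{k\in\N}$ and elements $y_k\in X$ with $\xi_{n_k}(y_k)\neq 1$. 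I then want to build a single sequence ${\bf x}=(x_n)_{n\in\N}\in X^\N$ witnessing the failure of $(\xi_n(x_n))_{n\in\N}\in\mathrm{ss}(\S)$, i.e. so that $\left(\prod_{j=1}^m\xi_j(x_j)\right)_{m\in\N}$ does not converge in $\S$. The natural choice: put $x_n=e$ for $n\notin\{n_k\}$, and on the subsequence $n_k$ choose $x_{n_k}$ among the integer multiples $m\,y_k$ so that $\xi_{n_k}(x_{n_k})=\xi_{n_k}(y_k)^m$ is pushed to lie in a fixed closed arc of $\S$ bounded away from $1$ — which is possible because the subgroup $\langle \xi_{n_k}(y_k)\rangle$ of $\S$ is nontrivial, hence either finite (and then some power equals a fixed nontrivial value) or dense in $\S$ (and then a suitable power lands in any prescribed arc). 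With such a choice the partial products fail to be Cauchy: infinitely often a single new factor perturbs the product by an amount bounded below, so $(\prod_{j=1}^m\xi_j(x_j))_m$ is not convergent, contradicting ${\bf h}\in\left(X^\N\right)^\beta$. Hence ${\bf h}\in(X^\wedge)^{(\N)}$.

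The main obstacle is the last construction: one must arrange, uniformly over $k$, that the extra factor $\xi_{n_k}(x_{n_k})$ stays in a fixed arc away from $1$ (say in $\{s\in\S:|1-s|\ge\delta\}$ for a fixed $\delta>0$), so that the Cauchy criterion for the series visibly fails. The clean way is the case split on the cyclic subgroup generated by $\xi_{n_k}(y_k)$ in $\S$ (finite versus dense), picking the exponent $m=m_k$ accordingly; everything else is a routine unwinding of definitions. I would present the short $\supseteq$ directions first and spend the bulk of the argument on this $\subseteq$ direction of the second equality.
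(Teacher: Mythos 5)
Your proof is correct and follows essentially the same route as the paper: both reduce to a subsequence of nontrivial characters, choose witnesses whose images are uniformly bounded away from $1$ in $\S$, pad with the identity elsewhere, and conclude that the partial products fail the Cauchy criterion. The only cosmetic difference is how the uniform bound is obtained — the paper invokes directly that $\S_+$ contains no nontrivial subgroup of $\S$ (applied to $\xi_{k_n}(X)$), whereas you derive the same conclusion for the cyclic subgroup $\langle\xi_{n_k}(y_k)\rangle$ via the finite/dense dichotomy; both are valid.
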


\begin{proof} The first equality in (\ref{tol1}) is trivial. To prove the second one  it is sufficient to  show that $\left( X^{\N} \right)^{\beta}\subset (X^{\wedge})^{(\mathbb N)}\,.$ Take
\begin{equation}\label{tol4}
{\bf h}=(\xi_n)_{n\in \mathbb N}\in \left( X^{\N} \right)^{\beta}
\end{equation}
and let us see that the assumption
\begin{equation}\label{tol5}
{\bf h}=(\xi_n)_{n\in \mathbb N}\not\in (X^{\wedge})^{(\mathbb N)}
\end{equation}
  leads to a contradiction with (\ref{tol4}).\\
From (\ref{tol5}) we get the existence of a  strictly increasing sequence $(k_n)_{n\in \N}$ of natural numbers such that
\begin{equation}\label{tol6}
\xi_{k_n}(X)\ne\{1\},\,n=1,2,\dots
\end{equation}
 Since  $\S_+$ {\it contains no nontrivial subgroup of} $\S$, from (\ref{tol6}) we get the existence of a  sequence $(x_{k_n})_{n\in \N}$ of elements of $X$ such that
\begin{equation}\label{tol7}
\xi_{k_n}(x_{k_n})\not\in \S_+,\,n=1,2,\dots
\end{equation}
Put $x_j=0$ for all $j\in \N\setminus \{k_1,k_2,\dots\}$. For the sequence ${\bf x}=(x_n)_{n\in \mathbb N}\in X^{\N}$ obtained in this way,  the sequence
$(\prod_{k=1}^{n}\xi_k(x_k))_{n\in \mathbb N}$ converges in $\mathbb S\,$ by virtue of (\ref{tol4}).   This implies that $\lim_n\xi_n(x_n)=1$. Hence,
$\lim_n\xi_{k_n}(x_{k_n})=1$. However, as $\S_+\in \mathcal N(\S)$, the equality $\lim_n\xi_{k_n}(x_{k_n})=1$ contradicts (\ref{tol7}).
\end{proof}

In the following assertion an additive abelian group $A\subset X^{\mathbb N}$ is treated as a $\Z^\N$-module.

\begin{lemma}\label{4oct1}
Let $X$ be a topological abelian group, $A\subset X^{\mathbb N}$ a non-empty subset having the property:
\begin{equation}\label{6oct1}
\{0,1\}^{\N}A\subset A\,,
\end{equation}
$(\xi_{n})_{n\in \mathbb N}\in A^{\beta}$ and $(k_n)_{n\in \N}$ a strictly increasing sequence of natural numbers. Then
$(\xi_{k_n})_{n\in \mathbb N}\in A^{\beta}$.
% $(\xi_{k_n})_{n\in \mathbb N}\in c_0(X)^\beta$.
\end{lemma}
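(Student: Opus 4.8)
The plan is to check the membership $(\xi_{k_n})_{n\in\mathbb N}\in A^{\beta}$ straight from the definition of the $\beta$-dual: I fix an arbitrary ${\bf x}=(x_n)_{n\in\mathbb N}\in A$ and aim to show that the partial products $\left(\prod_{m=1}^{N}\xi_{k_m}(x_m)\right)_{N\in\mathbb N}$ converge in $\S$, i.e.\ that $(\xi_{k_m}(x_m))_{m\in\mathbb N}\in ss(\S)$. The only structural tool available is hypothesis \eqref{6oct1}, which says $A$ is stable under coordinatewise masking by $\{0,1\}$-sequences, so the proof has to be built entirely around this.

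First I would use the masking. Given ${\bf x}\in A$, put $\varepsilon=(\varepsilon_j)_{j\in\mathbb N}\in\{0,1\}^{\mathbb N}$ with $\varepsilon_j=1$ exactly when $j\in\{k_m:m\in\mathbb N\}$, and set ${\bf x}'=\varepsilon{\bf x}$; by \eqref{6oct1}, ${\bf x}'\in A$, and its $j$-th entry is $x_j$ when $j=k_m$ for some $m$ and the neutral element $e$ otherwise. Since $(\xi_n)_{n}\in A^{\beta}$, the sequence $(\xi_n(x'_n))_{n}$ lies in $ss(\S)$; but $\xi_n(x'_n)=1$ unless $n=k_m$, in which case it equals $\xi_{k_m}(x_{k_m})$. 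Thus
\[
\prod_{n=1}^{N}\xi_n(x'_n)=\prod_{\{m\,:\,k_m\le N\}}\xi_{k_m}(x_{k_m}),
\]
and convergence of the left-hand side as $N\to\infty$ — along the subsequence $N=k_M$ — gives that $(\xi_{k_m}(x_{k_m}))_{m\in\mathbb N}\in ss(\S)$ for every ${\bf x}\in A$.

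The hard part will be the last step: converting ``$(\xi_{k_m}(x_{k_m}))_m\in ss(\S)$ for all ${\bf x}\in A$'' into ``$(\xi_{k_m}(y_m))_m\in ss(\S)$ for all ${\bf y}\in A$'', i.e.\ aligning the value-index $m$ with the character-index $k_m$. I would handle this by \emph{spreading}: given ${\bf y}\in A$, form the sequence $\widetilde{\bf y}$ that carries $y_m$ in slot $k_m$ and $e$ in every other slot. For the subgroups of $X^{\mathbb N}$ actually used here --- $X^{\mathbb N}$, $X^{(\mathbb N)}$, and especially $c_0(X)$ (a null sequence stays null after spreading, since $k_m\to\infty$; cf.\ Lemma \ref{agu1}) --- one has $\widetilde{\bf y}\in A$, and the restriction of $\widetilde{\bf y}$ to the positions $\{k_m\}$ is exactly ${\bf y}$. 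Applying the convergence statement of the previous paragraph to ${\bf x}=\widetilde{\bf y}$ then yields $(\xi_{k_m}(y_m))_m\in ss(\S)$, and since ${\bf y}$ was arbitrary, $(\xi_{k_n})_{n\in\mathbb N}\in A^{\beta}$. (If instead one reads the conclusion as ``the sequence equal to $\xi_{k_m}$ in slot $k_m$ and to $1$ elsewhere belongs to $A^{\beta}$'', then it is literally the displayed convergence above, and \eqref{6oct1} alone suffices.)
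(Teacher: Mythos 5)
Your construction is, at its core, the paper's own proof: the paper takes ${\bf x}\in A$, forms exactly your ``spread'' sequence $\widetilde{\bf x}$ (with $x_n$ in slot $k_n$ and the neutral element elsewhere), asserts $\widetilde{\bf x}\in A$, and reads off the conclusion from $(\xi_n)_{n}\in A^{\beta}$ applied to $\widetilde{\bf x}$ via the telescoping identity $\prod_{j=1}^{k_n}\xi_j(\widetilde x_j)=\prod_{j=1}^{n}\xi_{k_j}(x_j)$. Two remarks. First, your preliminary masking step is logically redundant: once $\widetilde{\bf y}\in A$ is granted, applying the definition of $A^{\beta}$ directly to $\widetilde{\bf y}$ already yields $(\xi_{k_m}(y_m))_m\in ss(\S)$, so the first paragraph can be deleted (though it does usefully isolate exactly what hypothesis \eqref{6oct1} buys on its own). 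Second --- and this is the substantive point --- your hesitation about whether spreading follows from \eqref{6oct1} is well founded, and here you are more careful than the paper, whose proof claims ``${\bf y}\in A$ by \eqref{6oct1}'' even though multiplication by a $\{0,1\}$-sequence acts coordinatewise and cannot relocate $x_n$ to slot $k_n$. Indeed, for an arbitrary non-empty $A$ satisfying only \eqref{6oct1} the lemma is false: in $\S^{\mathbb N}$ take ${\bf a}=(a_n)$ with $a_{2j-1}=i$ and $a_{2j}=-1$, let $A=\{\varepsilon{\bf a}:\varepsilon\in\{0,1\}^{\mathbb N}\}$ (which satisfies \eqref{6oct1}), let $\xi_{2j-1}=1$ and $\xi_{2j}=\varphi_2$ (the squaring character), and let $k_n=2n$; then $\xi_n(x_n)=1$ for every ${\bf x}\in A$ and every $n$, so $(\xi_n)_n\in A^{\beta}$, while $\prod_{j=1}^{n}\xi_{k_j}(a_j)=\prod_{j=1}^{n}a_j^{2}$ oscillates between $\pm1$, so $(\xi_{k_n})_n\notin A^{\beta}$. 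Since the lemma is only ever invoked for $A=c_0(X)$, which is closed under spreading because $k_n\to\infty$, your restricted proof covers every use made of it in the paper; the clean repair of the general statement is to replace \eqref{6oct1} by the hypothesis that $A$ is stable under spreading along strictly increasing index sequences.
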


\begin{proof} Take an arbitrary ${\bf x}=(x_n)_{n\in \mathbb N}\in A$ and define ${\bf y}=(y_n)_{n\in \mathbb N}\in X^{\mathbb N}$ as follows: $y_{k_n}=x_n,\,n=1,2,\dots$ and $y_j=0,\,\forall j\in\mathbb N\setminus \{k_1,k_2,\dots\}$. Then ${\bf y}=(y_n)_{n\in \mathbb N}\in A$ by (\ref{6oct1}).\\
Since   ${\bf y}=(y_n)_{n\in \mathbb N}\in A$ and $(\xi_n)_{n\in \mathbb N}\in A^{\beta}$, the sequence
$$
 \left( \prod_{k=1}^n\xi_k(y_k) \right)_{n\in \mathbb N}
 $$
 converges in $\mathbb S$. Observe that
 \begin{equation}\label{eqag1}
 \prod_{j=1}^{k_n}\xi_j(y_j)=\prod_{j=1}^{n}\xi_{k_j}(x_j),\,\,n=1,2,\dots
 \end{equation}
 From  (\ref{eqag1}) we conclude that the sequence
 $$
 \left( \prod_{j=1}^{n}\xi_{k_j}(x_j)\right)_{n\in \mathbb N}
 $$
 converges in $\mathbb S$ too. Since this is true for an arbitrary  ${\bf x}=(x_n)_{n\in \mathbb N}\in A$, we get  that $(\xi_{k_n})_{n\in \mathbb N}\in A^{\beta}$.
\end{proof}

 For a topological abelian group  $X$,  a subset $A\subset X^{\mathbb N}$   and for   a fixed ${\bf h}=(\xi_n)_{n\in \mathbb N} \in  A^{\beta}$ we define a mapping $\chi_{\bf h}:A\to \mathbb S$ by the equality:
 $$
 \chi_{\bf h}({\bf x})=\prod_{n=1}^{\infty}\xi_n(x_n):=\lim_n\prod_{k=1}^{n}\xi_k(x_k),\,\,{\bf x}=(x_n)_{n\in \mathbb N}\in A\,.
 $$
 It is easy to observe that
 \begin{itemize}
 \item if ${\bf h}=(\xi_n)_{n\in \mathbb N}\in (X^{\wedge})^{(\mathbb N)}$, then $\chi_{\bf h}$ is defined on the whole $X^{\N}$.
 \item If $A$ is a subgroup of $X^{\mathbb N}$, then
$$
 \chi_{\bf h}\in Hom\left(A,\mathbb S\right),\quad \forall {\bf h}\in
 A^{\beta}\,.
 $$
   \end{itemize}
  \begin{notation}{\em
 For a subgroup $A\subset X^{\mathbb N}$, the letter $\chi$ will denote  in the sequel the mapping
$$
\chi:A^{\beta}\to Hom\left(A,\mathbb S\right)
$$
 defined by the equality:
$$
\chi({\bf h})=\chi_{\bf h}\,\quad \forall {\bf h}=(\xi_n)_{n\in \mathbb N} \in  A^{\beta}\,.
$$
Of course, the mapping $\chi$ depends on $A$, but, to simplify notation, we shall not indicate this dependence.
}
    \end{notation}

   \begin{lemma}\label{6apr}
 Let $X$ be a topological abelian group and $A\subset X^{\mathbb N}$ a subgroup such that $X^{(\mathbb N)}\subset A$. Then, the mapping $\chi:A^{\beta}\to Hom\left(A,\mathbb S\right)$ is an injective group homomorphism.
 \end{lemma}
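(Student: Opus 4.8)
The plan is to establish the two defining properties of a group homomorphism — well-definedness together with multiplicativity — and then verify injectivity. That $\chi$ is a homomorphism from $A^{\beta}$ to $\mathrm{Hom}(A,\mathbb S)$ has essentially already been observed in the bulleted remarks preceding the statement: for ${\bf h}=(\xi_n)_{n\in\mathbb N}\in A^{\beta}$ the partial products $\prod_{k=1}^n\xi_k(x_k)$ converge in $\mathbb S$ for every ${\bf x}\in A$, so $\chi_{\bf h}$ is a well-defined map $A\to\mathbb S$; since $A$ is a subgroup and each $\xi_n$ is a homomorphism, passing to the limit in $\prod_{k=1}^n\xi_k(x_k+x'_k)=\bigl(\prod_{k=1}^n\xi_k(x_k)\bigr)\bigl(\prod_{k=1}^n\xi_k(x'_k)\bigr)$ shows $\chi_{\bf h}\in\mathrm{Hom}(A,\mathbb S)$. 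Finally $\chi$ itself respects the group operations: for ${\bf h}=(\xi_n)$, ${\bf h}'=(\xi'_n)$ in $A^{\beta}$ one has $\chi_{{\bf h}{\bf h}'}({\bf x})=\lim_n\prod_{k=1}^n(\xi_k\xi'_k)(x_k)=\lim_n\prod_{k=1}^n\xi_k(x_k)\cdot\lim_n\prod_{k=1}^n\xi'_k(x_k)=\chi_{\bf h}({\bf x})\chi_{{\bf h}'}({\bf x})$, using that both limits exist so the limit of the product is the product of the limits.

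The substantive point is injectivity, and here is where I would use the hypothesis $X^{(\mathbb N)}\subset A$ in an essential way. Suppose ${\bf h}=(\xi_n)_{n\in\mathbb N}\in A^{\beta}$ with $\chi_{\bf h}=1$, i.e. $\chi_{\bf h}({\bf x})=1$ for every ${\bf x}\in A$. Fix $m\in\mathbb N$ and an arbitrary $x\in X$; then $\nu_m(x)\in X^{(\mathbb N)}\subset A$, and for this sequence all but the $m$-th coordinate vanish, so the partial products stabilize and $\chi_{\bf h}(\nu_m(x))=\xi_m(x)$. Hence $\xi_m(x)=1$ for all $x\in X$, that is $\xi_m=1$ in $X^{\wedge}$. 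As $m$ was arbitrary, ${\bf h}$ is the identity of $A^{\beta}$, so $\chi$ is injective.

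I do not anticipate a serious obstacle; the only place requiring a little care is making sure the elementary limit manipulations are legitimate, but convergence of all the relevant partial-product sequences is guaranteed by the definition of $A^{\beta}$ (and by membership of $\nu_m(x)$ in $A$, which in turn rests precisely on $X^{(\mathbb N)}\subset A$), so the interchange of limit and finite product in the multiplicativity computation is automatic. The evaluation $\chi_{\bf h}(\nu_m(x))=\xi_m(x)$ — the crux of injectivity — is immediate because only finitely many (indeed one) factors differ from $1$. Thus the proof is a short, clean verification, with the hypothesis $X^{(\mathbb N)}\subset A$ supplying exactly the test elements needed to separate the coordinates of ${\bf h}$.
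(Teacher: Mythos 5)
Your proof is correct and follows essentially the same route as the paper's: the homomorphism property is the routine limit manipulation (which the paper dismisses as "easy to see"), and injectivity is obtained exactly as in the paper by evaluating a kernel element $\chi_{\bf h}$ at the test sequences $\nu_m(x)\in X^{(\mathbb N)}\subset A$ to conclude $\xi_m=1$ for every $m$.
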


 \begin{proof} It is easy to see that $\chi$ is a group homomorphism. Let ${\bf h}=(\xi_n)_{n\in \mathbb N}\in \ker(\chi) $. Then
  $$
  \chi_{\bf h}({\bf x})=1,\quad \forall {\bf x}=(x_n)_{n\in \mathbb N}\in A.
  $$
  Fix $n\in\mathbb N$ and $x\in X$. As $\nu_n(x)\in X^{(\mathbb N)}\subset A$, we get: $\xi_n(x)=\chi_{\bf h}(\nu_n(x))=1$. Since $x\in X$ is arbitrary,  $\xi_n$ must be the null  character. Therefore, ${\bf h}={ 1}:=(1,1,\dots)$ and $\ker(\chi)=\{{1}\}$.  Hence, $\chi$ is injective.
 \end{proof}

%In the sequel we shall need the following:
%
% \begin{Cor}\label{30Oct}
% Let $X$ be a topological abelian group. Then, the mapping $\chi:c_0(X)^{\beta}\to Hom\left(c_0(X),\mathbb S\right)$ is an injective group homomorphism.
% \end{Cor}

 The following example illustrate the usefulness of the non-topological Lemma \ref{6apr1}.

\begin{example}\label{kapl} {\em  It is a well-known fact, that} { if $X$ is a topological abelian group, then
\begin{equation}\label{3octeq1}
 \chi_{\bf h}\in \left(X^{\N},\mathfrak{p} \right)^{\wedge}\quad\forall {\bf h}=(\xi_n)_{n\in \mathbb N}\in (X^{\wedge})^{(\mathbb N)}=(X^{\N})^{\beta}\,
\end{equation}
  and the mapping   $\chi:(X^{\wedge})^{(\mathbb N)}\to \left( X^{\N},\mathfrak{p} \right)^{\wedge}$ is a group isomorphism.}

{\em  Indeed, (\ref{3octeq1}) is easy to verify. The injectivity of $\chi$ derives from Lemma \ref{6apr}. It remains to show that $\chi$ is surjective too.
\par
Write $G=\left( X^{\N},\mathfrak{p} \right)$ and fix an arbitrary $\kappa\in G^{\wedge}$. We need to find ${\bf h}=(\xi_n)_{n\in \mathbb N}\in (X^{\wedge})^{(\mathbb N)}$
 such that $\kappa=\chi_{\bf h}$. For every $n\in \mathbb N$ the  homomorphism $\nu_n:X\to G$ is continuous. Hence $\xi_n:=\kappa\circ \nu_n\in X^{\wedge}$. Let us see that  ${\bf h}:=(\xi_n)_{n\in \mathbb N}$ meets the requirements.

 Fix ${\bf x}=(x_n)_{n\in \mathbb N}\in X^{\N}$. Evidently,   the sequence $(\sum_{k=1}^n \nu_k(x_k))_{n\in \mathbb N}$ converges in $G$ to ${\bf x}=(x_n)_{n\in \mathbb N}$ . Hence,
 \begin{equation}\label{eq29oc}
  \kappa({\bf x})=\lim_n\kappa(\sum_{k=1}^n\nu_k(x_k) )=\lim_n\prod_{k=1}^n\xi_k(x_k)\,.
 \end{equation}
  Since ${\bf x}=(x_n)_{n\in \mathbb N}\in  X^{\N}$ is arbitrary, (\ref{eq29oc}) implies that ${\bf h}=(\xi_n)_{n\in \mathbb N}\in \left(X^{\N}\right)^{\beta}$ and  $\kappa=\chi_{\bf h}$. By Lemma \ref{6apr1},   $\left( X^{\N} \right)^{\beta}=(X^{\wedge})^{(\mathbb  N)}\,.$  Consequently we have found ${\bf h}=(\xi_n)_{n\in \mathbb N}\in (X^{\wedge})^{(\mathbb N)}$ such that $\kappa=\chi_{\bf h}$, and  the surjectivity of $\chi$ is thus proved.
}
\end{example}

 \begin{remark}{\em Let $X$ be a topological abelian group.
\begin{itemize}
    \item[(a)] Example \ref{kapl} asserts only that the group $\left(X^{\N},\mathfrak{p} \right)^{\wedge}$ can be algebraically identified with the group $(X^{\wedge})^{(\mathbb N)}$ by means of the group isomorphism $\chi$. In fact more is known:   $\chi$ is also a homeomorphism  between $\left( X^{\N},\mathfrak{p}
\right)^{\wedge}_{\rm{co}}$ and $\left((X^{\wedge})^{(\mathbb N)},\hat{\mathfrak{b}}\right)$, where $\hat{\mathfrak{b}}$ stands
for the topology  induced from the box product $\left((X^{\wedge}_{\rm{co}})^{\mathbb N},\mathfrak{b}
\right)$ in $(X^{\wedge})^{(\mathbb N)}$.
   \item[(b)] An application of $(a)$ for $X=\Z$ gives that $\left({\S}^{(\N)}, \mathfrak{b}|_{{\S}^{(\N)}}\right)$ is a
complete non-metrizable group. In particular, we get that ${\S}^{(\N)}$ is a {\it closed} subgroup of $\left({\S}^{\N},
\mathfrak{b}\right)$.

   \item[(c)]  It is known also that $\left({X}^{(\N)}, \mathfrak{b}|_{{X}^{(\N)}}\right)^{\wedge}_{\rm{co}}$ is topologically isomorphic with $\left((X^{\wedge}_{\rm{co}})^{\mathbb
N},\mathfrak{p}\right)$.
   \item[(d)] An application of $(c)$ for $X=\S$ gives that the group $\left({\S}^{(\N)}, \mathfrak{b}|_{{\S}^{(\N)}}\right)^{\wedge}$ has
cardinality $\cont$. It follows that $\mathfrak{b}|_{{\S}^{(\N)}}$ {\it is not} a compatible topology for $\left({\S}^{(\N)}, \mathfrak{p}|_{{\S}^{(\N)}}\right)$ (cf. Proposition \ref{Maincorollary}).
\end{itemize}
 }
 \end{remark}
\section{ The topological dual of $(c_0(X), \mathfrak{u}_0)$. Coincidence with the  $\beta$-dual}

In this section we will prove that for a complete
metrizable gruoup $X$, the dual of the topological group $(c_0(X),
\mathfrak{u}_0)$ algebraically coincides with the $\beta$-dual. We
start  calculating the $\beta$-dual for the particular group
$c_0(\S)$, which has interest in itself: in fact, from it we derive
the first example of a metrizable locally quasi-convex group which
is not $\rm{LQC}$-Mackey (See Proposition \ref{Maincorollary}).

The following statement is a bit more delicate than Lemma \ref{6apr1}.

\begin{Pro}\label{bolo2oq1}
For $c_0(\S)$ we have:

\begin{equation}\label{deeq}
c_0(\S)^{\beta}=({\S}^{\wedge})^{(\mathbb N)}\,.
\end{equation}
\end{Pro}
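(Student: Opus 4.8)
We already know from Lemma~\ref{6apr1} that $\left(\S^\N\right)^\beta = (\S^\wedge)^{(\N)}$ and that $(\S^\wedge)^{(\N)} \subset c_0(\S)^\beta$ always holds, since $X^{(\N)} \subset c_0(X) \subset X^\N$ reverses under $\beta$-duality. So the content of \eqref{deeq} is the nontrivial inclusion $c_0(\S)^\beta \subset (\S^\wedge)^{(\N)}$: we must show that if ${\bf h} = (\xi_n)_{n\in\N}$ is a sequence of characters of $\S$ with $(\prod_{k=1}^n \xi_k(x_k))_n$ convergent for \emph{every} ${\bf x} \in c_0(\S)$, then all but finitely many $\xi_n$ are trivial. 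Recall that $\S^\wedge \cong \Z$, so each $\xi_n$ is ``raising to the power $m_n$'' for some $m_n \in \Z$, and $\xi_n$ trivial means $m_n = 0$. Thus the statement to prove is: if $(m_n)$ is an unbounded-support integer sequence — i.e. $m_n \ne 0$ for infinitely many $n$ — then there is a null sequence $(x_n)$ in $\S$ for which $\prod_{k=1}^n x_k^{m_k}$ fails to converge.

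The plan is to argue by contradiction: assume infinitely many $\xi_{k_n}$ are nontrivial, i.e. $|m_{k_n}| \ge 1$. By Lemma~\ref{4oct1} (applied with $A = c_0(\S)$, which satisfies $\{0,1\}^\N A \subset A$) it suffices to derive a contradiction from the subsequence $(\xi_{k_n})_n \in c_0(\S)^\beta$ with all terms nontrivial; so relabel and assume $m_n \ne 0$ for all $n$. The key point, which is what makes this ``more delicate than Lemma~\ref{6apr1}'', is that we can no longer just pick \emph{arbitrary} $x_n$ with $\xi_n(x_n) \notin \S_+$ — we must choose $x_n \to 1$ (a \emph{null} sequence), while still sabotaging convergence of the partial products. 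The idea is to choose $x_n$ very close to $1$ so that $\xi_n(x_n) = x_n^{m_n}$ still moves the partial product by a non-negligible amount: writing $x_n = e^{2\pi i t_n}$ with $t_n \to 0$, we have $\xi_n(x_n) = e^{2\pi i m_n t_n}$, and we are free to choose the real $t_n$ small in absolute value but with $m_n t_n$ prescribed modulo $1$ — e.g. choose $|t_n| \le 1/n$ and arrange $m_n t_n \equiv 1/3 \pmod 1$ (possible once $|m_n| \ge $ something, after passing to a further subsequence if needed, and padding with zeros elsewhere as in Lemma~\ref{4oct1}). Then $x_n \to 1$ so ${\bf x} = (x_n) \in c_0(\S)$, but $\xi_n(x_n) = e^{2\pi i/3}$ does not tend to $1$, so $(\prod_{k=1}^n \xi_k(x_k))_n$ cannot converge — contradiction.

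The main obstacle — and the place to be careful — is reconciling ``$x_n \to 1$'' with ``$\xi_n(x_n)$ bounded away from $1$.'' This works precisely because $\xi_n$ is a \emph{nontrivial} character of the circle and hence not injective on small neighbourhoods of $1$: for any neighbourhood $W$ of $1$ in $\S$ and any nontrivial $\xi \in \S^\wedge$, we have $\xi(W) = \S$ (a nontrivial character is surjective on every neighbourhood of the identity). This is the group-theoretic fact that replaces the remark ``$\S_+$ contains no nontrivial subgroup of $\S$'' used in Lemma~\ref{6apr1}. Concretely: given $n$, since $\xi_n$ is nontrivial there is $x_n$ with $\rho(x_n, 1) \le 1/n$ (or $x_n \in W_n$ for a fixed decreasing neighbourhood base $(W_n)$) and $\xi_n(x_n) = e^{2\pi i/3} \notin \S_+$; set $x_j = 1$ for the indices outside the chosen subsequence. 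One should double-check that a single fixed ``bad'' value like $e^{2\pi i/3}$, not tending to $1$, already obstructs convergence of the partial products — which it does, since convergence of $\prod \xi_k(x_k)$ forces $\xi_n(x_n) \to 1$ as in the last lines of the proof of Lemma~\ref{6apr1}. Finally invoke Lemma~\ref{4oct1} once more if a subsequence was extracted, to conclude that the original ${\bf h}$ could not have been in $c_0(\S)^\beta$ unless cofinitely many $\xi_n$ are trivial, i.e. ${\bf h} \in (\S^\wedge)^{(\N)}$.
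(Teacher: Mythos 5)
Your overall framing is right (identify $\S^{\wedge}$ with $\Z$, reduce via Lemma~\ref{4oct1} to the case where every $\xi_n=\varphi_{m_n}$ has $m_n\neq 0$, then build a null sequence whose partial products fail to be Cauchy), but the central step rests on a false fact. You claim that a nontrivial character of $\S$ is surjective on every neighbourhood of the identity, and use this to pick $x_n\to 1$ with $\xi_n(x_n)=e^{2\pi i/3}$ for all $n$. This is wrong: $\varphi_m$ maps the arc $\{e^{2\pi i t}:|t|<\varepsilon\}$ onto the arc $\{e^{2\pi i s}:|s|<|m|\varepsilon\}$, which for fixed $m$ and small $\varepsilon$ is a small arc around $1$, not all of $\S$. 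Equivalently, the solutions of $m t\equiv 1/3 \pmod 1$ are spaced $1/|m|$ apart, so the smallest one has absolute value of order $1/|m|$; you cannot make $t_n$ small unless $|m_n|$ is large. Concretely, if $m_{k_n}=1$ for all $n$ (so $\xi_{k_n}=\mathrm{id}_{\S}$), then $\xi_{k_n}(x_n)=x_n\to 1$ for \emph{every} null sequence, and no single factor can be kept outside $\S_+$. Your fallback of passing to a further subsequence where $|m_n|$ is large does not help, since the nonzero exponents may all be bounded. This bounded-exponent case is precisely the delicate point that distinguishes the proposition from Lemma~\ref{6apr1}, and your argument does not cover it.

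The paper closes this gap with a block construction rather than a term-by-term one: for $2^n<j\le 2^{n+1}$ it sets $x_j=\exp\left(2\pi i\tfrac{1}{m_{k_j}2^{n+1}}\right)$, so that each factor $\varphi_{m_{k_j}}(x_j)=\exp(2\pi i/2^{n+1})$ individually tends to $1$ (and $x_j\to 1$ because $|m_{k_j}|\ge 1$), yet the product over the $2^n$ indices of the $n$-th block equals $\exp(2\pi i\cdot 2^{n}/2^{n+1})=-1$. One obstructs convergence not by making single factors large but by accumulating many small rotations over longer and longer blocks. If you replace your single-term choice by such a block scheme, the rest of your argument (the reduction via Lemma~\ref{4oct1} and the Cauchy criterion for the partial products) goes through.
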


\begin{proof}
For a fixed $m\in\mathbb Z$ let $\varphi_m:\mathbb S\to \mathbb S$ be the mapping $t\mapsto t^m$. It is known that
    $$
    \mathbb S^{\wedge}=\{\varphi_m:m\in \mathbb Z\}.
    $$
    So, fix a sequence $(m_n)_{n\in\N}\in\Z^{\N}$ such that $(\varphi_{m_n})_{n\in\N}\in c_0(\S)^{\beta}$ and let us see that in fact  $(m_n)_{n\in\N}\in\Z^{(\N)}$.
  \par
  Suppose that   $(m_n)_{n\in\N}\not\in\Z^{(\N)}$. Then for some strictly increasing sequence $(k_n)_{n\in\N}$ of natural numbers we shall have: $m_{k_n}\ne 0,\,n=1,2,\dots$ As $(\varphi_{m_n})_{n\in\N}\in c_0(\S)^{\beta}$, by Lemma \ref{4oct1} we have:
   \begin{equation}\label{2octeq2}
  (\varphi_{m_{k_n}})_{n\in\N}\in c_0(\S)^{\beta}\,.
  \end{equation}
              Let $x_1=x_2=1$; then for a natural number $j>2$ find the unique natural number $n$ with $2^n<j\le 2^{n+1}$ and write
      $$
      x_j=\exp{\left(2{\pi}i\frac{1}{m_{k_j}2^{n+1}}\right)}\,.
      $$
Clearly, ${\bf x}=(x_j)_{j\in \mathbb N}\in c_0(\mathbb S)$ and
      \begin{equation}\label{2octeq3}
 \prod_{j={2^n}+1}^{2^{n+1}}\varphi_{m_{k_j}}(x_j)=\exp{\left(2{\pi}i\sum_{k={2^n}+1}^{2^{n+1}}\frac{1}{2^{n+1}}\right)}=-1,\,n=1,2,\dots
            \end{equation}
     It follows from (\ref{2octeq3}) that
       $$
       \left(\prod_{j=1}^n\varphi_{m_{k_j}}(x_{j})\right)_{n\in \mathbb N}
       $$
  is not a Cauchy sequence in $\mathbb S$, hence it is not convergent in $\mathbb S$ in contradiction with (\ref{2octeq2}).
          \end{proof}
The next proposition plays a pivotal role in the computation of the dual of  $(c_0(X), \mathfrak{u}_0)$.

\begin{Pro}\label{elenaN1} Let $X$ be a topological abelian group and  $G:=(c_0(X), \mathfrak{u}_0)$. The following assertions hold:
\begin{itemize}
    \item[(a)]   $\chi_{\bf h}\in G^{\wedge}$  for every ${\bf h}=(\xi_n)_{n\in \mathbb N}\in (X^{\wedge})^{(\mathbb N)}\subset c_0(X)^{\beta}$ and the mapping   $\chi:(X^{\wedge})^{(\mathbb N)}\to G^{\wedge}$ is an injective group homomorphism.
    \item[(b)]   We have:
$$
G^{\wedge}\subset \{\chi_{\bf h}: {\bf h}=(\xi_n)_{n\in \mathbb N}\in c_0(X)^\beta\}\,.
$$
    \item[(c)]  If $X$ is complete metrizable (or, more generally, $(c_0(X), \mathfrak{u}_0)$ is a Baire space), then  we have:
     $$
     G^{\wedge}=\{\chi_{\bf h}: {\bf h}=(\xi_n)_{n\in \mathbb N}\in c_0(X)^\beta\}\,
     $$
  and the mapping  $\chi:c_0(X)^\beta\to G^{\wedge}$ is a group isomorphism.
  \end{itemize}
  \end{Pro}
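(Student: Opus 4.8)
The plan is to prove the three parts of Proposition~\ref{elenaN1} in turn, with (c) requiring a Baire-category argument.

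\textbf{Part (a).} This is essentially a combination of facts already established. First I would check that $\chi_{\bf h}$ is $\mathfrak{u}_0$-continuous for ${\bf h}=(\xi_n)_{n\in\mathbb N}\in(X^\wedge)^{(\mathbb N)}$: since ${\bf h}$ has finite support, say $\xi_n=1$ for $n>N$, the character $\chi_{\bf h}({\bf x})=\prod_{k=1}^N\xi_k(x_k)$ factors through the projection onto the first $N$ coordinates, which is $\mathfrak{p}_0$-continuous, hence $\mathfrak{u}_0$-continuous because $\mathfrak{p}_0\le\mathfrak{u}_0$ (Proposition~\ref{descr2m}(b)). Injectivity of $\chi$ on $(X^\wedge)^{(\mathbb N)}$ is Lemma~\ref{6apr} applied to $A=c_0(X)$ (which contains $X^{(\mathbb N)}$), together with the observation $(X^\wedge)^{(\mathbb N)}\subseteq c_0(X)^\beta$, which is immediate since a finitely-supported product is a terminating product, hence convergent in $\mathbb S$.

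\textbf{Part (b).} Here I would take an arbitrary $\kappa\in G^\wedge$ and produce ${\bf h}\in c_0(X)^\beta$ with $\kappa=\chi_{\bf h}$. As in Example~\ref{kapl}, set $\xi_n:=\kappa\circ\nu_n\in X^\wedge$, which is legitimate because each $\nu_n:X\to G$ is $\mathfrak{u}_0$-continuous (its image lies in $G_n$, on which $\mathfrak{u}_0$ restricts to the product topology). For ${\bf x}=(x_n)\in c_0(X)$, Lemma~\ref{agu1}(b) says the partial sums $\sum_{k=1}^n\nu_k(x_k)$ converge to ${\bf x}$ in $\mathfrak{u}_0$; applying the continuous $\kappa$ gives $\kappa({\bf x})=\lim_n\prod_{k=1}^n\xi_k(x_k)$. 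In particular this limit exists for every ${\bf x}\in c_0(X)$, so ${\bf h}=(\xi_n)\in c_0(X)^\beta$ and $\kappa=\chi_{\bf h}$. This shows $G^\wedge\subseteq\chi(c_0(X)^\beta)$, and injectivity of $\chi$ on $c_0(X)^\beta$ again follows from Lemma~\ref{6apr}.

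\textbf{Part (c).} The remaining point — and the main obstacle — is the reverse inclusion $\chi(c_0(X)^\beta)\subseteq G^\wedge$, i.e.\ that for every ${\bf h}=(\xi_n)\in c_0(X)^\beta$ the homomorphism $\chi_{\bf h}\colon c_0(X)\to\mathbb S$ is actually \emph{continuous}. When $X$ is complete metrizable, $G=(c_0(X),\mathfrak{u}_0)$ is a Polish group by Proposition~\ref{descr2m}(c), hence a Baire space. The idea is to write $\chi_{\bf h}$ as a pointwise limit of the continuous characters $\psi_n:=\chi_{\bf h_n}$ where ${\bf h}_n:=(\xi_1,\dots,\xi_n,1,1,\dots)\in(X^\wedge)^{(\mathbb N)}$; by part (a) each $\psi_n\in G^\wedge$, and $\psi_n\to\chi_{\bf h}$ pointwise on $c_0(X)$ precisely because ${\bf h}\in c_0(X)^\beta$. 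A pointwise limit of continuous characters on a Baire group need not be continuous in general, so one must exploit more structure: the standard route is to show that the convergence is ``quasi-uniform'' or to invoke a theorem to the effect that a homomorphism from a Baire (or Polish) group to $\mathbb S$ which is a pointwise limit of continuous homomorphisms — equivalently, which is Baire measurable — is continuous. Concretely, I would argue that $\chi_{\bf h}$ is Baire measurable (being a pointwise limit of continuous functions, it is of the first Baire class) and then apply the classical result that a Baire-measurable homomorphism between Polish groups is continuous; continuity at $0$ then gives continuity everywhere. Once $\chi_{\bf h}\in G^\wedge$ is established, combining with (b) yields $G^\wedge=\chi(c_0(X)^\beta)$, and with the injectivity from (a)/Lemma~\ref{6apr} we conclude that $\chi\colon c_0(X)^\beta\to G^\wedge$ is a group isomorphism. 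The delicate step is thus the automatic-continuity input; everything else is bookkeeping with the already-proved lemmas.
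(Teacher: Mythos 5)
Your decomposition and the first two parts coincide with the paper's proof: part (a) is the observation that $\mathfrak{p}_0\le\mathfrak{u}_0$ plus Lemma \ref{6apr}, and part (b) is exactly the paper's argument via $\xi_n=\kappa\circ\nu_n$ and Lemma \ref{agu1}(b). The only divergence is in the continuity step of (c). There you and the paper both approximate $\chi_{\bf h}$ by the continuous characters $\chi_{{\bf h}_n}$ with ${\bf h}_n=(\xi_1,\dots,\xi_n,1,1,\dots)$, but the paper then applies Osgood's theorem directly: the pointwise limit of continuous functions on a Baire space has a point of continuity, and a homomorphism continuous at one point is continuous everywhere. You instead route through Baire measurability (first Baire class) and a Pettis-type automatic-continuity theorem for Baire-measurable homomorphisms. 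Both arguments are correct and rest on the same Baire-category mechanism; the paper's version is more self-contained and matches the stated hypothesis ``$(c_0(X),\mathfrak{u}_0)$ is a Baire space'' exactly, whereas your phrasing ``between Polish groups'' is slightly narrower than needed (the relevant form of the theorem requires only a Baire domain and a separable metrizable codomain, which $\S$ is). One incidental remark in your write-up is actually false as stated: a pointwise limit of continuous \emph{characters} on a Baire topological group \emph{is} always continuous --- that is precisely the content of the Osgood-plus-homogeneity step --- and what fails in general is only the continuity of pointwise limits of continuous \emph{functions}. Since you immediately supply the missing structure (the homomorphism property) and reach the right conclusion, this does not affect the validity of the proof.
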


  \begin{proof}  $(a)$ As $\mathfrak{u}_0\ge {\mathfrak{p}}_0$ we have
   $\chi_{\bf h}\in G^{\wedge},\,\forall {\bf h}=(\xi_n)_{n\in \mathbb N}\in (X^{\wedge})^{(\mathbb N)}$.
 The rest follows from %Corollary \ref{30Oct}.
 Lemma \ref{6apr}.

    $(b)$ Fix $\kappa\in G^{\wedge}$. We need to find ${\bf h}=(\xi_n)_{n\in \mathbb N}\in c_0(X)^\beta$ such that $\kappa=\chi_{\bf h}$. For every $n\in \mathbb N$ the homomorphism $\nu_n:X\to G$ is continuous, so $\xi_n:=\kappa\circ \nu_n\in X^{\wedge}$. Let us see that   ${\bf h}:=(\xi_n)_{n\in \mathbb N}$ meets the requirements.

 Fix ${\bf x}=(x_n)_{n\in \mathbb N}\in c_0(X)$. By Lemma \ref{agu1}  the sequence $(\sum_{k=1}^n \nu_k(x_k))_{n\in \mathbb N}$ converges in $G$
 to ${\bf x}=(x_n)_{n\in \mathbb N}$. Hence,
 \begin{equation}\label{eq29}
  \kappa({\bf x})=\lim_n\kappa(\sum_{k=1}^n\nu_k(x_k))=\lim_n\prod_{k=1}^n\xi_k(x_k)\,.
 \end{equation}
  Since ${\bf x}=(x_n)_{n\in \mathbb N}\in c_0(X)$ is arbitrary, (\ref{eq29}) implies that ${\bf h}=(\xi_n)_{n\in \mathbb N}\in c_0(X)^\beta$ and  $\kappa=\chi_{\bf h}$.
  \par
    $(c)$ Taking into account $(b)$, we only need to see  that
    $$
  G^{\wedge}\supset\{\chi_{\bf h}: {\bf h}=(\xi_n)_{n\in \mathbb N}\in c_0(X)^\beta\}\,.
 $$
 So, fix ${\bf h}=(\xi_n)_{n\in \mathbb N}\in c_0(X)^\beta$. As we have already noted, $\chi_{\bf h}:c_0(X)\to \mathbb S$
 is a group homomorphism. For $n\in \mathbb N$, set ${\bf h}_n=(\xi_1,\dots,\xi_n,1,1,\dots)$. Then ${\bf h}_n \in (X^{\wedge})^{(\mathbb N)}$. Hence,
 \begin{equation}\label{3oct1}
 \chi_{{\bf h}_n}\in G^{\wedge},\,\,n=1,2,\dots
\end{equation}
 Clearly,
 \begin{equation}\label{5apr}
 \lim_n\chi_{{\bf h}_n}({\bf x})=\chi_{\bf h}({\bf x}),\,\quad \forall {\bf x}=(x_n)_{n\in \mathbb N}\in c_0(X)\,.
  \end{equation}
Since $X$ is complete metrizable, by Proposition \ref{descr2m}$(c)$, the group  $G=(c_0(X), \mathfrak{u}_0)$ is complete metrizable too. In particular, $G=(c_0(X), \mathfrak{u}_0)$
is a Baire space. This and relations (\ref{3oct1}) and (\ref{5apr}), according to Osgood's theorem \cite[Theorem 9.5 (pp. 86--87)]{KeNa}
imply  that the function $\chi_{\bf h}$ has a $\mathfrak{u}_0$-continuity point   ${\bf x}=(x_n)_{n\in \mathbb N}\in c_0(X)$.
  % (this is a consequence of Osgood's theorem \cite[Theorem 9.5 (pp. 86--87)]{KeNa}).
  Since   $\chi_{\bf h}$ is a group homomorphism, we get that  $\chi_{\bf h}$ is $\mathfrak{u}_0$-continuous. Therefore, $\chi_{\bf h}\in G^{\wedge} $.
  \end{proof}
Now we are ready to give the first example of a precompact metrizable group which is not a Mackey group in $\rm{LQC}$:

  \begin{Pro}\label{Maincorollary}   For $X = \S $ the following assertions hold:

   \begin{itemize}
   \item[(a)] {\em (cf. \cite[Lemma]{Ni})} $(c_0(\S), \mathfrak{p}_0)^{\wedge}=\left({c_0(\S)}, \mathfrak{u}_0\right)^{\wedge}%=(\S^{\wedge})^{(\mathbb N)}
   $. In particular, the set $\left({c_0(\S)}, \mathfrak{u}_0\right)^{\wedge}$ is
   countable.

   \item[(b)] $\mathfrak{u}_0$ is a compatible locally quasi-convex Polish group topology for
   $\left({c_0(\S)},  \mathfrak{p}_0\right)$.
  \item[(c)]  $\left({c_0(\S)},  \mathfrak{p}_0\right)$ is a precompact metrizable group which is not a Mackey group in $\rm{LQC}$.
  Further, it is  connected and monothetic.
   \end{itemize}
   \end{Pro}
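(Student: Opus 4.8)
The plan is to derive all three parts by specializing the general machinery of Section 5 to $X=\S$, and then invoking Theorem \ref{elena2}. For part (a): the group $\S$ is compact, hence complete metrizable, so Proposition \ref{elenaN1}(c) applies and gives $\left(c_0(\S),\mathfrak{u}_0\right)^{\wedge}=\{\chi_{\bf h}:{\bf h}\in c_0(\S)^{\beta}\}$ via the isomorphism $\chi$. By Proposition \ref{bolo2oq1} we have $c_0(\S)^{\beta}=(\S^{\wedge})^{(\N)}$, so $\left(c_0(\S),\mathfrak{u}_0\right)^{\wedge}=\{\chi_{\bf h}:{\bf h}\in(\S^{\wedge})^{(\N)}\}$. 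On the other hand, Example \ref{kapl} (applied to $X=\S$ and then restricted to the subgroup $c_0(\S)\subset\S^{\N}$) identifies $\{\chi_{\bf h}:{\bf h}\in(\S^{\wedge})^{(\N)}\}$ with a subgroup of $\left(c_0(\S),\mathfrak{p}_0\right)^{\wedge}$; conversely every $\mathfrak{p}_0$-continuous character of $c_0(\S)$ extends the recipe $\kappa\mapsto(\kappa\circ\nu_n)_n$ of Proposition \ref{elenaN1}(b) (which works verbatim for $\mathfrak{p}_0$ since $\mathfrak{p}_0$-convergence of $\sum_k\nu_k(x_k)$ to ${\bf x}$ holds for ${\bf x}\in c_0(\S)$, indeed for all of $\S^{\N}$), so $\left(c_0(\S),\mathfrak{p}_0\right)^{\wedge}$ is also $\{\chi_{\bf h}:{\bf h}\in(\S^{\wedge})^{(\N)}\}$. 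Hence the two duals coincide; since $\S^{\wedge}\cong\Z$ is countable, $(\Z)^{(\N)}$ is countable and so is the common dual.

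For part (b): since $c_0(\S)$ is a closed subgroup of $(\S^{\N},\mathfrak{u})$ by Lemma \ref{agu1}(a), and $(\S^{\N},\mathfrak{u})$ is complete metrizable locally quasi-convex (Proposition \ref{descr1}, Remark \ref{NewUD}), the group $(c_0(\S),\mathfrak{u}_0)$ is complete, metrizable, separable by Proposition \ref{descr2m}(c) — i.e. Polish — and locally quasi-convex. That $\mathfrak{u}_0$ is compatible for $\left(c_0(\S),\mathfrak{p}_0\right)$ is exactly the content of part (a). For part (c): $\left(c_0(\S),\mathfrak{p}_0\right)$ is the Bohr-type modification we need — more precisely, since $(c_0(\S),\mathfrak{p}_0)$ is a subgroup of the compact group $\S^{\N}$ it is precompact, and it is metrizable as a subspace of the metrizable group $(\S^{\N},\mathfrak{p})$. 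Now apply Theorem \ref{elena2} with $(G,\tau)=(c_0(\S),\mathfrak{u}_0)$: by part (a) this group has countable dual, and by Proposition \ref{descr1}(c$_2$) (or \ref{descr2m}) it is not precompact, so Theorem \ref{elena2} yields that $(c_0(\S),\tau^+)$ is metrizable precompact and not Mackey in $\rm{LQC}$. It remains to identify $\tau^+$ with $\mathfrak{p}_0$: both are precompact group topologies on $c_0(\S)$ inducing the same continuous characters (part (a) gives $(c_0(\S),\mathfrak{p}_0)^{\wedge}=(c_0(\S),\mathfrak{u}_0)^{\wedge}=(c_0(\S),\tau^+)^{\wedge}$), and by Proposition \ref{coro}(c) a precompact topology coincides with its own Bohr modification, so $\mathfrak{p}_0=(\mathfrak{p}_0)^+=\tau^+$ since a group has a unique precompact topology with a prescribed separating dual. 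Finally, connectedness of $(c_0(\S),\mathfrak{u}_0)$ follows from Proposition \ref{descr2m}(c) since $\S$ is connected, and connectedness passes to the coarser topology $\mathfrak{p}_0$; monotheticity was recorded in Remark \ref{Ro} for $(c_0(\S),\mathfrak{u}_0)$ and again descends to the coarser Hausdorff group topology $\mathfrak{p}_0$ (a dense cyclic subgroup stays dense).

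The main obstacle I anticipate is the clean identification $\mathfrak{p}_0=\tau^+$: one must make sure that $\mathfrak{p}_0$ really is a Hausdorff precompact group topology with dual equal to the full common dual computed in (a), and cite the uniqueness of a precompact topology determined by its dual (Proposition \ref{coro}(a),(c) together with the standard fact that $\sigma(G,G^{\wedge})$ is determined by $G^{\wedge}$). Everything else is a matter of quoting Propositions \ref{bolo2oq1}, \ref{elenaN1}, Example \ref{kapl}, Lemma \ref{agu1} and Theorem \ref{elena2} in the right order.
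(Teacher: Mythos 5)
Your argument is correct. Parts (a) and (b) run essentially as in the paper: the paper also reduces (a) to Proposition \ref{bolo2oq1} via Proposition \ref{elenaN1} (it uses item (b) of that proposition rather than item (c), which spares the appeal to completeness, but both work and the trivial inclusion $(c_0(\S),\mathfrak{p}_0)^{\wedge}\subset(c_0(\S),\mathfrak{u}_0)^{\wedge}$ already follows from $\mathfrak{p}_0\le\mathfrak{u}_0$, so your detour through Example \ref{kapl} for that inclusion is redundant though harmless). Where you genuinely diverge is in (c). The paper's proof is direct: $\mathfrak{p}_0$ is precompact metrizable as a subgroup topology of the compact metrizable $(\S^{\N},\mathfrak{p})$, and by (b) the topology $\mathfrak{u}_0$ is a compatible locally quasi-convex topology strictly finer than $\mathfrak{p}_0$ (strictness by Proposition \ref{descr2m}(b)), which immediately kills the Mackey property. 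You instead invoke Theorem \ref{elena2} applied to $(c_0(\S),\mathfrak{u}_0)$ and then identify $(\mathfrak{u}_0)^{+}$ with $\mathfrak{p}_0$ using Proposition \ref{coro}(c) and the fact that the Bohr modification depends only on the dual group; this is exactly the ``second proof'' that the paper records in the remark immediately following the proposition, and your justification of the identification $(\mathfrak{u}_0)^{+}=(\mathfrak{p}_0)^{+}=\mathfrak{p}_0$ is sound. The direct route is shorter and does not need the Bohr-topology bookkeeping; your route has the merit of exhibiting the proposition as a literal instance of the general Theorem \ref{elena2}. Your handling of connectedness and monotheticity (passing density of a cyclic subgroup and connectedness from $\mathfrak{u}_0$ down to the coarser $\mathfrak{p}_0$) matches the paper's citations of Proposition \ref{descr2m}(c) and Remark \ref{Ro}.
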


   \begin{proof}
%  \begin{proof}Indeed, we prove first that:
%  \par
%  $(a)$ {\em (cf. \cite[Lemma]{Ni})} $\left({c_0(\S)}, \mathfrak{p}_0\right)^{\wedge}=\left({c_0(\S)},
%  \mathfrak{u}_0\right)^{\wedge}$.
%  $(b)$ $\mathfrak{u}_0$ is a compatible locally quasi-convex Polish group topology for $\left({c_0(\S)},  \mathfrak{p}_0\right)$.
%
  $(a)$ Since $\mathfrak{p}_0\le \mathfrak{u}$,  we have: $\left({c_0(\S)}, \mathfrak{p}_0\right)^{\wedge}\subset\left({c_0(\S)},
  \mathfrak{u}_0\right)^{\wedge}$. To prove the converse  inclusion, fix an arbitrary $\kappa\in \left({c_0(\S)},  \mathfrak{u}_0\right)^{\wedge}$. By Proposition
  \ref{elenaN1}$(b)$ there exists ${\bf h}=(\xi_n)_{n\in \mathbb N}\in \left(c_0(\S)\right)^{\beta}$
  such that $\kappa=\chi_{\bf h}$. By Proposition \ref{deeq}
$c_0(\S)^{\beta}=({\S}^{\wedge})^{(\mathbb N)}\,.$ Therefore we
have: $\kappa=\chi_{\bf h}$,  where ${\bf h}=(\xi_n)_{n\in \mathbb
N}\in ({\S}^{\wedge})^{(\mathbb N)}\,.$ Consequently, $\kappa\in
\left({c_0(\S)}, \mathfrak{p}_0\right)^{\wedge}$ and \NB the first
part of  $(a)$ is proved. \NB The second part of $(a)$ follows from
the first one because $(c_0(\S), \mathfrak{p}_0)^{\wedge}$ is
algebraically isomorphic to $(\S^{\N}, \mathfrak{p})^{\wedge}$.

$(b)$ $\mathfrak{u}_0$ is a  locally quasi-convex Polish group topology by Proposition \ref{descr2m}.
 By $(a)$ it is compatible for $\left({c_0(\S)},  \mathfrak{p}_0\right)$.

$(c)$ Observe that $\left({c_0(\S)}, \mathfrak{p}_0\right)$ is a
topological subgroup
of  the compact metrizable group $({\S}^{\N},\mathfrak{p})$. Therefore it is metrizable and precompact.
By  $(b)$, $\mathfrak{u}_0$ is a locally quasi-convex group topology compatible for $\left({c_0(\S)}, \mathfrak{p}_0\right)$ and
strictly finer than $\mathfrak{p}_0$ (by  Proposition \ref{descr2m}).% $\mathfrak{u}_0$  .
  This proves that $\left({c_0(\S)}, \mathfrak{p}_0\right)$ is not a Mackey group in $\rm{LQC}$.

The last two assertions follow respectively from  Proposition \ref{descr2m} $(c)$  and from Remark \ref{Ro}.
%To deduce (c) from (a) and (b), note that   $\left({c_0(\S)},  \mathfrak{p}_0\right)$ is a precompact metrizable   group because it is a topological subgroup
%of  the compact metrizable group $({\S}^{\N},\mathfrak{p})$.
%%It is connected by Remark \ref{70ct1}(4).
%It is not a Mackey group in $\rm{LQC}$ because by $(b)$ $\mathfrak{u}_0$ is a compatible locally quasi-convex Polish group topology for $\left({c_0(\S)}, \mathfrak{p}_0\right)$ which by Proposition \ref{descr2m}  is strictly finer than $\mathfrak{p}_0$.  Proposition \ref{descr2m} $(c)$ yields the onnectedness, and from Remark \ref{Ro} it follows that $\left({c_0(\S)},  \mathfrak{p}_0\right)$ is monothetic.
  \end{proof}
  \begin{remark}{\em
  It follows from Proposition \ref{descr2m} and Proposition
  \ref{Maincorollary}$(a)$, that $\left({c_0(\S)},
  \mathfrak{u}_0\right)$ is a non-precompact locally quasi-convex
  group with countable dual $\left({c_0(\S)},
  \mathfrak{u}_0\right)^{\wedge}$; consequently by Theorem \ref{elena2}
  $\left({c_0(\S)},
  (\mathfrak{u}_0)^+\right)$ is a metrizable precompact group which
  is not a Mackey group in  $\rm{LQC}$. Observe, however that (again
  by Proposition
  \ref{Maincorollary}$(a)$) we have:
  $(\mathfrak{u}_0)^+=\mathfrak{p}_0$ and  we get a second
  proof of Proposition  \ref{Maincorollary}$(c)$.}
  \end{remark}

  We shall see below that  the group  $\S$ in  Proposition \ref{Maincorollary} can be replaced by an arbitrary non-trivial compact connected metrizable  group (see Theorem \ref{basth}). However the proof of this fact will require a subtle preparation, to which the rest of the paper is devoted.

   \section{The class $\(SB)$}

In this section we introduce a large class of compact metrizable groups $X$ that can be used as input in Proposition  \ref{Maincorollary}.

    \begin{definition}\label{SB}
    {\em For a topological abelian group  $X$, let
     $$
     \Gamma_{\rm{abs}}(X):=\{\xi\in  X^{\wedge}:(\xi(x_n))_{n\in \mathbb N}\in \rm{ss}(\S)\quad\forall {\bf x}=(x_n)_{n\in \mathbb N}\in c_0(X) \}\,.
       $$
         Clearly, $\Gamma_{\rm{abs}}(X)$ is a subgroup of $ X^{\wedge}$.
Denote by  $\(SB)$  the class of groups $X$ %with the property
 such that $\Gamma_{\rm{abs}}(X)=\{1\}$.}
       \end{definition}

       \begin{remark}

        {\em Let $X$ be a topological abelian group.

\begin{itemize}
     \item[(a)]
      The notation $\Gamma_{\rm{abs}}(X)$ is justified by  the following facts easy to prove:
       $$
     \Gamma_{\rm{abs}}(X)=\{\xi\in  X^{\wedge}:(\xi(x_n))_{n\in \mathbb N}\in l(\S), \quad\forall {\bf x}=(x_n)_{n\in \mathbb N}\in c_0(X) \}\,.
       $$

   Taking into account the equality (\ref{rd2}), for any character $\xi\in  X^{\wedge}$, we have:
 $$\xi \in\Gamma_{\rm{abs}}(X) \mbox{  iff   } \sum_{k=1}^{\infty}|1-\xi(x_k)|<\infty, \, \forall  {\bf x}=(x_n)_{n\in \mathbb N}\in c_0(X).$$

     \item[(b)] $\Gamma_{\rm{abs}}(X)$  can be described also by means of the diagonal homomorphism $\Delta:  X^{\wedge} \longrightarrow (X^{\wedge})^\N$. Clearly, if  $\xi\in X^{\wedge}$, then  $\xi\in \Gamma_{\rm{abs}}(X)$  iff $(\xi,\xi,\dots,\xi,\dots)\in c_0(X)^\beta$. Therefore:
$$
\Gamma_{\rm{abs}}(X) = \Delta^{-1}(c_0(X)^\beta)
.$$
Whenever  $c_0(X)^\beta = (X^{\wedge})^{(\N)}$,  $\Gamma_{\rm{abs}}(X) =\{1\}$, and
    $X\in \(SB)$. In particular, by Proposition \ref{bolo2oq1}, $\S\in \(SB)$.
             \item[(c)]  We shall show below (Theorem \ref{NewTh}), that if  $X\in \(SB)$ is a  precompact group , then $(c_0(X), \mathfrak u_0)^\wedge$ can be canonically identified with $(X^{\wedge})^{(\N)}$.
             Thus, summarizing: \\

  \centerline{\em $c_0(X)^\beta = (X^{\wedge})^{(\N)} \Longrightarrow X\in \(SB), \;\;\;\;\;\;\;\;$ whilst  $\;\;\;\; \;\;\;\;X\in \(SB) \; \&\;  X$ is precompact $ \Longrightarrow c_0(X)^\wedge = (X^{\wedge})^{(\N)}$.
  %  a if $X$ is a precompact group, then $X\in\(SB)$ iff  $c_0(X)^\beta = (X^{\wedge})^{(\N)}$
  }

           \medskip         \medskip

 %Obviously $c_0(X)^\beta = (X^{\wedge})^{(\N)} \Longrightarrow c_0(X)^\wedge = (X^{\wedge})^{(\N)}$.
 Our ultimate aim to have the equality $c_0(X)^\wedge = (X^{\wedge})^{(\N)}$ has motivated the  introduction of the class $\(SB)$.
    \end{itemize}}
       \end{remark}

\NB In what follows assume that  $c_0(X)$ is endowed with the topology $\mathfrak u_0$. The   dual $c_0(X)^\wedge $ is a subgroup of the group $Hom(c_0(X), \S)$, and  the density of $X^{(\N)}$ in   $ c_0(X)$ allows the algebraic identification of
 $c_0(X)^\wedge $  with a subgroup of $(X^{\wedge})^{\N}$ contained in $c_0(X)^\beta$ (see also the assignment $\kappa \mapsto (\xi_n)$ from the proof of item (b) of Proposition \ref{elenaN1}). In the sequel we denote by $ j: G^\wedge\to (X^{\wedge})^{\N}$ this assignment $\kappa \mapsto (\xi_n)$, which actually is the inverse of $\chi$ defined in the previous section.
  Then $j(G^\wedge) \subseteq c_0(X)^\beta$.

 \begin{theorem}\label{NewTh}
 If  $X\in\(SB)$ is precompact, and $G:=(c_0(X), \mathfrak{u}_0)$, then $j(G^\wedge)= (X^{\wedge})^{(\N)}$.  Hence the mapping  $\chi:(X^{\wedge})^{(\mathbb N)}\to G^{\wedge}$ is a  group isomorphism.
 % and $\rm{Card}(G^{\wedge})=\rm{Card}(X^{\wedge})$.
    \end{theorem}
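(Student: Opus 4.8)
The goal is to show that for a precompact $X\in\(SB)$, the dual $j(G^\wedge)$ coincides with $(X^{\wedge})^{(\N)}$. One inclusion, namely $(X^{\wedge})^{(\N)}\subseteq j(G^\wedge)$, is immediate: by Proposition \ref{elenaN1}(a), $\chi_{\bf h}\in G^\wedge$ for every ${\bf h}\in(X^{\wedge})^{(\N)}$, and $j$ is by construction the inverse of $\chi$. So the whole content is the reverse inclusion $j(G^\wedge)\subseteq(X^{\wedge})^{(\N)}$. Since we already know from Proposition \ref{elenaN1}(b) that $j(G^\wedge)\subseteq c_0(X)^\beta$, it suffices to prove that
$$
c_0(X)^\beta\cap j(G^\wedge)\subseteq (X^{\wedge})^{(\N)},
$$
and in fact I would aim for the cleaner statement that every ${\bf h}=(\xi_n)_{n\in\N}\in c_0(X)^\beta$ arising from a continuous character has only finitely many nontrivial coordinates.

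\textbf{Main step.} The plan is to argue by contradiction: suppose ${\bf h}=(\xi_n)_{n\in\N}\in j(G^\wedge)\subseteq c_0(X)^\beta$ has $\xi_{k_n}\neq 1$ for a strictly increasing sequence $(k_n)_{n\in\N}$. By Lemma \ref{4oct1} (applicable since $c_0(X)$ satisfies $\{0,1\}^\N c_0(X)\subseteq c_0(X)$), the subsequence $(\xi_{k_n})_{n\in\N}$ again lies in $c_0(X)^\beta$. Now I want to extract a single character $\eta\in X^\wedge$, $\eta\neq 1$, lying in $\Gamma_{\rm{abs}}(X)$, which contradicts $X\in\(SB)$. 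The idea is a diagonal/uniform-boundedness argument exploiting precompactness of $X$: because $X$ is precompact, for each fixed neighbourhood $V\in\mathcal N(X)$ the polar $V^{\triangleright}$ is \emph{finite} (Proposition \ref{2oct}). Hence the characters $\xi_{k_n}$, which are ``small'' on large initial segments in a uniform sense coming from $\mathfrak u_0$-continuity, cannot be spread over infinitely many distinct characters without forcing repetition; more precisely, infinitely many of the $\xi_{k_n}$ must coincide with a single nontrivial $\eta\in X^\wedge$. Once that is established, testing $\eta$ against an arbitrary ${\bf x}=(x_n)\in c_0(X)$ (placed in the positions where $\xi_{k_n}=\eta$ and zero elsewhere, which is again in $c_0(X)$) and applying Lemma \ref{4oct1}-type reasoning shows $(\eta(x_n))_{n\in\N}\in\rm{ss}(\S)$ for all such ${\bf x}$, i.e. $\eta\in\Gamma_{\rm{abs}}(X)=\{1\}$, a contradiction.

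\textbf{The delicate point.} The real obstacle is justifying that infinitely many $\xi_{k_n}$ coincide. The continuity of $\chi_{\bf h}=\kappa\in G^\wedge$ gives a $V\in\mathcal N(X)$ with $\kappa(V^\N\cap c_0(X))\subseteq\S_+$; one then wants to deduce that each $\xi_n$, or suitable partial products $\prod_{j=m}^{n}\xi_{k_j}$, must send $V$ into $\S_+$ or a slightly larger arc. This requires a careful ``separation of coordinates'' argument: feed into $\kappa$ elements of $c_0(X)$ supported on a single coordinate $\nu_{k_n}(x)$ with $x\in V$, getting $\xi_{k_n}(V)\subseteq\S_+$, so all $\xi_{k_n}\in V^{\triangleright}$. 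Since $V^{\triangleright}$ is finite by Proposition \ref{2oct}, the pigeonhole principle yields an $\eta\in V^{\triangleright}$ with $\xi_{k_n}=\eta$ for infinitely many $n$, and $\eta\neq 1$ because each $\xi_{k_n}\neq 1$ (passing to a further subsequence if necessary so that the constant value is nontrivial). The remaining bookkeeping — checking that the element of $c_0(X)$ built from an arbitrary null sequence along the ``$\eta$-positions'' really lies in $c_0(X)$, and that convergence of $(\prod\xi_k(x_k))$ restricts to convergence of the $\eta$-subproduct (this is exactly Lemma \ref{4oct1}) — is routine. Finally, $j(G^\wedge)=(X^{\wedge})^{(\N)}$ together with the injectivity of $\chi$ from Lemma \ref{6apr} gives that $\chi\colon(X^{\wedge})^{(\N)}\to G^\wedge$ is a group isomorphism, completing the proof.
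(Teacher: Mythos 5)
Your proposal is correct and follows essentially the same route as the paper's proof: continuity of $\chi_{\bf h}$ places all $\xi_n$ in a finite polar $V^{\triangleright}$ (Proposition \ref{2oct}), the pigeonhole principle produces a character repeated infinitely often, Lemma \ref{4oct1} extracts the constant subsequence to show that character lies in $\Gamma_{\rm{abs}}(X)$, and $X\in\(SB)$ forces it to be trivial. The only cosmetic difference is that you phrase this as a proof by contradiction, whereas the paper argues directly that every $\xi\in V^{\triangleright}$ occurring infinitely often must equal $1$.
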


\begin{proof} By Proposition \ref{elenaN1}$(a)$, we only need to see that $\chi$ is surjective. To this end fix
$$
{\bf h}=(\xi_n)_{n\in \mathbb N}\in j(G^\wedge) \subseteq c_0(X)^\beta \subseteq (X^{\wedge})^{\mathbb N}.
$$
 We have to see that, in fact, ${\bf h}\in (X^{\wedge})^{(\mathbb N)}$.\\
 {\it By using the $\mathfrak{u}_0$ continuity of $ \chi_{\bf h}$}  we can find  some $V\in \mathcal N(X)$ such that
$\chi_{\bf h}\left( V^{\N} \cap c_0(X)\right)\subset \S_+\,$. This implies:
 \begin{equation}\label{imcon}
 \xi_n\in  V^{\triangleright},\quad \forall n\in \mathbb N\,.
 \end{equation}
  By Proposition \ref{2oct} {\it the precompactness of  $X$ implies that  the set $V^{\triangleright}$ is finite}. Thus, by (\ref{imcon}), the set
 $$
W_{{\bf h}}:=\{\xi\in V^{\triangleright}:\exists n\in \mathbb N,\,\,\xi=\xi_n\}
 $$
 is finite. Fix $\xi\in W_{{\bf h}}$ and write
 $$
\mathbb N_{\xi}:=\{n\in \mathbb N: \xi=\xi_n\}.
 $$
 Clearly, $\mathbb N_{\xi}\ne \emptyset,\,\,\forall \xi\in W_{{\bf h}}$.  We need to prove the following\\

{\bf Claim.} {\em If  the set $\mathbb N_{\xi}$ {\it is infinite} for some $\xi\in W_{{\bf h}}$, then $\xi=1$.} \\

{\sl Proof of the Claim.} Let  $\mathbb N_{\xi}$ {\it is infinite} for
$\xi\in W_{{\bf h}}$. Write $\mathbb N_{\xi}$ as a strictly increasing sequence: $\mathbb N_{\xi}=\{k_1,k_2,\dots\}$.

From $(\xi_n)_{n\in \mathbb N}\in c_0(X)^\beta$ by Lemma \ref{4oct1} we conclude that  $(\xi_{k_n})_{n\in \mathbb N}\in c_0(X)^\beta$. Hence, $(\xi,\xi,\dots,\xi,\dots)\in c_0(X)^\beta$, and so,  $\xi\in \Gamma_{\rm{abs}}(X)$. As $X\in \(SB)$, we obtain that $\xi=1$. This proves the claim. \\

As the set $ W_{{\bf h}}$ {\bf is finite}, we deduce from the claim that for some $n_0\in \mathbb  N$ we have:  $\xi_n=1,\,\forall n\ge n_0$. Consequently,  ${\bf h}=(\xi_n)_{n\in \mathbb N}\in (X^{\wedge})^{(\mathbb N)}$ and the surjectivity of $\chi$ is proved.
\end{proof}

\begin{Cor}\label{NewNew} If  $X\in \(SB)$ is a non-trivial precompact group and  $G:=(c_0(X), \mathfrak{u}_0)$, then
 $\mbox{Card } G^{\wedge}=\mbox{Card } X^{\wedge}$.
\end{Cor}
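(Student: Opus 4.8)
\textbf{Proof plan for Corollary \ref{NewNew}.}
The plan is to deduce the statement directly from Theorem \ref{NewTh}. Since $X\in\(SB)$ is precompact, that theorem yields a group isomorphism $\chi\colon (X^{\wedge})^{(\mathbb N)}\to G^{\wedge}$, so it suffices to compute the cardinality of $(X^{\wedge})^{(\mathbb N)}$ and show it equals $\mbox{Card } X^{\wedge}$. First I would record that $X^{\wedge}$ is infinite: this is because $X\ne\{0\}$ is precompact Hausdorff, hence $X^{\wedge}$ is separating (so $X^{\wedge}\ne\{1\}$) and, being a nontrivial subgroup of a group that must in fact be infinite — indeed a precompact group with finite dual would be finite, contradicting that a nontrivial compact-type group carries arbitrarily large finite quotients; alternatively one may simply invoke that the dual of an infinite precompact group is infinite, and that a nontrivial precompact group has infinite dual since otherwise it would embed in a finite group and then $c_0(X)=X^{(\mathbb N)}$ would force triviality questions that are beside the point. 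The cleanest route: a nontrivial precompact Hausdorff group $X$ has $\mbox{Card } X^{\wedge}\ge\aleph_0$, because $X^{\wedge}$ separates points of the infinite group $X$ (infinite since $\{0\}\subsetneq X$ and $X$ is a nontrivial subgroup of a compact group, which is either infinite or ... in the finite case $X^\wedge$ is still finite but then $X$ is finite, and a finite nontrivial group trivially has $\mbox{Card }X^\wedge=\mbox{Card }X$, and $(X^\wedge)^{(\mathbb N)}$ is countable $\ne \mbox{Card }X^\wedge$ — so one must be slightly careful here).

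Let me restructure to avoid that gap. The key cardinal arithmetic fact is: for any \emph{infinite} group $A$ one has $\mbox{Card } A^{(\mathbb N)}=\mbox{Card } A$, since $A^{(\mathbb N)}=\bigcup_{n}A^{n}$ is a countable union of sets each of cardinality $\mbox{Card } A$, hence has cardinality $\aleph_0\cdot\mbox{Card } A=\mbox{Card } A$. So the whole corollary reduces to verifying that $X^{\wedge}$ is infinite whenever $X$ is a non-trivial precompact group. I would argue this as follows: a precompact Hausdorff abelian group $X$ embeds densely in its completion $K$, a compact Hausdorff abelian group, and $X^{\wedge}$ is algebraically isomorphic to $K^{\wedge}$ (restriction to a dense subgroup). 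If $K$ were finite, then $X=K$ would be finite; but then $X$ discrete and $X\in\(SB)$ would give, via $c_0(X)=X^{(\mathbb N)}$ and Lemma \ref{6apr1}, that $c_0(X)^{\beta}=(X^{\wedge})^{\mathbb N}$, forcing $\Gamma_{\rm abs}(X)=X^{\wedge}\ne\{1\}$ (as $X\ne\{0\}$ is MAP), contradicting $X\in\(SB)$. Hence $K$ is infinite compact, and by \cite[(24.14)]{HR} (or Proposition \ref{codu1}) its dual $K^{\wedge}$ is infinite; therefore $X^{\wedge}\cong K^{\wedge}$ is infinite.

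With $A:=X^{\wedge}$ infinite in hand, the chain is immediate: $\mbox{Card } G^{\wedge}=\mbox{Card } (X^{\wedge})^{(\mathbb N)}=\mbox{Card } X^{\wedge}$, using Theorem \ref{NewTh} for the first equality and the countable-union cardinal computation for the second. I do not anticipate a serious obstacle; the only point requiring a moment's care is the one flagged above, namely ruling out $X$ finite by exploiting the hypothesis $X\in\(SB)$ together with $X\ne\{0\}$, so that $X^{\wedge}$ is genuinely infinite and the naive identity $\mbox{Card } A^{(\mathbb N)}=\mbox{Card } A$ applies. An even shorter alternative, if one prefers not to invoke $\(SB)$ here, is to note that a non-trivial \emph{precompact} group cannot be both non-trivial and have finite dual unless it is finite, and in the genuinely relevant cases of the paper ($X$ compact connected metrizable, hence infinite) this is automatic; but the argument via $\(SB)$ is self-contained and covers all stated hypotheses.
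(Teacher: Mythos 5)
Your proposal is correct and follows essentially the same route as the paper: apply Theorem \ref{NewTh} to get the isomorphism $\chi:(X^{\wedge})^{(\mathbb N)}\to G^{\wedge}$, rule out the finite case (where $c_0(X)=X^{(\mathbb N)}$ would force $\Gamma_{\rm abs}(X)=X^{\wedge}\ne\{1\}$, contradicting $X\in\mathfrak B$), and conclude with $\mbox{Card}\,A^{(\mathbb N)}=\mbox{Card}\,A$ for infinite $A$. The only difference is cosmetic: you justify ``$X^{\wedge}$ finite implies $X$ finite'' via the completion and Pontryagin duality, where the paper leaves this step implicit.
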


\begin{proof}
According to Theorem \ref{NewTh}, the mapping  $\chi:(X^{\wedge})^{(\mathbb N)}\to G^{\wedge}$ is a  group isomorphism.
 Hence, $\mbox{Card } G^{\wedge}=\mbox{Card } (X^{\wedge})^{(\mathbb N)}$.
 Clearly, for  any precompact nontrivial group $X \in \(SB)$, $X^\wedge$ is infinite. Otherwise $X$ would also be finite  and   $c_0(X)$ would coincide with $X^{(\N)}$  Then $\Gamma_{\rm{abs}}(X) \neq 1$, which contradicts the fact that  $X \in \(SB)$.
Thus,   $\mbox{Card }G^{\wedge}=\mbox{Card } (X^{\wedge})^{(\mathbb N)}= \mbox{Card } X^{\wedge}$.
\end{proof}

\subsection{Properties of the class $\(SB)$}

  It is clear that $\(SB)$ contains all minimally almost periodic groups. We will consider soon
other  interesting examples.
  \begin{Pro}\label{tirol} Let $X$ be a topological abelian group.
\begin{itemize}
     \item[(a)]  If $\rm{cs}(X)= c_0(X)$, then $\Gamma_{\rm{abs}}(X)=X^{\wedge}$.
     \item[(b)] If $\rm{cs}(X)=c_0(X)$ and $X^{\wedge}\ne\{1\}$, then $X\not \in \(SB)$.
     \item[(c)] If $X\ne \{0\}$ is locally compact and totally disconnected, then $X\not \in \(SB)$.
\end{itemize}
    \end{Pro}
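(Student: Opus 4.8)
The plan is to prove the three assertions in order, since each builds on the previous (and on the structural results of Section 3 about the subgroups $\mathrm{cs}(X)$, $\mathrm{ss}(X)$, $l(X)$).

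\textbf{Proof of (a).} Here I would simply unwind the definition of $\Gamma_{\mathrm{abs}}(X)$ together with Definition~\ref{SB} and the remark following it. Fix $\xi\in X^{\wedge}$ and an arbitrary ${\bf x}=(x_n)_{n\in\N}\in c_0(X)$. We must show $(\xi(x_n))_{n\in\N}\in\mathrm{ss}(\S)$. The point is that $\xi$ maps null sequences of $X$ to null sequences of $\S$, i.e.\ $(\xi(x_n))_{n\in\N}\in c_0(\S)$; more is true, since the hypothesis $\mathrm{cs}(X)=c_0(X)$ means $\left(\sum_{k=1}^n x_k\right)_n$ is Cauchy in $X$, hence $\left(\xi\!\left(\sum_{k=1}^n x_k\right)\right)_n=\left(\prod_{k=1}^n\xi(x_k)\right)_n$ is Cauchy in $\S$, and $\S$ is complete, so this sequence converges. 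Thus $(\xi(x_n))_{n\in\N}\in\mathrm{ss}(\S)$. Since ${\bf x}$ was arbitrary, $\xi\in\Gamma_{\mathrm{abs}}(X)$, and as $\xi$ was arbitrary, $\Gamma_{\mathrm{abs}}(X)=X^{\wedge}$. I expect this step to be essentially routine; the only subtlety is remembering that a continuous character carries Cauchy sequences to Cauchy sequences.

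\textbf{Proof of (b).} This is immediate from (a): if $\mathrm{cs}(X)=c_0(X)$ then $\Gamma_{\mathrm{abs}}(X)=X^{\wedge}$ by (a), and if in addition $X^{\wedge}\ne\{1\}$ then $\Gamma_{\mathrm{abs}}(X)\ne\{1\}$, which by Definition~\ref{SB} says precisely that $X\notin\(SB)$.

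\textbf{Proof of (c).} The idea is to reduce to (b) via Proposition~\ref{alonso}(c). If $X\ne\{0\}$ is locally compact and totally disconnected, then Proposition~\ref{alonso}(c) gives $l(X)=\mathrm{ss}(X)=c_0(X)$, and in particular $\mathrm{cs}(X)=c_0(X)$ (since $\mathrm{ss}(X)\subseteq\mathrm{cs}(X)\subseteq c_0(X)$). So by (b) it suffices to check $X^{\wedge}\ne\{1\}$. This is where the work is: a nontrivial locally compact abelian group is MAP (indeed, its continuous characters separate points, by Pontryagin duality / the Peter--Weyl theorem for LCA groups), hence $X^{\wedge}\ne\{1\}$. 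Alternatively, and more elementarily in the totally disconnected locally compact case, $\mathcal N(X)$ has a base of open subgroups; picking a proper open subgroup $H$ of finite index inside a compact open subgroup (or using that $X/H$ is a nontrivial discrete group admitting a nontrivial character that lifts to $X$), one produces a nontrivial continuous character. Either way $X^{\wedge}\ne\{1\}$, so (b) applies and $X\notin\(SB)$.

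The main obstacle is the very last point in (c): justifying $X^{\wedge}\ne\{1\}$ for a nontrivial totally disconnected locally compact $X$. The cleanest route is to invoke the structure theorem that such an $X$ has arbitrarily small compact open subgroups, so that $X$ possesses a nontrivial discrete quotient, which admits a nontrivial character; pulling back gives a nontrivial element of $X^{\wedge}$. Everything else is a short chain of definitional implications plus a citation to Proposition~\ref{alonso}.
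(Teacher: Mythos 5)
Your proof is correct and follows the same route as the paper: (a) by direct verification (the paper leaves this as ``easy to verify''; your observation that a continuous character is uniformly continuous and hence carries the Cauchy partial-sum sequence into a Cauchy, hence convergent, sequence in the complete group $\S$ is exactly the intended argument), (b) as an immediate consequence of (a), and (c) by combining (b) with Proposition~\ref{alonso} and the fact that a non-trivial Hausdorff LCA group has non-trivial dual. The only cosmetic difference is that you spell out why $X^{\wedge}\ne\{1\}$, which the paper simply asserts.
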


\begin{proof} $(a)$ is easy to verify and $(b)$ follows from $(a)$. Finally, $(c)$ follows from $(b)$ and Proposition \ref{alonso} $(b)$ because $X^{\wedge}\ne\{1\}$.
%if $X\ne \{0\}$ is a locally compact Hausdorff group.
\end{proof}

The class $\(SB)$ is stable through  continuous homomorphisms (and in particular, through quotients), as proved in the next lemma..

\begin{lemma}\label{wild1} Let $X,Y$ be topological abelian groups and  $\varphi\in CHom(X,Y)$. We have:
\begin{itemize}
    \item[(a)] $\varphi^{\wedge}(\Gamma_{\rm{abs}}(Y))\subset\Gamma_{\rm{abs}}(X)$.
    \item[(b)] If $X\in \(SB)$ and $\varphi(X)$ is dense in $Y$, then $Y\in \(SB)$.
    \item[(c)] If ${\mathbf n}(X)$ is the  von-Neumann's kernel of $X$, then  $X\in \(SB)$ iff $X/{\mathbf n}(X)\in \(SB)$.
\end{itemize}
\end{lemma}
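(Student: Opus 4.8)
The plan is to prove the three assertions in the order (a), (b), (c), each one feeding the next, and to isolate at the end the one step that requires more than a formal unravelling of definitions. For (a) I would just chase the definitions: given $\eta\in\Gamma_{\mathrm{abs}}(Y)$ and an arbitrary ${\bf x}=(x_n)_{n\in\mathbb N}\in c_0(X)$, continuity of $\varphi$ gives $(\varphi(x_n))_{n\in\mathbb N}\in c_0(Y)$, so $\bigl(\eta(\varphi(x_n))\bigr)_{n\in\mathbb N}\in\mathrm{ss}(\mathbb S)$ by the definition of $\Gamma_{\mathrm{abs}}(Y)$; since $\eta(\varphi(x_n))=(\varphi^{\wedge}(\eta))(x_n)$ for all $n$, this says precisely that $\varphi^{\wedge}(\eta)$ is summable along every null sequence of $X$, i.e.\ $\varphi^{\wedge}(\eta)\in\Gamma_{\mathrm{abs}}(X)$.

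For (b), assuming $X\in\(SB)$ and $\varphi(X)$ dense in $Y$, I would take $\eta\in\Gamma_{\mathrm{abs}}(Y)$; by (a) one has $\eta\circ\varphi=\varphi^{\wedge}(\eta)\in\Gamma_{\mathrm{abs}}(X)=\{1\}$, so $\eta$ vanishes on $\varphi(X)$. The set $\eta^{-1}(1)$ is closed in $Y$ (preimage of the closed set $\{1\}\subset\mathbb S$) and contains the dense set $\varphi(X)$, hence equals $Y$; thus $\eta=1$, so $\Gamma_{\mathrm{abs}}(Y)=\{1\}$ and $Y\in\(SB)$.

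For (c), I would put $N:={\mathbf n}(X)$ and let $q\colon X\to X/N$ be the canonical quotient homomorphism, which is continuous, open and surjective; since every continuous character of $X$ kills $N$, the dual map $q^{\wedge}\colon(X/N)^{\wedge}\to X^{\wedge}$ is a group isomorphism, and each $\xi\in X^{\wedge}$ factors uniquely as $\xi=\bar\xi\circ q$ with $\bar\xi\in(X/N)^{\wedge}$. The implication $X\in\(SB)\Rightarrow X/N\in\(SB)$ is then immediate from (b), because $q$ has dense image. For the converse I would assume $X/N\in\(SB)$, take $\xi\in\Gamma_{\mathrm{abs}}(X)$, and show $\bar\xi\in\Gamma_{\mathrm{abs}}(X/N)$: then $\bar\xi=1$, hence $\xi=1$, giving $\Gamma_{\mathrm{abs}}(X)=\{1\}$. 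Fixing $\bar{\bf x}=(\bar x_n)_{n\in\mathbb N}\in c_0(X/N)$, the point is to lift it to a null sequence of $X$: fixing a decreasing neighbourhood base $(U_k)$ at $0$ in $X$ and choosing, for each $k$ and all large $n$, a point $x_n\in q^{-1}(\bar x_n)\cap U_k$ (possible because $q$ is open), one gets $(x_n)_{n\in\mathbb N}\in c_0(X)$ with $q(x_n)=\bar x_n$. Since $\xi$ kills $N$, $\xi(x_n)=\bar\xi(\bar x_n)$ for every $n$, so $\bigl(\bar\xi(\bar x_n)\bigr)_{n}=\bigl(\xi(x_n)\bigr)_{n}\in\mathrm{ss}(\mathbb S)$ because $\xi\in\Gamma_{\mathrm{abs}}(X)$; as $\bar{\bf x}$ was arbitrary, $\bar\xi\in\Gamma_{\mathrm{abs}}(X/N)$.

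Parts (a) and (b) are pure bookkeeping with the definition of $\Gamma_{\mathrm{abs}}$ and the standard identification $X^{\wedge}\cong(X/{\mathbf n}(X))^{\wedge}$; the one place where I expect a genuine (if minor) difficulty is the lifting step in (c), i.e.\ the surjectivity of $q^{\mathbb N}\colon c_0(X)\to c_0(X/N)$. This is transparent when $X$ is metrizable, which covers all the applications in this paper, and in general should be extracted from the openness of $q$ (equivalently, of $q^{\mathbb N}\colon(X^{\mathbb N},\mathfrak u)\to((X/N)^{\mathbb N},\mathfrak u)$).
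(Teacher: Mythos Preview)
Your argument is correct and follows the same route as the paper. The paper's proof is extremely terse: it declares (a) ``easy to verify'', derives (b) from (a) together with the injectivity of $\varphi^{\wedge}$ coming from density of $\varphi(X)$, obtains the forward implication in (c) from (b), and for the reverse implication simply asserts that it ``follows from the fact that the canonical homomorphism $\varphi:X\to X/{\mathbf n}(X)$ induces an isomorphism $\varphi^{\wedge}:(X/{\mathbf n}(X))^{\wedge}\to X^{\wedge}$''. Your write-up makes explicit what that last sentence hides: to pass from $\xi\in\Gamma_{\rm abs}(X)$ to $\bar\xi\in\Gamma_{\rm abs}(X/{\mathbf n}(X))$ one needs to lift null sequences of the quotient to null sequences of $X$, i.e.\ the surjectivity of $c_0(X)\to c_0(X/{\mathbf n}(X))$. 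The paper does not supply this step either, so your caution is well placed rather than a defect of your proof. Your remark that the first-countable case is immediate and already covers every application made later in the paper is accurate; part (c) is in fact only invoked in a side remark about minimally almost periodic groups, where it is trivial anyway.
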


\begin{proof} $(a)$ is easy to verify.\\
$(b)$ By  the  assumption $\Gamma_{\rm{abs}}(X)=\{1\}$, and from $(a)$ we get that $\varphi^{\wedge}(\Gamma_{\rm{abs}}(Y))=\{1\}$. Now it remains to note that $\varphi^{\wedge}$ is injective by the density of $\varphi(X)$ in $Y$.

(c) The implication $X\in \(SB) \Longrightarrow X/{\mathbf n}(X)\in \(SB)$ follows from (b). The implication $X/{\mathbf n}(X)\in \(SB) \Longrightarrow X\in \(SB)$
follows from the fact that  the canonical homomorphism $\varphi : X\to X/{\mathbf n}(X)$ induces an isomorphism $\varphi^\wedge: (X/{\mathbf n}(X))^\wedge
\to X ^\wedge$.
\end{proof}

      Now we prove that the class $\(SB)$ is stable also under arbitrary direct products.

    \begin{Pro}\label{DikLemma2} Let $I$ be a non-empty index set,  $(X_i)_{i\in I}$ a family of topological groups. Then the cartesian product $\prod_{i\in I}X_i$ belongs to
$\(SB)$ iff $X_i\in \(SB)$ for every $i\in I$.
  \end{Pro}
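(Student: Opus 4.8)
The plan is to prove the two implications of the equivalence separately, using the continuous projections and inclusions that link the product with its factors, together with the already-established functoriality of $\Gamma_{\rm{abs}}$ (Lemma \ref{wild1}(a)) and the product stability of $\(SB)$'s defining condition through the structure of $c_0$ of a product.

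For the easy direction, suppose $P := \prod_{i\in I}X_i\in \(SB)$. For each fixed $i\in I$ the projection $\pi_i: P\to X_i$ is a continuous surjective homomorphism with dense (in fact full) image, so by Lemma \ref{wild1}(b) we conclude $X_i\in \(SB)$. (Alternatively, one invokes Lemma \ref{wild1}(a) applied to $\pi_i$, noting that $\pi_i^{\wedge}: X_i^{\wedge}\to P^{\wedge}$ is injective, so $\Gamma_{\rm{abs}}(X_i)=\{1\}$.) This handles one implication with no real work.

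For the converse, assume $X_i\in \(SB)$ for every $i\in I$, and take a character $\xi\in\Gamma_{\rm{abs}}(P)$; the goal is $\xi=1$. The standard structure theory of characters of a product gives that $\xi$ depends on only finitely many coordinates, i.e. there is a finite $F\subset I$ and characters $\xi_i\in X_i^{\wedge}$ for $i\in F$ with $\xi = \prod_{i\in F}\xi_i\circ\pi_i$; equivalently $\xi$ factors through the finite subproduct $\prod_{i\in F}X_i$. So it suffices to show each $\xi_i=1$. Fix $i\in F$; I would like to deduce $\xi_i\in\Gamma_{\rm{abs}}(X_i)$, whence $\xi_i=1$ because $X_i\in\(SB)$. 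To see this, pick an arbitrary null sequence $\mathbf{x}=(x_n)_n\in c_0(X_i)$ and lift it to the null sequence $\mathbf{y}=(y_n)_n\in c_0(P)$ defined by placing $x_n$ in the $i$-th coordinate and $e$ elsewhere (this is null in $P$ because the product topology makes a sequence null iff it is null in every coordinate, and here only the $i$-th coordinate is non-constant); then $\xi(y_n) = \xi_i(x_n)$ for all $n$, so $(\xi_i(x_n))_n = (\xi(y_n))_n\in \rm{ss}(\S)$ because $\xi\in\Gamma_{\rm{abs}}(P)$. Since $\mathbf{x}$ was arbitrary, $\xi_i\in\Gamma_{\rm{abs}}(X_i)=\{1\}$, so $\xi_i=1$; as this holds for every $i\in F$, $\xi=1$, and therefore $P\in\(SB)$.

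The main point requiring care is the claim that every character of an arbitrary (possibly infinite) product $\prod_{i\in I}X_i$ depends on only finitely many coordinates; this is a well-known fact (characters of products factor through finite subproducts), but I would state it explicitly and cite it, since the rest of the argument rests on reducing to the finite case. Everything else — the lifting of null sequences, the behaviour of the product topology on convergence, the injectivity of $\pi_i^{\wedge}$ — is routine. One small remark: the finite-support fact could also be obtained internally from $\Gamma_{\rm{abs}}$ considerations, but invoking it directly is cleanest. Once the reduction to a finite product is in place, the argument is just the coordinatewise lifting described above.
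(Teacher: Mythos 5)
Your proof is correct and follows essentially the same route as the paper: the direction from the product to the factors is obtained via Lemma \ref{wild1}(b) applied to the (surjective) projections, and the converse uses the standard fact that a character of a product has finite support together with a coordinatewise lifting of null sequences to show each component lies in $\Gamma_{\rm{abs}}(X_i)$. The paper leaves that lifting step as "easy to see," and your explicit verification of it is exactly the intended argument.
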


  \begin{proof} Assume that $X_i\in \(SB)$ for every $i\in I$. Fix $\varphi\in (\prod_{i\in I}X_i)^{\wedge}$. It is known (see, e.g., \cite{B} or \cite[Exercise 2.10.4(g,h)]{DPS}) that  there is a family $(\xi_i)_{i\in I}\in \prod_{i\in I}X_i^{\wedge}$ such that  ${\rm{Card}}\{i\in I:\xi_i\ne 1\}<\infty$ and
  $$
  \varphi({\bf x})=\prod_{i\in I}\xi_i(x_i),\quad \forall {\bf x}=(x_i)_{i\in I}\in \prod_{i\in I}X_i\,.
  $$
  Suppose now that $\varphi\in \Gamma_{\rm{abs}}(\prod_{i\in I}X_i)$. It is easy to see that  $\xi_i\in \Gamma_{\rm{abs}}(X_i),\,\forall i\in I$.   From the assumption
  $\Gamma_{\rm{abs}}(X_i)=\{1\},\,\forall i\in I$, we get that $\xi_i=1,\,\forall i\in I$. Hence, $\varphi=1$ and so, $\Gamma_{\rm{abs}}(\prod_{i\in I}X_i)=\{1\}$.
  \par
\NB The converse follows from Lemma \ref{wild1}, (b).
% Assume now  that $\prod_{i\in I}X_i \in \(SB)$.   Fix $j\in I$ and $\xi_j\in \Gamma_{\rm{abs}}(X_j)$. It is sufficient to show that $\xi_j=1$. To do this,  define a  mapping $\varphi_j:\prod_{i\in I}X_i \to \S$ by the equality:
%  $$
%\varphi_j({\bf x})=\xi_j(x_j)\,\quad \forall {\bf x}=(x_i)_{i\in I}\in \prod_{i\in I}X_i\,.
%  $$
%  It follows easily  that  $\varphi_j\in \Gamma_{\rm{abs}}(\prod_{i\in I}X_i)$.  Hence, $\varphi_j=1$. From this and the definition of $\varphi_j$  we get that $\xi_j(x)=1,\,\forall x\in X_j$. Therefore, $\xi_j=1$.
\end{proof}

  \subsection{A description of the class $\(SB)$}

Here we offer a complete description of the metrizable groups in $\(SB)$ (see Theorem \ref{dikconj} and Corollary \ref{Coro_dikconj}).  The following notion due to  Enflo  \cite{Enf} is a cornerstone in the sequel.

\begin{definition}\label{Def_loc_gen}{\em \cite[p. 236]{Enf} A topological group $X$ is called {\it locally generated} if
$$
\langle V\rangle=X\quad \forall V\in \mathcal N(X)\,.
$$
}\end{definition}

It is easy to observe that a topological group $X$ is locally generated iff
$$
X = \bigcup_{k= 1}^{\infty}  (V + \cdots^{k\,\text{summands}} + V),\quad  \forall V\in \mathcal N(X)\,.
$$

 Some easy properties and known facts of the  locally generated groups are collected in the next Remark.

\begin{remark}{\em
\begin{itemize}
     \item[(a)] All connected topological groups are locally generated  {\rm (\cite[Theorem II.7.4 ]{HR})}.  On the other hand,
every locally generated {\it locally compact} group  is connected \cite[Corollary II.7.9 ]{HR}.
      \item[(b)]  Obviously, a group $X$ is  locally generated iff $X$ has no proper open subgroups. Consequently, if
$H$ is a dense subgroup of a topological group $G$, then $H$ is locally generated iff $G$ is locally generated.
      \item[(c)]  From (a) and (b) one can deduce that a locally precompact group is locally generated iff its completion is connected (i.e., the locally generated locally precompact groups are precisely the dense subgroups of the connected locally compact groups).
      \item[(d)]  The additive group of rational numbers $\mathbb Q$ with the usual topology is a  metrizable locally generated group which is totally disconnected. A complete metrizable  locally generated topological abelian group also can be totally disconnected \cite[Example 2.2.1]{Enf}.
\end{itemize}}
\end{remark}

       \begin{theorem}\label{dikconj}  Let $X$ be a topological abelian group.
\begin{itemize}
     \item[(a)] If $X\in \(SB)$, then $X$ is locally generated.
     \item[(b)] If  $X$ is  locally generated and  metrizable, then $X\in \(SB)$.
\end{itemize}
         \end{theorem}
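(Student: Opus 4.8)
\textbf{Proof proposal for Theorem \ref{dikconj}.}

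The plan is to treat the two implications by quite different arguments, both proceeding by contraposition.

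For part (a), suppose $X$ is not locally generated. Then there is $V\in \mathcal N(X)$ with $H:=\langle V\rangle \subsetneq X$; replacing $V$ by a symmetric neighborhood inside it, we may assume $V$ is symmetric, so $H=\bigcup_k(V+\cdots^{k}+V)$ is a proper \emph{open} subgroup of $X$. The quotient $X/H$ is then a non-trivial discrete group, so it has a non-trivial character, i.e.\ there is $\chi\in X^\wedge$ with $\chi(H)=\{1\}$ but $\chi\ne 1$; in particular $\chi(V)=\{1\}\subset \S_+$. I claim $\chi\in\Gamma_{\rm abs}(X)$: given any ${\bf x}=(x_n)\in c_0(X)$, we have $x_n\in V$ for all $n\ge n_0$, so $\chi(x_n)=1$ for $n\ge n_0$ and the partial sums $\prod_{k\le n}\chi(x_k)$ are eventually constant, hence convergent. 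Thus $(\chi(x_n))_n\in \mathrm{ss}(\S)$ for every ${\bf x}\in c_0(X)$, so $\chi\in\Gamma_{\rm abs}(X)\setminus\{1\}$ and $X\notin\(SB)$. This proves (a).

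For part (b), let $X$ be locally generated and metrizable, and suppose $X\notin\(SB)$, i.e.\ there is $\xi\in X^\wedge$, $\xi\ne 1$, with $\xi\in\Gamma_{\rm abs}(X)$; equivalently, using the description via \eqref{rd2} in the Remark after Definition \ref{SB}, $\sum_{k=1}^\infty |1-\xi(x_k)|<\infty$ for every ${\bf x}=(x_n)\in c_0(X)$. I would fix a decreasing base $(V_m)_{m\in\N}$ of symmetric neighborhoods of $e$ in $X$ with $V_{m+1}+V_{m+1}\subseteq V_m$. Since $\xi\ne 1$ and $X$ is locally generated, $\xi(V_m)\ne\{1\}$ for every $m$ (otherwise $\xi(\langle V_m\rangle)=\xi(X)=\{1\}$), so for each $m$ we can pick $y_m\in V_m$ with $\xi(y_m)\ne 1$; moreover, since $\S_+$ contains no non-trivial subgroup of $\S$, by raising $y_m$ to a suitable integer power (staying inside $\langle V_m\rangle$ but controlling where we land — here one uses local generation again, or rather picks $y_m$ more carefully) we may arrange $|1-\xi(y_m)|\ge \delta$ for some fixed $\delta>0$, say $\delta=|1-e^{i\pi/2}|$, i.e.\ $\xi(y_m)\notin\S_+$. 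Now choose an integer $N_m$ so large that $N_m$ copies of $y_m$ "use up" a fixed amount of $\xi$-mass — concretely, since the finite sums $\sum_{j=1}^{N}|1-\xi(y_m^{\,?})|$ can be made $\ge 1$ by repeating — and then build a single sequence ${\bf x}\in c_0(X)$ by concatenating, for each $m$, a block consisting of $N_m$ terms drawn from $V_m$ whose $\xi$-images have total $|1-\cdot|$-mass $\ge 1$. Because each block lies in $V_m$ and $V_m\to e$, the resulting ${\bf x}$ is a null sequence, hence in $c_0(X)$; but $\sum_k|1-\xi(x_k)|=\infty$, contradicting $\xi\in\Gamma_{\rm abs}(X)$. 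Hence $X\in\(SB)$.

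The main obstacle is the quantitative step in (b): from "$\xi(V_m)\ne\{1\}$" one only gets \emph{some} point with $\xi$-image $\ne 1$, and a priori $|1-\xi(y_m)|$ could be tiny, so a single term per block contributes almost nothing. The fix is exactly where local generation is essential: because $X=\langle V_m\rangle$, for the \emph{fixed} element $\xi$ there must be $z\in X$ with $\xi(z)\notin\S_+$, and $z$ is a sum of finitely many elements of $V_m$ (and of $-V_m=V_m$); those summands all lie in $V_m$ and the corresponding partial sums of $|1-\xi(\cdot)|$ along this chain must, by the triangle inequality in $\S$, total at least $|1-\xi(z)|\ge |1-e^{i\pi/2}|$. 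So each block should be taken to be (the list of partial-sum increments of) such a representation $z=v_1+\cdots+v_{\ell_m}$ with $v_j\in V_m$ and $\xi(z)\notin\S_+$; this guarantees a uniform lower bound $\ge\sqrt2$ (or any fixed positive constant) on the block's contribution while keeping all terms inside $V_m$. Summing over $m\in\N$ then yields the divergence. I expect the write-up mainly needs care in: (i) producing, for each $m$, the element $z_m\in X$ with $\xi(z_m)\notin\S_+$ and its $V_m$-representation (pure use of local generation plus "$\S_+$ contains no non-trivial subgroup"), and (ii) checking the concatenated sequence is genuinely null, which is immediate from $V_{m}\downarrow e$ and the blocks being finite.
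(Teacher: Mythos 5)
Your argument is correct and is essentially the paper's: part (a) is the contrapositive form of the paper's quotient-by-a-proper-open-subgroup argument, and the ``fix'' in your final paragraph for part (b) --- writing a fixed element $z$ with $\xi(z)\notin\S_+$ as a sum of elements of $V_m$ and concatenating these blocks into a null sequence --- is exactly the paper's construction. The only cosmetic difference is that the paper takes any $x$ with $\xi(x)\ne 1$ and concludes directly that the partial products $\prod_{k\le n}\xi(x_k)$ are not Cauchy (at consecutive block boundaries they differ by the fixed factor $\xi(x)$), which avoids both the detour through the $\ell^{1}$-characterization of $\Gamma_{\rm abs}(X)$ and the need to arrange $\xi(z)\notin\S_+$.
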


 \begin{proof} $(a)$ Take a symmetric open $V\in \mathcal N(X)$ and let $H:=\langle V\rangle$.  Then $H$ is open in $X$ and so, the quotient group $X/H$ is discrete. Now from
Lemma \ref{wild1} $(b)$ we get that the discrete group  $X/H\in \(SB)$. This implies that $X/H$ is a singleton. Consequently,       $H=X$ and so, $X$ is locally generated.
       \par
       $(b)$ Take a character $\xi \in X^\wedge \setminus\{1\}$. In order to prove that $\xi \notin \Gamma_{abs}(X)$ we must find a sequence $(x_n)_{n\in \mathbb N}\in c_0(X)$ such that
       $$
       \left(\prod_{k=1}^n\xi (x_k)\right)_{n\in \mathbb N}
       $$
      is not  convergent in $\mathbb S$.
\par
As  $\xi \in X^\wedge \setminus\{1\}$, there is $x\in X$ such that $\xi(x)\ne 1$. Let  $\{V_1, V_2,... \}$ be a basis for $\mathcal N(X)$ such that $V_n\supset V_{n+1},n=1,2,\dots$.  Since $X$ is locally  generated,
$$
X = \bigcup_{k= 1}^{\infty}  (V_n + \cdots^{k\,\text{summands}} + V_n),\quad \forall n\in \N\,.
$$
       Thus for a given $n\in \N$ we can find $k_n\in \N$ such that
$$
x\in V_n + \cdots^{k_n\,\text{summands}} + V_n\,.
$$
Therefore, %for a given $n\in \N$ we can find $k_n\in \N$ and
we can also find a finite sequence  $x_{n, 1},\dots, x_{n, k_n}$ such that
$$
x_{n, i} \in V_n,\,i=1,\dots,k_n
$$
and
$$
x=\sum_{i=1}^{k_n}x_{n, i}\,.
$$
Let
$$
m_0=0,\,\,m_n:=\sum _{i=1}^{n}k_i,\quad n=1,2,\dots
$$
Define now a sequence $(x_j)_{j\in \N}$ as follows: find for $j\in \N$ the unique $n\in \N$ with $m_{n-1}<j\le m_n$ and put
$$
x_j:=x_{n,j-m_{n-1}}\,.
$$
Clearly,
$$
x_1=x_{1,1},\dots,x_{m_1}=x_{1,m_1},x_{m_1+1}=x_{2,1},\dots,x_{m_2}=x_{2,k_2},x_{m_2+1}=x_{3,1},\dots,x_{m_3}=x_{3,k_3},\dots
$$
and

%$$
%x_n=x_{1,n},n=1,\dots,m_1;\,x_n=x_{2,n},n=m_1+1,\dots,m_2;\,\dots
%$$
%Clearly,
$$
x_j\in V_n,\,j=m_{n-1}+1,\dots, m_{n};\,\,n=1,2,3,\dots
$$
The last relation, since $m_n\to \infty$ and $(V_n)_{n\in \N}$ is a decreasing basis for $\mathcal N(X)$, implies that the sequence $(x_j)_{j\in \N}$ converges to zero in $X$. Now,
$$
 \prod_{j=m_{n-1}+1}^{m_{n}} \xi(x_j)=\prod_{i=1}^{k_n} \xi(x_{n,i}) = \xi\left(\sum _{i=1}^{k_n}x_{n,i}\right) = \xi(x)\ne 1,\,\,n=1,2,3,\dots$$

Consequently, $\left(\prod_{j=1}^{n}\xi(x_j)\right)_{n\in \mathbb N}$ is not a Cauchy sequence in  $\S$
  and hence  it is not convergent.
  %does not converge in  $\S$.
\end{proof}

Since connected groups are locally generated, we obtain:

\begin{corollary}\label{Coro_dikconj} A metrizable abelian group $X\in \(SB)$ iff $X$ is locally generated.  In particular, $\(SB)$ contains all connected metrizable groups.
            \end{corollary}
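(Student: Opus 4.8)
The plan is to derive Corollary~\ref{Coro_dikconj} as an immediate consequence of Theorem~\ref{dikconj} together with the standard fact, recorded in Remark (a) after Definition~\ref{Def_loc_gen}, that every connected topological group is locally generated. Concretely, I would argue as follows.

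\emph{Proof.} Let $X$ be a metrizable abelian topological group. If $X\in\(SB)$, then by Theorem~\ref{dikconj}(a) the group $X$ is locally generated (this half does not even use metrizability). Conversely, if $X$ is locally generated, then since $X$ is also metrizable, Theorem~\ref{dikconj}(b) applies and gives $X\in\(SB)$. This proves the stated equivalence: a metrizable abelian group $X$ lies in $\(SB)$ if and only if $X$ is locally generated. For the last assertion, recall that every connected topological group has no proper open subgroups, hence is locally generated (see Remark~(a) following Definition~\ref{Def_loc_gen}, citing \cite[Theorem II.7.4]{HR}). Applying the equivalence just established to a connected metrizable abelian group $X$, we conclude $X\in\(SB)$; thus $\(SB)$ contains all connected metrizable abelian groups. $\ \Box$

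There is essentially no obstacle here: the corollary is a bookkeeping step that packages the two implications of Theorem~\ref{dikconj} into a single biconditional and then specializes to the connected case via a classical fact. All the genuine work — in particular the construction in the proof of Theorem~\ref{dikconj}(b) of a null sequence $(x_n)$ for which $\bigl(\prod_{k=1}^n\xi(x_k)\bigr)_n$ fails to be Cauchy, obtained by writing a fixed element $x$ with $\xi(x)\neq 1$ as a sum of $k_n$ summands from $V_n$ and concatenating these blocks — has already been carried out before the corollary is stated. Hence the only thing to be careful about is citing the correct parts of Theorem~\ref{dikconj} (part (a) for the ``only if'' direction, part (b) for the ``if'' direction, the latter needing metrizability) and invoking the connectedness-implies-locally-generated fact for the final sentence.
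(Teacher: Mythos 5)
Your proposal is correct and matches the paper exactly: the corollary is obtained by combining Theorem~\ref{dikconj}(a) and (b) with the classical fact that connected groups are locally generated (\cite[Theorem II.7.4]{HR}), which is precisely the one-line derivation the paper gives. Your citations of the relevant parts and the role of metrizability in part (b) are accurate.
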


       \begin{remark}{\em  Since $\(SB)$ contains all minimally almost periodic groups  (see Lemma \ref{wild1} (c)), from  Theorem \ref{dikconj}$(a)$ we conclude that a minimally almost periodic group is necessarily locally generated. In \cite[p. 21]{DPS} this observation is used for producing a Hausdorff group topology $\tau$ in $\Z^{(\N)}$ such that $(\Z^{(\N)},\tau)$ is minimally almost periodic.}
\end{remark}

The following theorem  implies, in particular, that the metrizability assumption can be removed from Theorem \ref{dikconj} $(b)$ in the locally compact case (however this cannot be done in general, see Remark \ref{5oct})).

\begin{theorem}\label{dikTh} For a locally compact abelian group $X$ TFAE:
\begin{itemize}
    \item[(i)] $X\in \(SB)$.
    \item[(ii)] $X$ is locally generated.
    \item[(iii)] $X$ is connected.
\end{itemize}
\end{theorem}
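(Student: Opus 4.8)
The plan is to prove the cycle of implications $(iii)\Rightarrow(ii)\Rightarrow(i)\Rightarrow(iii)$ for a locally compact abelian group $X$, using the structure theory of LCA groups to upgrade the metrizable result of Theorem \ref{dikconj}. The implication $(iii)\Rightarrow(ii)$ is immediate from Remark (a) after Definition \ref{Def_loc_gen}: connected topological groups are locally generated. Likewise $(i)\Rightarrow(ii)$ is Theorem \ref{dikconj}$(a)$, and $(ii)\Rightarrow(iii)$ in the locally compact setting is again the cited fact that a locally generated locally compact group is connected (\cite[Corollary II.7.9]{HR}). So the only real content is $(iii)\Rightarrow(i)$, or equivalently (combining with the trivial arrows) $(ii)\Rightarrow(i)$: a connected locally compact abelian group belongs to $\(SB)$.

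To prove that a connected LCA group $X$ lies in $\(SB)$, first I would reduce to the compactly generated case. Every LCA group $X$ has an open, hence also closed, compactly generated subgroup $H$; when $X$ is connected $H=X$, so $X$ itself is compactly generated. By the structure theorem for compactly generated LCA groups, $X\cong \R^n\times K$ with $K$ compact (the $\Z^m$ factor disappears by connectedness, since $\Z^m$ is totally disconnected and a direct summand would destroy connectedness), and moreover $K$ must itself be connected. Now $\R^n$ is metrizable and connected, hence in $\(SB)$ by Corollary \ref{Coro_dikconj}. If I can show the compact connected group $K$ lies in $\(SB)$, then $X\cong\R^n\times K\in\(SB)$ by Proposition \ref{DikLemma2}, which states $\(SB)$ is closed under arbitrary products.

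It remains to handle a connected compact abelian group $K$. The key tool is that a compact connected abelian group $K$ is a quotient of a product of circles: more precisely, writing $\widehat K$ for its (discrete, torsion-free since $K$ is connected) dual, there is a free abelian group surjecting onto $\widehat K$, which dualizes to a topological embedding of $K$ as a closed subgroup of some power $\S^{I}$; even better, $K$ is a quotient of $\S^I$ via the dual of an inclusion $\widehat K\hookrightarrow \Q^{(I)}\to$ a suitable divisible hull, but the cleanest route is: $K$ is (topologically isomorphic to) the dual of the discrete group $\widehat K$, and choosing a presentation of $\widehat K$ as a quotient of a free abelian group $\Z^{(I)}$ yields, by duality, a continuous injection — in fact embedding onto a closed subgroup — $K\hookrightarrow \S^{I}$. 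Since $\S\in\(SB)$ (Remark (b) after Definition \ref{SB}, via Proposition \ref{bolo2oq1}), Proposition \ref{DikLemma2} gives $\S^{I}\in\(SB)$; but $\(SB)$ is only known to pass to \emph{dense} continuous images (Lemma \ref{wild1}$(b)$), not to arbitrary closed subgroups, so embedding $K$ into $\S^I$ is the wrong direction. Instead I would use that $K$, being compact connected, is a \emph{quotient} of $\S^{I}$: the divisible hull argument shows the torsion-free group $\widehat K$ embeds into a $\Q$-vector space, hence (choosing a basis) into some $\Q^{(I)}$, and composing with $\Z^{(I)}\hookrightarrow\Q^{(I)}$... — more directly, any torsion-free abelian group of rank $\le|I|$ is a subgroup of $\Q^{(I)}$, and $\Q^{(I)}$ is the dual of the compact connected group $\widehat{\Q^{(I)}}$; dualizing $\widehat K\hookrightarrow \Q^{(I)}$ gives a continuous surjection $\widehat{\Q^{(I)}}\twoheadrightarrow K$ with $\widehat{\Q^{(I)}}$ compact connected. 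This reduces matters to showing $\widehat{\Q}\in\(SB)$ (then use Proposition \ref{DikLemma2} and Lemma \ref{wild1}$(b)$), i.e. that the $\mathbf{a}$-adic-type solenoid $\widehat{\Q}$ lies in $\(SB)$.

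The main obstacle, then, is the base case: verifying that the solenoid $\Sigma=\widehat{\Q}$ (or, more elementarily, that \emph{some} explicit compact connected group known to dominate all others) belongs to $\(SB)$. Here I expect one must argue directly: take $\xi\in\Sigma^{\wedge}\setminus\{1\}$, i.e. a nonzero rational $q\in\Q$, and exhibit a null sequence $(x_n)$ in $\Sigma$ with $\sum|1-\xi(x_n)|=\infty$ (equivalently $(\xi(x_n))\notin\mathrm{ss}(\S)$). Since $\Sigma$ is connected it is locally generated, so for any small neighborhood $V$ one can write a fixed element $z$ with $\xi(z)\ne 1$ as a sum of finitely many elements of $V$ — this is exactly the mechanism in the proof of Theorem \ref{dikconj}$(b)$, and the only reason that proof needed metrizability was to get a \emph{countable} decreasing basis $(V_n)$ so that the concatenated sequence tends to $0$. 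For the specific group $\Sigma=\widehat\Q$ — which \emph{is} metrizable, since $\Q$ is countable — Corollary \ref{Coro_dikconj} applies verbatim and gives $\Sigma\in\(SB)$ with no extra work. That is the cleanest path, and I would organize the write-up to exploit it: reduce the general connected LCA group to $\R^n\times(\text{quotient of }\widehat{\Q}^{(I)})$, invoke Corollary \ref{Coro_dikconj} for the metrizable building blocks $\R$ and $\widehat\Q$, and close up under products and dense images via Proposition \ref{DikLemma2} and Lemma \ref{wild1}. The subtle point to get right is the structural claim that every connected compact abelian group is a continuous homomorphic image of a power of the (metrizable) solenoid $\widehat\Q$ — this is where I would be most careful, phrasing it through the embedding of a torsion-free abelian group into a $\Q$-vector space and Pontryagin duality.
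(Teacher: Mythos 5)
Your proposal is correct, but for the only nontrivial implication $(iii)\Rightarrow(i)$ it takes a genuinely different route from the paper. The paper avoids the structure theorem entirely: given $\xi\in\Gamma_{\rm abs}(X)$ it considers the set $A=\bigcup_{\varphi\in CHom(\R,X)}\varphi(\R)$ of points on one-parameter subgroups, notes that each $\varphi(\R)$ is a dense continuous image of the connected metrizable group $\R\in\(SB)$ (so $\xi$ vanishes on it by Lemma \ref{wild1}$(b)$ and Theorem \ref{dikconj}$(b)$), and then invokes \cite[Theorem 25.20]{HR}, which says that for a connected LCA group $\langle A\rangle$ is dense; hence $\xi=1$. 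Thus the paper needs only the single metrizable building block $\R$ and the dense-image stability of $\(SB)$. You instead reduce via the structure theorem to $\R^n\times K$ with $K$ compact connected, and then present $K$ as a continuous surjective image of $(\widehat{\Q})^I$ by embedding the torsion-free discrete dual $\widehat K$ into its divisible hull $\Q^{(I)}$ and dualizing (the surjectivity of the dual map coming from divisibility of $\S$); your building blocks are $\R$ and the metrizable solenoid $\widehat{\Q}$, closed up under arbitrary products (Proposition \ref{DikLemma2}) and dense images (Lemma \ref{wild1}$(b)$). All the steps you use — connected implies compactly generated, the decomposition $\R^n\times K$, connectedness of $K$, the duality argument — are sound, so your argument is a complete proof; it costs more machinery (structure theorem, Pontryagin duality, product stability) but is equally rigorous, whereas the paper's one-parameter-subgroup argument is shorter and uses only one nontrivial fact from \cite{HR}.
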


\begin{proof}
$(i)\Longrightarrow (ii)$. By Theorem \ref{dikconj} $(a)$, (i) implies that $X$ is locally generated.
\par
$(ii)\Longrightarrow (iii)$. Follows from  \cite[Corollary (7.9)]{HR}.

$(iii)\Longrightarrow (i)$ Take $ \xi\in \Gamma_{\rm{abs}}(X)$ and let us verify that $\xi=1$.
\par
Consider the set
$$
A=\bigcup_{\varphi\in CHom(\R,X)}\varphi(\R)\,.
$$
Let us see first that
\begin{equation}\label{25seq}
\xi|_A=1\,.
\end{equation}
Fix $\varphi\in CHom(\R,X)$ and set $H=\varphi(\R)$. Clearly $\xi|_H \in \Gamma_{\rm{abs}}(H)$. By Theorem \ref{dikconj} $(b)$, $\R\in \(SB)$.   By Lemma \ref{wild1}$(b)$,  $H=\varphi(\R)\in \(SB)$ too. Therefore, $\xi|_H \in \Gamma_{\rm{abs}}(H)=\{1\}$ and $\xi|_H=1$. Consequently (\ref{25seq}) is proved.

 Clearly (\ref{25seq}) implies:
\begin{equation}\label{25seq2}
\xi|_{\langle A\rangle}=1\,.
\end{equation}
Now, according to \cite[Theorem 25.20 (p.410)]{HR} the connectedness of $X$ implies that $\langle A\rangle$ is a {\it dense} subgroup of $X$. From this and (\ref{25seq2}) we obtain that $\xi=1$. \end{proof}

\begin{remark}\label{5oct} {\em Local compactness is essential for the implication $(iii)\Longrightarrow (i)$ of Theorem \ref{dikTh}. Indeed, that
implication may fail in general even for a pseudocompact group $X$. In fact, according to \cite[Corollary 2.10]{GaGa}  (see also  \cite[Remark 3.4]{Tk}) there exists an infinite connected pseudocompact abelian group $X$ which contains no non-trivial convergent sequence. For such a group we have that $c_0(X)=X^{(\N)}$; hence $\Gamma_{\rm{abs}}(X)=X^{\wedge}$ and so, $X\not\in \(SB)$. Consistent examples of connected countably compact groups without infinite compact subsets can be found in \cite[Corollary 2.21]{DikSh} (note that in both cases the groups are
%the countably compact groups
 sequentially complete).
}
\end{remark}
\begin{remark}\label{bourEr}{\em From the proof of Theorem \ref{dikconj}$(b)$ and implication $(ii)\Longrightarrow (i)$ of Theorem \ref{dikTh} it follows that {\it if  a locally generated Hausdorff topological abelian group $X$ is either complete metrizable or locally compact, then $c_0(X)\ne ss(X)$.} According to  \cite[Ch. III. 5, Exercise 6 (b)(p. 315)]{{BourI}} if $X\ne\{0\}$ is  an arbitrary locally generated  {\it complete} Hausdorff topological abelian group, then $ l(X) \ne  \rm{ss}(X)$ and hence, $c_0(X)\ne \rm{ss}(X)$ as well. Remark \ref{5oct} implies that
similar conclusion may fail for a connected (hence locally generated)  {\it sequentially complete} Hausdorf pseudocompact abelian group. }
\end{remark}

Another class  of non-metrizable groups in $\(SB)$ can be obtained from Proposition \ref{DikLemma2}.

  %The converse of lemma 2.8 also holds(\footnote{I have included this statement in the lemma(Vaja).}) %$\clubsuit$
%%%%%%%%%%%%%%%%%%%%%%%%%%%%%%%%%%%%%%%%%%%%%%%%%%%%%%%%%%%%%

   \section{Applications of the class $\(SB)$}

\subsection{Groups with countable dual}

 \begin{Pro}\label{elenaN}
 Let $X\ne \{0\}$ be a compact abelian group and $G:=(c_0(X), \mathfrak{u}_0)$. We have:
  \begin{itemize}
    \item[(a)] If $X$ is connected, then $\mbox{{\rm Card} }G^{\wedge}=\mbox{{\rm Card} } X^{\wedge}$.
    \item[(b)] If $X$ is connected and metrizable, then $ \mbox{{\rm Card} }G^{\wedge}=\aleph_0$.
    \item[(c)] If  $X$  a metrizable and disconnected,  then $\mbox{{\rm Card} } G^{\wedge}=\cont$.
\end{itemize}
  \end{Pro}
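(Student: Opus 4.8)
The plan is to derive the three assertions from the structure theory of the class $\(SB)$ developed above, reserving the real work for item (c). The connected cases (a) and (b) will follow almost at once from Corollary \ref{NewNew} and the description of $\(SB)$ among locally compact groups (Theorem \ref{dikTh}); for the disconnected case (c) I will produce $\cont$ many pairwise distinct elements of the $\beta$-dual $c_0(X)^\beta$ and then invoke Proposition \ref{elenaN1}(c), which — since a compact metrizable group is complete metrizable — identifies $c_0(X)^\beta$ with $G^{\wedge}$ through $\chi$.

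For (a): being compact, $X$ is both precompact and locally compact, and being connected it lies in $\(SB)$ by Theorem \ref{dikTh}. As $X$ is moreover non-trivial, Corollary \ref{NewNew} applies and gives $\mathrm{Card}\,G^{\wedge}=\mathrm{Card}\,X^{\wedge}$. For (b) it then remains to compute $\mathrm{Card}\,X^{\wedge}$: the compact metrizable group $X$ has countable dual by Proposition \ref{codu1}, and since $X$ is non-trivial and connected it is infinite, so $X^{\wedge}$ is infinite as well (an infinite compact group has infinite dual, as $X\cong(X^{\wedge})^{\wedge}$ by Pontryagin duality and the dual of a finite group is finite). Hence $\mathrm{Card}\,X^{\wedge}=\aleph_0$, and so $\mathrm{Card}\,G^{\wedge}=\aleph_0$.

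For (c) I will establish the two inequalities $\mathrm{Card}\,G^{\wedge}\le\cont$ and $\mathrm{Card}\,G^{\wedge}\ge\cont$. The upper bound is pure counting: by Proposition \ref{elenaN1}(b) together with the injectivity of $\chi$ (Lemma \ref{6apr}), $G^{\wedge}$ embeds in $c_0(X)^\beta\subseteq(X^{\wedge})^{\mathbb N}$, and since $X^{\wedge}$ is countable this last set has cardinality at most $\aleph_0^{\aleph_0}=\cont$. For the lower bound I first note that, $X$ being disconnected and locally compact, Theorem \ref{dikTh} forces $X\notin\(SB)$, i.e. $\Gamma_{\mathrm{abs}}(X)\ne\{1\}$; fix once and for all a character $\xi\in\Gamma_{\mathrm{abs}}(X)$ with $\xi\ne 1$. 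For every subset $S\subseteq\mathbb N$ put $\mathbf h^{(S)}=(\xi^{(S)}_n)_{n\in\mathbb N}$, where $\xi^{(S)}_n=\xi$ for $n\in S$ and $\xi^{(S)}_n=1$ otherwise. The claim is that $\mathbf h^{(S)}\in c_0(X)^\beta$ for every $S$: using the absolute-summability description of $\Gamma_{\mathrm{abs}}$ from the Remark after Definition \ref{SB} (which rests on (\ref{rd2})), the condition $\xi\in\Gamma_{\mathrm{abs}}(X)$ means $\sum_{n}|1-\xi(x_n)|<\infty$ for every $\mathbf x=(x_n)_{n\in\mathbb N}\in c_0(X)$, whence $\sum_n|1-\xi^{(S)}_n(x_n)|=\sum_{n\in S}|1-\xi(x_n)|<\infty$ and therefore $(\xi^{(S)}_n(x_n))_{n\in\mathbb N}\in l(\S)\subseteq\mathrm{ss}(\S)$, which is precisely membership of $\mathbf h^{(S)}$ in $c_0(X)^\beta$. (Equivalently, one may start from the constant sequence $(\xi,\xi,\dots)\in c_0(X)^\beta$ and observe that inserting trivial characters into a member of $c_0(X)^\beta$ again produces a member of $c_0(X)^\beta$.) Since $\xi\ne 1$, distinct subsets $S$ yield distinct sequences $\mathbf h^{(S)}$, so $\mathrm{Card}\,c_0(X)^\beta\ge 2^{\aleph_0}=\cont$; as $X$ is complete metrizable, Proposition \ref{elenaN1}(c) gives $\mathrm{Card}\,G^{\wedge}=\mathrm{Card}\,c_0(X)^\beta\ge\cont$. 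Combining the two bounds, $\mathrm{Card}\,G^{\wedge}=\cont$.

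The only step with genuine content is the lower bound in (c): manufacturing continuum many distinct characters of $G$ out of a single nontrivial element of $\Gamma_{\mathrm{abs}}(X)$ by spreading it over the subsets of $\mathbb N$, the existence of such an element being exactly where disconnectedness enters (via Theorem \ref{dikTh}). Everything else is bookkeeping with the already established identifications of $G^{\wedge}$ with the $\beta$-dual.
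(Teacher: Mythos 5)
Your proof is correct and follows essentially the same route as the paper's: for (a) and (b) the combination of Theorem \ref{dikTh} (resp.\ Theorem \ref{dikconj}(b)) with Corollary \ref{NewNew} and Proposition \ref{codu1}, and for (c) the family $\{1,\xi\}^{\mathbb N}\subset c_0(X)^{\beta}$ generated by a nontrivial $\xi\in\Gamma_{\mathrm{abs}}(X)$ together with Proposition \ref{elenaN1}(c). The only difference is that you also spell out the upper bound $\mathrm{Card}\,G^{\wedge}\le\cont$ in (c), which the paper leaves implicit.
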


  \begin{proof}  $(a)$ follows from Corollary \ref{NewNew} via implication $(iii)\Longrightarrow (i)$ of Theorem \ref{dikTh}.
 \par
 $(b)$ follows from  Theorem \ref{dikconj} $(b)$,  and the equality $\rm{Card } X^{\wedge} = \aleph_0$ ( Proposition \ref{codu1}).
\par
 $(c)$  It is easy to verify that
\begin{equation}\label{kar}
\xi\in \Gamma_{\rm{abs}}(X)\,\Longrightarrow\, \{1,\xi\}^{\N}\subset
c_0(X)^\beta\,.
\end{equation}
Since $X$ is {\it compact metrizable}, %from (\ref{kar})
 by Proposition \ref{elenaN1} $(c)$ we get:
\begin{equation}\label{kar2}
\xi\in \Gamma_{\rm{abs}}(X)\,, {\bf h}\in \{1,\xi\}^{\N} \Longrightarrow\, \chi_{{\bf h}}\in G^{\wedge}\,.
\end{equation}
 Now by Theorem \ref{dikTh}, $X\not \in \(SB)$  and consequently  there exists $\xi \in \Gamma_{\rm{abs}}(X)$, with $\xi\ne 1$. Then $\mbox{Card } \{1,\xi\}^{\N}=\cont$. Therefore,
$$
\{\chi_{{\bf h}}:{\bf h}\in \{1,\xi\}^{\N}\}\subset G^{\wedge}
$$
and since the correspondence ${\bf h}\mapsto \chi_{{\bf h}}$ is injective, we get that $\mbox{Card } G^{\wedge}\ge \cont$.
\end{proof}

The following statement shows that Proposition \ref{elenaN}$(b)$ is the best possible in the class of locally compact groups.

   \begin{Pro}\label{dikrfor2}
For an infinite  locally compact Hausdorff topological abelian group $X$ TFAE:
\begin{itemize}
     \item[(i)]  $X$ is  compact connected and metrizable.
     \item[(ii)]  $\mbox{{\rm Card} }(c_0(X), \mathfrak{u}_0)^{\wedge}=\aleph_0$.
\end{itemize}
%$\rm{Card}\left((c_0(X),\mathfrak{u}_0)^{\wedge}\right)<\cont$.
\end{Pro}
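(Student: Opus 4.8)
The plan is to prove the equivalence $(i)\Longleftrightarrow(ii)$ by reducing it, via Pontryagin duality and the structure theory of locally compact abelian (LCA) groups, to the already established Proposition \ref{elenaN}. The implication $(i)\Longrightarrow(ii)$ is immediate: it is exactly Proposition \ref{elenaN}$(b)$. So the entire content lies in $(ii)\Longrightarrow(i)$, and the strategy is to show that if $X$ is an infinite LCA group that fails to be compact, connected, and metrizable, then $\mbox{{\rm Card}}(c_0(X),\mathfrak{u}_0)^{\wedge}$ is \emph{not} $\aleph_0$ — it is either uncountable or, a priori, it could be finite, but we will rule that out too.

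First I would dispose of the easy obstructions. If $X$ is not metrizable, I argue that $(c_0(X),\mathfrak{u}_0)$ has uncountable dual. If moreover $X$ is \emph{compact} non-metrizable, then $X\notin\(SB)$ is not the issue; rather one uses that $\mbox{{\rm Card}}(X^\wedge)>\aleph_0$ (the Remark after Proposition \ref{codu1}) together with the injective homomorphism $\chi:(X^{\wedge})^{(\mathbb N)}\to G^{\wedge}$ of Proposition \ref{elenaN1}$(a)$, which already forces $\mbox{{\rm Card}}\,G^{\wedge}\ge\mbox{{\rm Card}}\,X^{\wedge}>\aleph_0$. For $X$ locally compact but not compact, I would pass to an open compact-by-$\R^n$-by-discrete decomposition: by the structure theorem $X\cong\R^n\times X_0$ where $X_0$ has a compact open subgroup $K$. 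If $n\ge 1$, then $X$ contains $\R$ as a (closed) subgroup and a retract up to the relevant quotients; more directly, $\R^\wedge\cong\R$ is uncountable and a continuous surjection $X\to\R$ (projection) dualizes to an injection $\R^\wedge\hookrightarrow X^\wedge$, so again $\mbox{{\rm Card}}\,X^\wedge\ge\cont$ and the $\chi$-embedding gives $\mbox{{\rm Card}}\,G^\wedge\ge\cont$. If $n=0$ and $X$ is non-compact with compact open $K$, then $X/K$ is an infinite discrete group, hence $X$ has a non-trivial totally disconnected open subgroup; here I would invoke Proposition \ref{elenaN1}$(c)$ applied after checking $(c_0(X),\mathfrak u_0)$ is Baire — but $X$ non-compact need not be complete. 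Instead the cleaner route: an infinite discrete quotient $X/K$ has uncountable dual $(X/K)^\wedge$ (it is a compact non-metrizable group when $X/K$ is uncountable, or if $X/K$ is countably infinite then $(X/K)^\wedge$ is an infinite compact metrizable group, still of cardinality $\cont$), which injects into $X^\wedge$, so $\mbox{{\rm Card}}\,X^\wedge\ge\cont$ and once more $\mbox{{\rm Card}}\,G^\wedge\ge\cont$ via $\chi$.

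Next I would handle the remaining case: $X$ compact metrizable but \emph{disconnected}. This is precisely Proposition \ref{elenaN}$(c)$, which gives $\mbox{{\rm Card}}\,G^\wedge=\cont>\aleph_0$. Thus in every case where $(i)$ fails, $(ii)$ fails — more strongly, $\mbox{{\rm Card}}\,G^\wedge\ge\cont$ — which gives the contrapositive of $(ii)\Longrightarrow(i)$. It remains only to observe that the dichotomy "$\aleph_0$ vs $\ge\cont$" is exhaustive here: whenever $X$ is infinite, $X^\wedge$ is infinite (for compact $X$ this is $(24.14)$ of \cite{HR} in one direction; in general an infinite LCA group has infinite dual), hence $(X^\wedge)^{(\mathbb N)}$ is infinite, hence $G^\wedge$ is infinite by Proposition \ref{elenaN1}$(a)$, so the finite case cannot occur.

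I expect the main obstacle to be the non-compact totally-disconnected subcase, because $(c_0(X),\mathfrak u_0)$ is then not guaranteed to be Baire, so Proposition \ref{elenaN1}$(c)$ (the identification $G^\wedge=\{\chi_{\bf h}:{\bf h}\in c_0(X)^\beta\}$) is unavailable and one only has the \emph{lower} bound from the honest embedding $\chi:(X^{\wedge})^{(\mathbb N)}\hookrightarrow G^\wedge$ of part $(a)$. Fortunately a lower bound of $\cont$ is all that is needed to contradict countability, and that lower bound follows already from $\mbox{{\rm Card}}\,X^\wedge\ge\cont$, which holds because a non-compact LCA group with compact open subgroup $K$ has infinite discrete quotient $X/K$ and Proposition \ref{codu1} (or its remark) forbids such a quotient from having countable dual unless it is finite — and $X/K$ infinite discrete has $(X/K)^\wedge$ an infinite compact group, of cardinality $\cont$. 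So the whole argument is assembled from Proposition \ref{codu1}, its remark, Proposition \ref{elenaN1}$(a)$, Proposition \ref{elenaN}, and the LCA structure theorem, with no genuinely new computation required.
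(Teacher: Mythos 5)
Your proof is correct and follows essentially the same route as the paper: both reduce $(ii)\Longrightarrow(i)$ to the lower bound $\mbox{{\rm Card}}\,G^{\wedge}\ge \mbox{{\rm Card}}\,X^{\wedge}$ (the paper via the dual of the first projection $c_0(X)\to X$, you via the embedding $\chi$ of Proposition \ref{elenaN1}$(a)$ — either works), then deduce that $X$ is compact metrizable, and finish off connectedness with Proposition \ref{elenaN}$(c)$. The one difference is that your case analysis through the LCA structure theorem merely re-derives what Proposition \ref{codu1} already supplies in one step (a countable dual forces an infinite locally compact group to be compact metrizable), so that detour — and the worry about Baire-ness in the totally disconnected non-compact case — can be bypassed by citing Proposition \ref{codu1} directly.
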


\begin{proof} $(i)\Longrightarrow (ii)$ by Proposition \ref{elenaN}$(b)$.\\
$(ii)\Longrightarrow (i)$. Let us see first that $X$ is  compact  and metrizable. Write: $G:=(c_0(X), \mathfrak{u}_0)$. Let $\varphi:G\to X$ be the first projection, i.e., the mapping which sends $(x_n)_{n\in \N}$ to $x_1$. Then $\varphi^{\wedge}:X^{\wedge}\to G^{\wedge}$ is injective. So, $\mbox{{\rm Card} } G^{\wedge}=\aleph_0$ implies that
$\mbox{{\rm Card} } X^{\wedge}\le \aleph_0$. From this by Proposition \ref{codu1} we get that $X$ is compact metrizable. Then  by Proposition \ref{elenaN}$(c)$ the equality $\mbox{{\rm Card} } (c_0(X), \mathfrak{u}_0)^{\wedge} =\aleph_0$ implies that $X$ is connected.
\end{proof}

The connected groups in the class of all compact metrizable abelian groups can be characterized also as follows:

\begin{Pro}\label{dikrfor} For an infinite  compact metrizable abelian group $X$ TFAE:
\begin{itemize}
    \item[(i)]  $X$ is connected.
    \item[(ii)] $(c_0(X),\mathfrak{u}_0)^{\wedge}=(X^{\wedge})^{(\N)}$;
    \item[(iii)]  $\mbox{{\rm Card} } (c_0(X),\mathfrak{u}_0)^{\wedge}=\aleph_0$.
    \item[(iv)] $\mbox{{\rm Card} } (c_0(X),\mathfrak{u}_0)^{\wedge}<\cont$.
\end{itemize}
\end{Pro}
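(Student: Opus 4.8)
\textbf{Proof proposal for Proposition \ref{dikrfor}.}

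The plan is to prove the cycle of implications $(i)\Rightarrow(ii)\Rightarrow(iii)\Rightarrow(iv)\Rightarrow(i)$, harvesting almost everything from the machinery already assembled. First I would observe that an infinite compact metrizable abelian group $X$ has $X^{\wedge}$ countably infinite by Proposition \ref{codu1}, so in particular $(X^{\wedge})^{(\N)}$ is countably infinite; this will be used silently throughout. For $(i)\Rightarrow(ii)$: if $X$ is connected and metrizable then $X\in\(SB)$ by Theorem \ref{dikconj}$(b)$ (connected groups are locally generated by Remark (a) after Definition \ref{Def_loc_gen}, or invoke Corollary \ref{Coro_dikconj} directly), and $X$ is compact hence precompact, so Theorem \ref{NewTh} gives that $\chi:(X^{\wedge})^{(\N)}\to G^{\wedge}$ is a group isomorphism, i.e. $(c_0(X),\mathfrak u_0)^{\wedge}=(X^{\wedge})^{(\N)}$ under the canonical identification. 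The implication $(ii)\Rightarrow(iii)$ is then immediate from $\mathrm{Card}\,(X^{\wedge})^{(\N)}=\aleph_0$, and $(iii)\Rightarrow(iv)$ is trivial since $\aleph_0<\cont$.

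The only implication carrying real content is $(iv)\Rightarrow(i)$, and for that I would argue by contraposition: assume $X$ is disconnected. Then $X$ is compact metrizable and disconnected, so Proposition \ref{elenaN}$(c)$ applies verbatim and yields $\mathrm{Card}\,G^{\wedge}=\cont$, contradicting $(iv)$. Thus $(iv)$ forces $X$ connected. In fact this route also re-proves $(iii)\Rightarrow(i)$ and could be used to streamline the cycle, but the stated cyclic order is the cleanest bookkeeping.

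The step where all the difficulty is concentrated is the one I am allowed to import, namely Proposition \ref{elenaN}$(c)$ (equivalently, the earlier Theorem \ref{dikTh} giving $X\notin\(SB)$ for disconnected locally compact $X$, together with the Baire-category argument of Proposition \ref{elenaN1}$(c)$ that lets one realize all of $\{1,\xi\}^{\N}$ as genuine continuous characters of $(c_0(X),\mathfrak u_0)$). Granting those, the present proposition is essentially a bookkeeping corollary: the substantive dichotomy is ``connected $\Rightarrow$ dual of size $\aleph_0$'' versus ``disconnected $\Rightarrow$ dual of size $\cont$,'' with no intermediate cardinality possible, and conditions (ii), (iii), (iv) are just successively weaker shadows of the connected case. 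I would therefore keep the write-up short, citing Theorem \ref{dikconj}, Theorem \ref{NewTh} (or Corollary \ref{Coro_dikconj}), Proposition \ref{codu1}, and Proposition \ref{elenaN}, and not reproduce any of their proofs.
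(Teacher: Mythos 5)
Your proposal is correct and follows essentially the same route as the paper, which also runs the cycle $(i)\Rightarrow(ii)\Rightarrow(iii)\Rightarrow(iv)\Rightarrow(i)$, deducing the first two implications from Proposition \ref{elenaN}$(a)$ (whose proof is exactly your chain Corollary \ref{Coro_dikconj} plus Theorem \ref{NewTh}) and the last from Proposition \ref{elenaN}$(c)$. If anything, your explicit appeal to Theorem \ref{NewTh} for $(i)\Rightarrow(ii)$ is slightly more precise than the paper's citation of Proposition \ref{elenaN}$(a)$, which as stated only records the cardinality of the dual rather than the identification $(c_0(X),\mathfrak{u}_0)^{\wedge}=(X^{\wedge})^{(\N)}$.
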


\begin{proof} $(i)\Longrightarrow (ii) \Longrightarrow (iii)$ by Proposition \ref{elenaN}$(a)$. $(iii) \Longrightarrow (iv)$ is clear.\\
$(iv)\Longrightarrow (i)$ by Proposition \ref{elenaN}$(c)$.
\end{proof}

  The following theorem provides a wide class of precompact metrizable   groups which are not  Mackey  in $\rm{LQC}$, extending thus the result of  Proposition \ref{Maincorollary}.

\begin{theorem}\label{basth}
Let $X$ be an infinite  compact  connected metrizable group.  We have:
\begin{itemize}
    \item[(a)] $\left({c_0(X)}, \mathfrak{p}_0\right)^{\wedge}=\left({c_0(X)},  \mathfrak{u}_0\right)^{\wedge}$.
    \item[(b)] $\mathfrak{u}_0$ is a locally quasi-convex Polish group topology compatible for $\left({c_0(X)},  \mathfrak{p}_0\right)$.
    \item[(c)] $\left({c_0(X)},  \mathfrak{p}_0\right)$ is a precompact metrizable   group which is not a Mackey group in $\rm{LQC}$. Further, it is connected.
\end{itemize}
  \end{theorem}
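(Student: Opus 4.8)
The plan is to deduce Theorem \ref{basth} almost entirely from the machinery already assembled, with Proposition \ref{Maincorollary} serving as the blueprint for the case $X=\S$. First I would observe that since $X$ is an infinite compact connected metrizable abelian group, the group $G:=(c_0(X),\mathfrak u_0)$ is, by Proposition \ref{descr2m}(c), a complete separable metrizable (hence Polish) locally quasi-convex group, and it is non-precompact by Proposition \ref{descr1}(c$_2$); it is also connected by the last clause of Proposition \ref{descr2m}(c). These are the ``ambient'' facts that will be reused in parts (b) and (c).

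For part (a), the key is to show $\left(c_0(X),\mathfrak p_0\right)^\wedge=\left(c_0(X),\mathfrak u_0\right)^\wedge$. Since $\mathfrak p_0\le\mathfrak u_0$ one inclusion is automatic, so I only need $\left(c_0(X),\mathfrak u_0\right)^\wedge\subseteq\left(c_0(X),\mathfrak p_0\right)^\wedge$. Fix $\kappa\in\left(c_0(X),\mathfrak u_0\right)^\wedge$. By Proposition \ref{elenaN1}(c) (applicable because $X$ is complete metrizable, so $G$ is Baire), $\kappa=\chi_{\bf h}$ for some ${\bf h}=(\xi_n)_{n\in\mathbb N}\in c_0(X)^\beta$. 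Now I invoke the description of the class $\(SB)$: by Theorem \ref{dikTh}, the connected locally compact (indeed compact) group $X$ lies in $\(SB)$, and $X$ is precompact (being compact), so Theorem \ref{NewTh} gives $c_0(X)^\beta=j(G^\wedge)=(X^\wedge)^{(\mathbb N)}$. Hence ${\bf h}\in(X^\wedge)^{(\mathbb N)}$, which means $\chi_{\bf h}$ is already continuous for the product topology $\mathfrak p_0$ (the sum defining $\chi_{\bf h}$ is finite and each $\xi_n\circ p_n$ is $\mathfrak p_0$-continuous), so $\kappa\in\left(c_0(X),\mathfrak p_0\right)^\wedge$. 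This proves (a). Alternatively, this is exactly Proposition \ref{dikrfor}, implication $(i)\Rightarrow(ii)$, together with Example \ref{kapl} identifying $\left(X^{\mathbb N},\mathfrak p\right)^\wedge$ with $(X^\wedge)^{(\mathbb N)}$.

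For part (b): $\mathfrak u_0$ is a locally quasi-convex Polish group topology by Proposition \ref{descr2m}(c) together with Proposition \ref{descr1}(c$_1$)/(b) (completeness, metrizability, separability, local quasi-convexity of $c_0(X)$ all descend from $X$); and it is compatible for $\left(c_0(X),\mathfrak p_0\right)$ precisely because of part (a), which asserts the two topologies have the same continuous characters. For part (c): $\left(c_0(X),\mathfrak p_0\right)$ is a topological subgroup of the compact metrizable group $X^{\mathbb N}$, hence is itself precompact and metrizable; it is connected because $X^{(\mathbb N)}$ is connected (union of the connected subgroups $G_n\cong X^n$ through the common point $0$) and $c_0(X)$ is its closure, or directly by Proposition \ref{descr2m}(c). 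Finally, $\mathfrak u_0$ is strictly finer than $\mathfrak p_0$ by Proposition \ref{descr2m}(b) (since $X\ne\{0\}$), and by (b) it is a locally quasi-convex group topology compatible for $\left(c_0(X),\mathfrak p_0\right)$; this exhibits $\left(c_0(X),\mathfrak p_0\right)$ as a group in $\rm{LQC}$ admitting a strictly finer compatible $\rm{LQC}$ topology, so it fails to be a Mackey group in $\rm{LQC}$ by Definition \ref{def1}.

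The only genuine content here is part (a), and the main obstacle — already surmounted in the earlier sections — is the identification $c_0(X)^\beta=(X^\wedge)^{(\mathbb N)}$; for $X=\S$ this was Proposition \ref{bolo2oq1}, and the whole point of introducing $\(SB)$ and proving Theorems \ref{NewTh} and \ref{dikTh} was to replace the ad hoc argument there by the structural statement ``$X$ compact connected $\Rightarrow X\in\(SB)$ $\Rightarrow$ $c_0(X)^\wedge=(X^\wedge)^{(\mathbb N)}$''. Once that is in hand, parts (b) and (c) are pure bookkeeping, transcribing the proof of Proposition \ref{Maincorollary}(b),(c) verbatim with $\S$ replaced by $X$. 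Thus the proof I would write is short: establish (a) via Theorem \ref{dikTh} $+$ Theorem \ref{NewTh} (equivalently, cite Proposition \ref{dikrfor}), then derive (b) and (c) as immediate consequences exactly as in Proposition \ref{Maincorollary}.
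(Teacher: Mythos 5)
Your proposal is correct and follows essentially the same route as the paper: part (a) is obtained from Theorem \ref{dikTh} plus Theorem \ref{NewTh} (the paper cites Proposition \ref{dikrfor}, which is just this combination packaged), and parts (b) and (c) are the same bookkeeping as in Proposition \ref{Maincorollary}, using Proposition \ref{descr2m} and the strictness $\mathfrak p_0\ne\mathfrak u_0$.
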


  \begin{proof}  $(a)$ Since $\mathfrak{p}_0\le \mathfrak{u}_0$,  we have: $\left({c_0(X)}, \mathfrak{p}_0\right)^{\wedge}\subset\left({c_0(X)},
  \mathfrak{u}_0\right)^{\wedge}$. To prove the converse  inclusion, fix an arbitrary $\kappa\in \left({c_0(X)},
  \mathfrak{u}_0\right)^{\wedge}$. By Proposition  \ref{dikrfor} there exists ${\bf h}=(\xi_n)_{n\in \mathbb N}\in ({X}^{\wedge})^{(\mathbb N)}$ such that $\kappa=\chi_{\bf h}$. Consequently, $\kappa\in \left({c_0(X)}, \mathfrak{p}_0\right)^{\wedge}$ and $(a)$ is proved.
\par
$(b)$ $\mathfrak{u}_0$ is a  locally quasi-convex Polish group topology by Proposition \ref{descr2m}. By $(a)$ it is
compatible for $\left({c_0(X)},  \mathfrak{p}_0\right)$.
\par
$(c)$ $\left({c_0(X)},  \mathfrak{p}_0\right)$ is a precompact metrizable   group because it is a topological subgroup of  the compact metrizable group $({X}^{\N},\mathfrak{p})$. It is not a Mackey group in $\rm{LQC}$ because by $(b)$ $\mathfrak{u}_0$ is a locally quasi-convex  group topology,  compatible for $\left({c_0(X)},
  \mathfrak{p}_0\right)$ and strictly finer than $\mathfrak{p}_0$  (Proposition \ref{descr1} $(a_1)$ ). Connectedness
  follows from Proposition \ref{descr2m}  $(c)$.
  \end{proof}

\subsection{Groups with uncountable dual}%: the dual of ${\mathbb S}^\mathbb N$}

The following statement shows that the topological group $({\mathbb S}^\mathbb N,\mathfrak{u})$ has a  dual much  "bigger" than $(c_0(\mathbb S),\mathfrak{u}_0)$.

\begin{Pro} Let $X\ne \{0\}$ be a compact  group, $G=({X}^\mathbb N,\mathfrak{u})$. Then ${\rm{Card} }\ CHom (G,X) \ge2^{\cont}$.
\par
In particular, if $G=\S$, then $\rm{Card }\ G^{\wedge}= 2^{\cont}$.
 \end{Pro}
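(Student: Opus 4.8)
The plan is to exhibit a large family of continuous homomorphisms $G=(X^{\N},\mathfrak u)\to X$ obtained as limits along ultrafilters of $\N$. Let $\beta\N$ denote the set of all ultrafilters on $\N$; recall that ${\rm Card}\,\beta\N=2^{\cont}$ (Posp\'{\i}\v{s}il). For $\mathcal U\in\beta\N$ and ${\bf x}=(x_n)_{n\in\N}\in X^{\N}$ the $\mathcal U$-limit $\lim_{\mathcal U}x_n$ exists and is unique, since $X$ is compact Hausdorff; put $\ell_{\mathcal U}({\bf x}):=\lim_{\mathcal U}x_n$. Because $\mathcal U$-limits commute with continuous maps and the group operation of $X$ is continuous, $\ell_{\mathcal U}\in Hom(X^{\N},X)$.

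First I would check that $\ell_{\mathcal U}\in CHom(G,X)$. Being a homomorphism, $\ell_{\mathcal U}$ is continuous as soon as it is continuous at $0$: given $W\in\mathcal N(X)$, choose $V\in\mathcal N(X)$ with $\overline V\subseteq W$ (topological groups are regular). Then $V^{\N}\in\mathcal N(G)$, and for every ${\bf x}\in V^{\N}$ one has $\{n\in\N:x_n\in\overline V\}=\N\in\mathcal U$, hence $\ell_{\mathcal U}({\bf x})\in\overline V\subseteq W$; thus $\ell_{\mathcal U}(V^{\N})\subseteq W$.

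Next I would show that the assignment $\mathcal U\mapsto\ell_{\mathcal U}$ is injective. Fix $x\in X\setminus\{0\}$ (possible as $X\neq\{0\}$). Given $\mathcal U\neq\mathcal V$ in $\beta\N$, pick $A\in\mathcal U$ with $A\notin\mathcal V$, so that $\N\setminus A\in\mathcal V$, and define ${\bf x}$ by $x_n=x$ for $n\in A$ and $x_n=0$ otherwise. Since $x_n=x$ on the member $A$ of $\mathcal U$ we get $\ell_{\mathcal U}({\bf x})=x$, while since $x_n=0$ on the member $\N\setminus A$ of $\mathcal V$ we get $\ell_{\mathcal V}({\bf x})=0$; as $x\neq0$ this forces $\ell_{\mathcal U}\neq\ell_{\mathcal V}$. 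Consequently $\{\ell_{\mathcal U}:\mathcal U\in\beta\N\}$ is a subset of $CHom(G,X)$ of cardinality $2^{\cont}$, so ${\rm Card}\,CHom(G,X)\geq 2^{\cont}$.

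Finally, for the last assertion take $X=\S$, so that $CHom(G,\S)=G^{\wedge}$ and the above yields ${\rm Card}\,G^{\wedge}\geq 2^{\cont}$; conversely ${\rm Card}\,G={\rm Card}\,\S^{\N}=\cont^{\aleph_0}=\cont$, whence ${\rm Card}\,G^{\wedge}\leq{\rm Card}\,\S^{G}=\cont^{\cont}=2^{\cont}$, and therefore ${\rm Card}\,G^{\wedge}=2^{\cont}$. The crux here is the idea of using $\mathcal U$-limits: once that is in hand, the continuity of $\ell_{\mathcal U}$ (which rests on the elementary fact that a $\mathcal U$-limit of points lying in a closed set stays in that set) and the injectivity of $\mathcal U\mapsto\ell_{\mathcal U}$ are both routine, and the cardinality count uses only the classical value ${\rm Card}\,\beta\N=2^{\cont}$.
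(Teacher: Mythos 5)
Your proof is correct and follows essentially the same route as the paper: both construct the family $\{\ell_{\mathcal U}:\mathcal U\in\beta\N\}$ of ultrafilter-limit homomorphisms, verify continuity via closed neighborhoods of $0$ and injectivity via a $\{0,x\}$-valued sequence supported on a set separating two ultrafilters, and invoke ${\rm Card}\,\beta\N=2^{\cont}$. Your added upper-bound count ${\rm Card}\,G^{\wedge}\le\cont^{\cont}=2^{\cont}$ for $X=\S$ is a harmless (and welcome) completion of the final equality, which the paper leaves implicit.
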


 \begin{proof} Denote by  $\mathfrak F$ the set of all ultrafilters on $\mathbb N$. It is known that
\begin{equation}\label{2sep0}
\rm{Card }\ \mathfrak F =2^{\rm{c}}\, .
\end{equation}
 For a  filter $\mathcal F$ on $\mathbb N$, $(x_n)_{n\in \mathbb N}\in {X}^\mathbb N$ and $x\in X$ we write:
$$
\lim_{n,{\mathcal F}}x_n=x
$$
if for every $W\in \mathcal N(X)$ one has $\{n\in \N: x_n-x\in W\}\in \mathcal F$. Since $X$ is compact Hausdorff, it follows that for every $\mathcal F\in \mathfrak F$ and $(x_n)_{n\in \mathbb N}\in {X}^\mathbb N$ there exists a unique $x\in X$ such that $\lim_{n,{\mathcal F}}x_n=x$.

 For a filter  $\mathcal F\in \mathfrak F$ define the mapping $\chi_{{\mathcal F}}:{ X}^\mathbb N\to X$ by the equality:
$$
\chi_{\mathcal F}({\bf x})=\lim_{n,{\mathcal F}}x_n,\quad \forall {\bf x}=(x_n)_{n\in \mathbb N}\in {X}^\mathbb N\,.
$$
 Then
 \begin{equation}\label{2sep}
 \chi_{\mathcal F}\in CHom (X^{\N},X)\quad \forall \mathcal F\in \mathfrak F\,.
 \end{equation}
 To verify (\ref{2sep}), fix $\mathcal F\in \mathfrak F\,.$ As
$$
\chi_{\mathcal F}({\bf x}+{\bf y})=\lim_{n,{\mathcal F}}(x_n+y_n)= \lim_{n,{\mathcal F}}x_n+\lim_{n,{\mathcal
F}}y_n=\chi_{\mathcal F}({\bf x})+\chi_{\mathcal F}({\bf y}),\quad \forall {\bf x},{\bf y}\in {X}^\mathbb N\,,
$$
we conclude that $\chi_{\mathcal F}\in Hom (X^{\N},X)$. To see that $\chi$ is continuous on  $(X^{\mathbb N},\mathfrak{u})$, fix a
closed $W\in \mathcal N(X)$. Since $W$ is closed, for ${\bf x}=(x_n)_{n\in \mathbb N}\in {W}^\mathbb N$ we shall have
$$
\chi_{\mathcal F}({\bf x})=\lim_{n,{\mathcal F}}x_n\in W\,.
$$
Consequently, $\chi_{\mathcal F}(W^{\mathbb N})\subset W$. From this relation, as $W^{\mathbb N}\in \mathcal N(X^{\mathbb N},\mathfrak{u})$,
we get that $\chi_{\mathcal F}$ is continuous on  $(X^{\mathbb N},\mathfrak{u})$ and (\ref{2sep}) is proved.
\par
We have also:
\begin{equation}\label{2sep2}
 \mathcal F_1\in \mathfrak F,\,\mathcal F_2\in
\mathfrak F,\,\,\mathcal F_1\ne \mathcal F_2\,
\Longrightarrow\,\chi_{\mathcal F_1} \ne \chi_{\mathcal F_2}
\end{equation}
 In fact, as $\mathcal F_1$ and $\mathcal F_2$ are {\it distinct} ultrafilters, there is $F\in  \mathcal F_1$ such that $F\not\in  \mathcal F_2$. Let  ${\bf x}=(x_n)_{n\in \mathbb N}\in
{X}^\mathbb N\,$ be defined by conditions: $x_n=0$ if $n\in F$ and $x_n=a\ne 0$ if $n\in \mathbb N\setminus F$. Then $\chi_{\mathcal
F_1}({\bf x})=0$ and $\chi_{\mathcal F_2}({\bf x})=a$. Therefore, $\chi_{\mathcal F_1} \ne \chi_{\mathcal F_2}$ and (\ref{2sep2}) is
proved.
 \par
Clearly (\ref{2sep0}),(\ref{2sep})  and (\ref{2sep2}) imply that ${\rm Card}\ CHom (X^{\N},X) \ge 2^{\rm{c}}$.
\end{proof}

%%%%%%%%%%%%%%%%%%%%%%%%%%%%%%%%%%
\section{Open questions}

%%%%%%%%%%%
It follows from \cite[Proposition 5.4]{CMPT} that every {\bf non-meager}  $(G,\mu)\in \rm{LCS}$ is a Mackey group in $\rm{LQC}$.
%The next conjecture should be compared to Corollary \ref{23seq}.

\begin{conj}\label{upki}{\em Every metrizable  $(G,\mu)\in \rm{LCS}$ is a Mackey group in $\rm{LQC}$. }
\end{conj}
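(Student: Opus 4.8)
The plan is to reduce the statement to a boundedness property of polars and then to a Baire-category argument. By Theorem \ref{mtv} the metrizable $(G,\mu)\in \rm{LCS}$ is already a Mackey group in the smaller class $\rm{LCS}$, so the only thing left is to show that an arbitrary compatible \emph{locally quasi-convex} topology $\nu$ on $G$ with $(G,\nu)\in \rm{LQC}$ satisfies $\nu\le\mu$. Since $(G,\nu)$ is locally quasi-convex, it suffices to prove that every quasi-convex $V\in\mathcal N(G,\nu)$ is a $\mu$-neighbourhood of $0$. By quasi-convexity $V$ equals its bipolar, $V=V^{\triangleright\triangleleft}:=\{x\in G:\chi(x)\in\S_+\ \forall\,\chi\in V^{\triangleright}\}$, and by $(\ref{smith})$ we may write $V^{\triangleright}=\{\rho_l:l\in B\}$ with $B=\{l\in (G,\mu)':\ l(V)\subseteq \Z+[-\tfrac14,\tfrac14]\}\subseteq (G,\mu)'$; consequently
\[
V\ \supseteq\ \{x\in G:\ |l(x)|\le\tfrac14\ \ \forall\,l\in B\}.
\]
Hence it is enough to show that $B$ is $\mu$-equicontinuous, i.e.\ that $B$ is contained in the polar of some $U\in\mathcal N(G,\mu)$.

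I would split this into two parts. \textbf{(a)} The set $V$ is \emph{absorbing} in the vector space $G$; equivalently (by the displayed description of $V$) $B$ is pointwise bounded on $G$, since then $tx\in V$ for every $t<1/(4\sup_{l\in B}|l(x)|)$. \textbf{(b)} Granted that $V$ is absorbing, one has $G=\bigcup_{n\in\N}nV$, and $V=V^{\triangleright\triangleleft}=\bigcap_{l\in B}l^{-1}\bigl(\Z+[-\tfrac14,\tfrac14]\bigr)$ is $\mu$-closed, being an intersection of preimages of closed sets under the $\mu$-continuous linear forms $l$. Passing to the Fréchet completion $F=\widehat{(G,\mu)}$ — for which $(G,\mu)'=F'$ and which, being complete metrizable in $\rm{LCS}$, is a Baire space — one transports $V$ to the $\hat\mu$-closed absorbing set $\widehat V:=\{y\in F:\rho_l(y)\in\S_+\ \forall l\in B\}$, applies Baire's theorem to $F=\bigcup_n n\widehat V$ to get that some $n\widehat V$, hence $\widehat V$, has nonempty interior, and concludes by translation that $\widehat V$ is a $\hat\mu$-neighbourhood of $0$; restricting back, $V=\widehat V\cap G$ is a $\mu$-neighbourhood of $0$. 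When $(G,\mu)$ itself is non-meager, part (b) is exactly \cite[Proposition 5.4]{CMPT}, and no completion is needed.

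The main obstacle is part \textbf{(a)}: proving that the ``logarithm'' $B$ of the polar $V^{\triangleright}$ is pointwise bounded on $G$. This does \emph{not} follow from the (always valid) $\sigma\bigl((G,\mu)^{\wedge},G\bigr)$-compactness of $V^{\triangleright}$, because $\S$ is compact and $\{\rho_l(x):l\in B\}$ is a bounded subset of $\S$ for trivial reasons; what is needed is that $l(V)$ cannot spread over several of the branches $\Z+[-\tfrac14,\tfrac14]$. The natural attack is to show first that every compatible locally quasi-convex topology $\nu$ on the connected — hence locally generated — group $(G,\mu)$ is again locally generated: if $H:=\langle V\rangle\subsetneq G$ were a proper open subgroup, $G/H$ would be a nontrivial discrete (hence MAP) quotient and would furnish some $\rho_l\in(G,\nu)^{\wedge}=(G,\mu)^{\wedge}$ with $0\ne l$ and $l(H)\subseteq\Z$, and one would seek a contradiction with the $\nu$-continuity of $\rho_l$. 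Granting local generation, a gliding-hump construction in the spirit of the proof of Theorem \ref{dikconj}(b) — expanding a prescribed $x\in G$ as a sum of elements of an arbitrarily small member of $\mathcal N(G,\nu)$ and reassembling these blocks into a $\nu$-null sequence — should force $l(V)\subseteq[-\tfrac14,\tfrac14]$ for every $l\in B$, which is (a) and finishes the proof. The genuinely delicate point — and the reason the statement is recorded here only as a conjecture — is that for a non-barrelled (e.g.\ incomplete normed) metrizable $(G,\mu)$ all the Baire-category shortcuts in part (b) fail, so one must establish the equicontinuity of $V^{\triangleright}$ by purely ``group-theoretic'' means, and it is not clear that the gliding-hump argument alone suffices there.
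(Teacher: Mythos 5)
There is nothing to compare against here: the statement you are proving is recorded in the paper as Conjecture \ref{upki}, i.e.\ as an \emph{open problem}. The paper proves only the non-meager case (quoting \cite[Proposition 5.4]{CMPT}) and explicitly admits not even knowing whether $\R^{(\N)}$ with the topology induced from $\R^{\N}$ is Mackey in $\rm{LQC}$. So a complete proof is not expected, and indeed your proposal is not one; to your credit, you say so yourself in the last sentence. What you offer is a reasonable reduction (it suffices to show that the polar $V^{\triangleright}$ of a quasi-convex $\nu$-neighbourhood is $\mu$-equicontinuous) together with a two-step plan, but both steps have genuine gaps.

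Concretely: in part (a), the pointwise boundedness of $B=\{l:\rho_l\in V^{\triangleright}\}$ is exactly the place where the group-theoretic difficulty lives --- a character $\rho_l\in V^{\triangleright}$ only constrains $l(V)$ modulo $\Z$, and the gliding-hump argument you gesture at (modelled on Theorem \ref{dikconj}(b)) is not carried out; it is not clear that reassembling $x$ from small pieces of $V$ forces $l(V)\subseteq[-\tfrac14,\tfrac14]$ rather than merely $l(x)\in\Z+k\cdot[-\tfrac14,\tfrac14]$ with $k$ growing. In part (b), even granting (a), the passage to the completion $F$ does not work as written: pointwise boundedness of $B$ on the dense subspace $G$ does not imply pointwise boundedness on $F$ (that would already be a form of the equicontinuity you are trying to prove), so $\widehat V$ need not be absorbing in $F$; moreover, since quasi-convex sets are not convex, having some $n\widehat V$ with nonempty interior does not by a simple translation argument make $\widehat V$ a neighbourhood of $0$ (one cannot use $V-V\subseteq 2V$). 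These are precisely the obstructions that keep the statement a conjecture, so your write-up should be presented as a strategy and a reduction, not as a proof.
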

%%%%%%%%%%
We do not even  know if $\R^{(\N)}$ with the topology induced from the product space $\R^{\N}$ is a Mackey group in $\rm{LQC}$.

\begin{remark}\label{oques}{\em
We conjecture that Proposition \ref{elenaN1}$(c)$ remains true for a (not necessarily complete metrizable) topological abelian group.  }
  \end{remark}

 It is clear that if $G$ is a discrete group, then $G$  is a Mackey group in $\rm{MAP}$.

\begin{conj}\label{conj1} If $G\in \rm{MAP}$ is a Mackey group in $\rm{MAP}$, then $G$ is a discrete group.
\end{conj}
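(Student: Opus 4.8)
The statement to be proved is Conjecture \ref{conj1}. The plan is to argue by contraposition: assume $(G,\mu)\in\mathrm{MAP}$ is non-discrete, and produce a strictly finer group topology $\nu>\mu$ on $G$ with $(G,\nu)^\wedge=(G,\mu)^\wedge$; since the common dual separates points, $(G,\nu)$ automatically lies in $\mathrm{MAP}$, so this would contradict $(G,\mu)$ being a Mackey group in $\mathrm{MAP}$. A preliminary reduction makes the target cleaner. Put $H:=(G,\mu)^\wedge$ and let $\mathcal P$ be the set of group topologies $\nu\ge\mu$ on $G$ with $(G,\nu)^\wedge=H$, ordered by refinement. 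Then $\mathcal P$ is inductive: the supremum $\nu_\infty$ of a chain $C\subseteq\mathcal P$ has neighbourhood filter at $e$ equal to $\bigcup_{\nu\in C}\mathcal N(G,\nu)$ (a basic $\nu_\infty$-neighbourhood of $e$ is a finite intersection of neighbourhoods coming from members of $C$, and those members, being pairwise comparable, have a finest one among them), so, by the standard fact that a character $\chi$ is continuous precisely when $\chi^{-1}(\mathbb S_+)\in\mathcal N(G)$, every $\nu_\infty$-continuous character is continuous for some $\nu\in C$ and hence lies in $H$; thus $(G,\nu_\infty)^\wedge=H$ and $\nu_\infty\in\mathcal P$. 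By Zorn's lemma $\mathcal P$ has a maximal element $\mu_0$, which is a compatible $\mathrm{MAP}$ topology for $(G,\mu)$, so if $(G,\mu)$ is Mackey in $\mathrm{MAP}$ then $\mu_0\le\mu$, i.e.\ $\mu=\mu_0$. Hence it suffices to prove: \emph{a non-discrete group topology $\mu$ with $(G,\mu)\in\mathrm{MAP}$ is never maximal among the group topologies on $G$ with the same continuous characters.}

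For the construction of the required refinement I would split into cases according to the local structure of $(G,\mu)$. If $(G,\mu)$ carries a nontrivial convergent sequence $a_n\to_\mu e$, I would extract from it a subsequence $(b_k)$ which is a $T$-sequence (in the sense of Protasov--Zelenyuk) but not $\mu$-null, let $\tau_{(b_k)}$ be the finest group topology in which $b_k\to 0$, and take $\nu:=\mu\vee\tau_{(b_k)}$; this $\nu$ is strictly finer than $\mu$ on $\langle b_k\rangle$, and the point to verify is that it does not enlarge the dual, i.e.\ that a $\nu$-continuous character is $\mu$-continuous on $\langle b_k\rangle$, which should follow from a direct analysis of $\tau_{(b_k)}$ together with the characters annihilating $\langle b_k\rangle$. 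If instead $(G,\mu)$ is precompact, so $\mu=\mu^+=\sigma(G,H)$, I would embed $G$ densely into the compact group $K:=\mathrm{Hom}(H,\mathbb S)$ (with $H$ discrete) and look, in the spirit of Varopoulos \cite{V}, for a finer locally precompact group topology on $G$ between $\sigma(G,H)$ and the topology induced on $G$ by a locally compact refinement of $K$; the heuristic is that a non-discrete dense subgroup of a compact group should not be ``topologically rigid''. The general case would be assembled from the precompact case applied to $(G,\mu^+)$ together with a transfer argument back to $\mu$.

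The main obstacle is exactly the control of the dual under refinement: enlarging the topology of $G$ generically enlarges $G^\wedge$, and there is no soft reason why a refinement leaving $G^\wedge$ fixed must exist — this is precisely the content of the conjecture. In particular, the ``convergent sequence'' case cannot be the whole story, because by Remark \ref{5oct} there are non-discrete (even connected, pseudocompact) $\mathrm{MAP}$ groups with no nontrivial convergent sequences, so such groups require a genuinely different refinement, perhaps obtained from a proper dense subgroup, from a quotient by a non-open subgroup, or by a transfinite iteration of small refinements whose limit must then be shown not to create new characters. Once a candidate $\nu$ is in hand, the only remaining check is that it is \emph{strictly} finer than $\mu$, which by the Zorn reduction above is the sole obstruction to the conjecture. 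I expect producing the refinement uniformly — covering the pseudocompact, convergent-sequence-free case — to be the crux.
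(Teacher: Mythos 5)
This statement is labelled a \emph{conjecture} in the paper and appears in the final section of open problems; the authors give no proof of it, so there is nothing to compare your argument against except the statement itself. What you have written is not a proof but a programme, and you say so yourself: your closing paragraph concedes that ``there is no soft reason why a refinement leaving $G^\wedge$ fixed must exist --- this is precisely the content of the conjecture.'' The Zorn reduction at the start is correct (the supremum of a chain of group topologies with dual $H$ again has dual $H$, by the fact that a character $\chi$ is continuous iff $\chi^{-1}(\S_+)$ is a neighbourhood of $e$), but it only restates the problem: it shows that a Mackey topology in $\rm{MAP}$ is maximal among compatible group topologies, and the conjecture is exactly that no non-discrete topology can be maximal in this sense. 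Nothing in the reduction produces the required strictly finer compatible topology.

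The two constructions you then sketch both have concrete defects. In the convergent-sequence case you propose to extract from a nontrivial $\mu$-null sequence $(a_n)$ a subsequence $(b_k)$ that is ``a $T$-sequence but not $\mu$-null''; this is impossible, since every subsequence of a $\mu$-null sequence is $\mu$-null. If instead you take $(b_k)$ to be $\mu$-null, then $\mu\le\tau_{(b_k)}$ automatically and $\nu=\tau_{(b_k)}$, but the dual of the finest group topology in which a sequence converges to $e$ is in general vastly larger than $(G,\mu)^\wedge$ (already for $G=\Z$), so the compatibility you need ``should follow from a direct analysis'' is precisely the unproved crux, and is false for generic choices of $(b_k)$. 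In the precompact case, the heuristic that a dense non-discrete subgroup of a compact group is not ``topologically rigid'' is not an argument, and Varopoulos's classification of locally precompact compatible topologies does not by itself yield a strictly finer one with the same dual. Finally, as you note, Remark \ref{5oct} shows there are non-discrete pseudocompact $\rm{MAP}$ groups with no nontrivial convergent sequences, so your case split does not cover all groups even in outline. The conjecture therefore remains exactly as open after your proposal as before it.
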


 The conjecture \ref{conj1} in terms of the Mackey topology can be reformulated as follows.

 \begin{conj}\label{conj2}  If for a precompact topological group $(G,\nu)$ there exists  the $\rm{MAP}$-Mackey topology in $G$ associated with $\nu$, then $\nu$
 is the finest precompact group topology in $G$.
\end{conj}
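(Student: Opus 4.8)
The plan is to deduce the statement (Conjecture \ref{conj2}) from Conjecture \ref{conj1}, since the text asserts these are reformulations of one another, and then to indicate how I would attack Conjecture \ref{conj1} itself. First I would recast everything in terms of separating subgroups of the full character group $Hom(G,\S)$. For an abelian group $G$ the characters into $\S$ always separate points, and the precompact Hausdorff group topologies on $G$ are exactly the topologies $\sigma(G,H)$ with $H$ a separating subgroup of $Hom(G,\S)$; moreover $(G,\sigma(G,H))^\wedge=H$. Hence two precompact topologies are compatible precisely when they coincide, the finest precompact topology on $G$ is $\sigma(G,Hom(G,\S))$, and the conclusion ``$\nu$ is the finest precompact group topology'' is equivalent to ``$H=Hom(G,\S)$'', where $H:=(G,\nu)^\wedge$. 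Writing $\nu=\sigma(G,H)$, the hypothesis that the $\rm{MAP}$-Mackey topology $\mu$ associated with $\nu$ exists unwinds, via Proposition \ref{coro} and Definition \ref{def1}, to the following: $\mu$ is a $\rm{MAP}$ group topology with $(G,\mu)^\wedge=H$ that is the \emph{finest} $\rm{MAP}$ topology with dual $H$.

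Second, I would carry out the reduction to Conjecture \ref{conj1}. The implication I need is: if $(G,\mu)$ is a Mackey group in $\rm{MAP}$, then $\mu$ is discrete. Granting this, the discrete topology has dual exactly $Hom(G,\S)$, so $H=(G,\mu)^\wedge=Hom(G,\S)$, which by the first paragraph says $\nu$ is the finest precompact topology, as required. Conversely, feeding $\nu:=\mu^+$ into Conjecture \ref{conj2} recovers the conclusion of Conjecture \ref{conj1}, using that $\mu^+$ is precompact with $(G,\mu^+)^\wedge=(G,\mu)^\wedge$ by Proposition \ref{coro}; this is the equivalence asserted in the text. Thus the whole problem rests on Conjecture \ref{conj1}.

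Third, for Conjecture \ref{conj1} itself I would split according to the size of $H=(G,\mu)^\wedge$. The case $H=Hom(G,\S)$ is immediate: the discrete topology $\delta$ is then a $\rm{MAP}$ topology compatible with $\mu$, and since $\delta\ge\mu$ always holds, the Mackey property forces $\delta\le\mu$, whence $\mu=\delta$ is discrete. The remaining, genuinely hard case is $H\subsetneq Hom(G,\S)$ (equivalently, $\mu$ non-discrete). Here the aim is a contradiction: I would try to produce a group topology $\mu'$ on $G$ with $\mu'>\mu$, still $\rm{MAP}$, and with $(G,\mu')^\wedge=H$ unchanged, thereby violating the maximality of $\mu$ among $\rm{MAP}$ topologies with dual $H$. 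The natural attempt is to exploit a character $\phi\in Hom(G,\S)\setminus H$ together with the non-discreteness of $\mu$, refining $\mu$ ``transversally'' to $H$ — for instance by $T$-sequence refinements or by transporting the $c_0$-type constructions of Sections 3--6 along a suitable homomorphism.

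Finally, the main obstacle is precisely the requirement that the refinement $\mu'$ introduce no new continuous characters. Any naive enlargement — say a supremum $\mu'=\sup(\mu,\rho)$ with $\rho$ a further compatible topology — tends to create characters continuous for neither $\mu$ nor $\rho$, so that $(G,\mu')^\wedge$ jumps strictly above $H$; this is exactly the mechanism by which $\mathcal G$-Mackey topologies fail to exist and is what makes the conjecture delicate. Controlling the dual of the refinement, i.e.\ guaranteeing $(G,\mu')^\wedge=H$ while keeping $\mu'>\mu$ and Hausdorff, is therefore the crux, and I expect it to demand a careful, likely $G$-dependent construction rather than a soft categorical argument.
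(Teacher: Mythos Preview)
The statement you are addressing is a \emph{conjecture}, not a theorem: the paper offers no proof of it. What the paper does claim is that Conjecture~\ref{conj2} is a reformulation of Conjecture~\ref{conj1}, and your first two paragraphs correctly supply the details of that equivalence (via the Comfort--Ross correspondence between precompact topologies and separating subgroups of $Hom(G,\S)$, together with Proposition~\ref{coro}). On that part your argument is fine and matches the paper's intent.

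The gap is in your third and fourth paragraphs, where you attempt to prove Conjecture~\ref{conj1} itself. You correctly dispose of the trivial case $H=Hom(G,\S)$, but in the non-discrete case your proposed strategy --- refining $\mu$ to some $\mu'>\mu$ in $\rm{MAP}$ while keeping $(G,\mu')^\wedge=H$ --- is exactly the open problem, and you acknowledge as much in your final paragraph. Neither ``$T$-sequence refinements'' nor ``transporting the $c_0$-type constructions'' is made concrete, and there is no mechanism offered to control the dual of the refinement. So what you have is a correct proof of the equivalence the paper asserts, followed by an honest outline of why the underlying conjecture is hard; you have not proved the statement, and neither has the paper.
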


\begin{remark}{\em
In \cite{BTM} it is shown that a non-complete precompact metrizable $(G,\mu)$  can be a  Mackey group in $\rm{LQC}$. It is also known that every locally pseudocompact
$(G,\mu)$ is a Mackey group in $\rm{LQC}$ (cf. \cite{DLMT}). An internal description of groups $(G,\tau)\in$$\rm{LQC}$ which are Mackey in $\rm{LQC}$ is unknown.
}
\end{remark}

%%%%%%%%%%%%%%%%%%%%%%%%%%%%%%%%%%%

\end{document}